\newtheorem{theorem}{Theorem}[section]
\newtheorem{claim}{}[theorem]
\newtheorem{lemma}[theorem]{Lemma}
\theoremstyle{definition}
\newtheorem{definition}[theorem]{Definition}
\DeclareMathOperator{\ex}{ex}
\tikzset{every node/.style={inner sep=1.5pt,circle,draw,fill}}
\tikzstyle{every path}=[thick]
\newdimen\R
\newcommand{\cG}{\mathcal G}
\newcommand{\cH}{\mathcal H}
\newcommand{\cB}{\mathcal B}
\newcommand{\cP}{\mathcal P}
\newcommand{\cF}{\mathcal F}
\title{Planar Tur\'an number of the $7$-cycle}
\author[1]{Ruilin Shi}
\author[1]{Zach Walsh}
\author[1]{Xingxing Yu}
\affil[1]{School of Mathematics, Georgia Institute of Technology, USA}
\date{\today}
\begin{document}

\maketitle

\begin{abstract}
The \emph{planar Tur\'an number} $\ex_{\cP}(n,H)$ of a graph $H$ is the maximum number of edges in an $n$-vertex planar graph without $H$ as a subgraph. Let $C_{\ell}$ denote the cycle of length $\ell$. The planar Tur\'an number $\ex_{\cP}(n,C_{\ell})$ behaves differently for $\ell\le 10$ and for $\ell\ge 11$, and it is known when $\ell \in \{3,4,5,6\}$.
We prove that $\ex_{\cP}(n,C_7) \le \frac{18n}{7} - \frac{48}{7}$ for all $n > 38$, and show that equality holds for infinitely many integers $n$.
\end{abstract}

\section{Introduction} \label{sec:introduction}
The \emph{Tur\'an number} $\ex(n,H)$ of a graph $H$ is the maximum number of edges in an $n$-vertex graph without $H$ as a subgraph.
Tur\'an's theorem \cite{Turan}, a cornerstone of extremal graph theory, states that $\ex(n, K_t) \le (1 - \frac{1}{t-1}){n\choose 2}$ for all integers $t$ and $n$ with $n,t \ge 3$, with equality for balanced complete $(t-1)$-partite graphs.
This has led to a huge amount of related work, including the Erd\H os-Stone theorem \cite{Erdos-Stone}, generalized Tur\'an numbers (see \cite{ALON2016146}), and Tur\'an problems for hypergraphs (see \cite{Keevash}).

Another well-studied variant of Tur\'an's theorem involves placing restrictions on the host graph; for example, forcing the host graph to be a hypercube (see \cite{hypercube}) or an Erd\H os-R\'enyi random graph (see \cite{random}).
In 2015, Dowden considered the variant in which the host graph is planar, and defined the \emph{planar Tur\'an number} $\ex_{\cP}(n,H)$ of a graph $H$ to be the maximum number of edges in an $n$-vertex planar graph without $H$ as a subgraph \cite{Dowden2015ExtremalCP}.
For example, it follows from Euler's formula that $\ex_{\cP}(n, C_3) = 2n - 4$ for all $n \ge 3$, where $C_{\ell}$ denotes the cycle of length $\ell$.
Since cycles are arguably the most natural choice for $H$ in the planar setting, Dowden proved that $\ex_{\cP}(n, C_4) \le \frac{15(n-2)}{7}$ for all $n \ge 4$ and $\ex_{\cP}(n, C_5) \le \frac{12n - 33}{5}$ for all $n \ge 11$, and showed that in both cases equality holds infinitely often \cite{Dowden2015ExtremalCP}.
This initiated a flurry of research on planar Tur\'an numbers.
We direct the reader to the survey paper of Lan, Shi, and Song \cite{Survey} for details regarding recent related work, while we continue to focus on the planar Tur\'an number of $C_{\ell}$.

In \cite{6-cycle}, Ghosh, Gy\H ori, Martin, Paulos, and Xiao took the next step and proved that $\ex_{\cP}(n, C_6) \le \frac{5n}{2} - 7$ with equality holding infinitely often, improving upon a result of Yan, Shi, and Song \cite{Theta-free}.
They also conjectured that $\ex_{\cP}(n, C_{\ell}) \le \frac{3(\ell - 1)}{\ell}n - \frac{6(\ell + 1)}{\ell}$ for all $\ell \ge 7$ and all sufficiently large $n$, as this is the correct bound for $\ell \in \{5,6\}$.
If true, this bound would be tight infinitely often for every $\ell \ge 7$.
However, this conjecture was disproved for all $\ell \ge 11$ by Cranston, Lidick\'y, Liu, and Shantanam \cite{CLLScounterexample}, and later also by Lan and Song \cite{ImprovedLowerBound}, using the fact that planar triangulations with at least $11$ vertices need not be Hamiltonian.
While the correct value of $\ex_{\cP}(n, C_{\ell})$ is rather mysterious for $\ell \ge 11$ (see \cite{CLLScounterexample} and \cite{ImprovedLowerBound}), we prove the conjecture of Ghosh et al. in the case that $\ell = 7$ (writing $e(G)$ for the number of edges of a graph $G$).

\begin{theorem} \label{main}
Let $n$ be an integer with $n > 38$ and let $G$ be an $n$-vertex $C_7$-free planar graph.
Then $e(G) \le \frac{18}{7}n - \frac{48}{7}$, and equality holds infinitely often.
\end{theorem}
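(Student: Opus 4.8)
The plan is to prove the upper bound by a discharging argument on a plane embedding of a minimal counterexample, and to prove tightness by an explicit construction.

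\emph{Reductions.} Suppose the bound fails, and among all counterexamples choose a $C_7$-free planar graph $G$ with $n>38$ vertices and $7e(G)\ge 18n-47$ having $n$ minimum and then $e(G)$ maximum. A vertex of degree at most $2$ can be deleted, since the edge bound decreases by $\tfrac{18}{7}>2$ per vertex and the deletion would contradict minimality; hence $\delta(G)\ge 3$. Cut vertices and $2$-separations, and (with more care) the small $3$-separations that can cut off a dense piece, are handled by applying the bound to the pieces and disposing of the finitely many small pieces by hand; this is where the hypothesis $n>38$ is used, and afterwards we may assume $G$ is $2$-connected, so in a plane embedding every face is bounded by a cycle.

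\emph{Discharging.} Give each vertex $v$ the weight $\mu(v)=6-\tfrac{7}{6}\deg(v)$ and each face $F$ the weight $\mu(F)=24-\tfrac{22}{3}\ell(F)$, where $\ell(F)$ is the length of $F$. Using $\sum_v\deg(v)=\sum_F\ell(F)=2e(G)$ and Euler's formula, one computes $\sum_v\mu(v)+\sum_F\mu(F)=7e(G)-18n+48\ge 1$, so it suffices to redistribute the weight so that every vertex and face ends non-positive. Only triangular faces (weight $+2$) and vertices of degree $3,4,5$ (weights $+\tfrac52,+\tfrac13,+\tfrac16$) start positive, and they must be discharged onto the sinks, namely faces of length at least $4$ and vertices of degree at least $6$. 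The structural input from $C_7$-freeness is: (i) two faces sharing exactly one edge, of lengths $j$ and $k$, satisfy $j+k\ne 9$ — in particular no triangle borders a hexagonal face and no quadrilateral borders a pentagonal face — together with the analogous statements when two faces share more vertices; (ii) a vertex all of whose incident faces are triangles has degree in $\{3,4,5\}$, and more generally no vertex of degree at least $6$ has five consecutive incident triangular faces, since five consecutive triangles at $v$ already contain a $7$-cycle through $v$; and (iii) consequently a maximal region all of whose faces are triangles has bounded size, as a large triangulated patch contains a $C_7$. The rules then push the surplus of each triangular face into the larger faces bounding its patch, and the surplus of each low-degree vertex into its incident large faces or its high-degree neighbours; (i)--(iii) guarantee enough nearby sink capacity.

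\emph{Main obstacle.} The crux is verifying that a single fixed set of rules really leaves every vertex and every face non-positive: quadrilateral, pentagonal, and hexagonal faces interacting with triangle clusters and with vertices of degree $3,4,5,6$ produce a long list of local configurations, and each must be shown either to contain a $C_7$ or to balance. This case analysis, together with the bookkeeping for small separators, is also what forces the exact constant $\tfrac{18}{7}$ and the threshold $n>38$, and it is by far the most laborious part of the proof.

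\emph{Lower bound.} Finally, for infinitely many $n$ — the natural choice is $n\equiv 5\pmod 7$, i.e.\ $n=7m+5$, so that $\tfrac{18}{7}n-\tfrac{48}{7}=18m+6$ is an integer — one exhibits a $C_7$-free planar graph on $n$ vertices with exactly $18m+6$ edges, via an explicit (recursive or periodic) planar structure whose repeating unit contributes $7$ vertices, $18$ edges, and $11$ faces, most of them triangles. The only nontrivial point is $C_7$-freeness of the whole: since the construction is built by gluing small dense pieces along small separators, a would-be $7$-cycle would have to route through two or more pieces, and ruling this out amounts to a short finite computation of the possible lengths of separator-to-separator paths inside a single piece.
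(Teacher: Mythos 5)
Your starting identity is exactly the paper's: assigning $\mu(v)=6-\tfrac{7}{6}\deg(v)$ and $\mu(F)=24-\tfrac{22}{3}\ell(F)$ gives total charge $6n+24f-17e=7e-18n+48$, which is the quantity $24f-17e+6n$ the paper works with. The difference is organizational: you propose to discharge between individual vertices and faces, whereas the paper partitions the edge set into \emph{triangular-blocks} (maximal edge-connected unions of facial triangles), classifies all $C_7$-free blocks (at most six vertices plus two seven-vertex exceptions, via Hamiltonian-path lemmas for near triangulations), and shows each block's aggregate contribution $g(B)=24f(B)-17e(B)+6n(B)$ is nonpositive, with two exceptional flowers handled by grouping. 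Your observations (i)--(iii) are correct in substance (modulo the slip that a degree-$4$ vertex has charge $\tfrac{4}{3}$, not $\tfrac{1}{3}$), and (iii) is essentially the paper's block-classification lemma.

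The genuine gap is that the proof's entire content is deferred. You never state the discharging rules, and you explicitly flag their verification across all local configurations as the "main obstacle" without carrying it out; but that verification \emph{is} the theorem --- it occupies Sections 4 through 7 of the paper and is where $C_7$-freeness, $2$-connectivity, and the absence of sparse sets are actually used. A per-vertex/per-face rule set is likely to be harder to close than the block-level accounting, because the relevant obstruction is not a single face adjacency but the set of path lengths realizable inside a whole triangulated patch between two boundary vertices (the paper's Lemmas \ref{lem:paths} and \ref{lem:Hpath}); without that, you cannot certify that a quadrilateral petal of a patch forces a $C_7$. Two further incomplete points: the reduction step ("handled by applying the bound to the pieces and disposing of the finitely many small pieces by hand") hides a nontrivial induction --- the paper needs a careful bookkeeping of savings under gluing and sparse-set addition precisely because graphs built from many $5$- and $6$-vertex triangulations glued at vertices violate the bound until $n>38$ (e.g.\ $18$ copies of $K_4$ on a common edge), and a naive minimal-counterexample deletion does not dispose of these. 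Finally, the extremal construction is asserted but not exhibited; the paper's construction substitutes $6$-vertex triangulations into the vertices of a girth-$8$ planar graph with degrees $2$ and $3$, and its edge count and $C_7$-freeness are cited, not obvious. As written, the proposal is a plausible plan rather than a proof.
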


The condition that $n > 38$ is necessary, due to the $38$-vertex graph obtained by gluing $18$ copies of $K_4$ together on a common edge.

We next describe, for each $\ell \ge 5$ and for infinitely many $n$, a $C_{\ell}$-free $n$-vertex planar graph with $\frac{3(\ell - 1)}{\ell}n - \frac{6(\ell + 1)}{\ell}$ edges, following Cranston et al. \cite{CLLScounterexample}.
Let  $G$ be an $n$-vertex planar graph with girth $\ell + 1$,  each vertex having degree 2 or $3$, and $\frac{\ell+1}{\ell - 1}(n - 2)$ edges; such a graph exists for infinitely many integers $n$ \cite[Lemma 2]{CLLScounterexample}.
Let $G'$ be obtained from $G$ by \emph{substituting} an $(\ell - 1)$-vertex planar triangulation $B$ for each vertex of $G$, which means that each vertex $v$ of $G$ is replaced by a copy of $B$ and $\deg_G(v)$ vertices of $B$ on a facial triangle are identified with the neighbors of $v$ in $G$.
Then $G'$ is a $C_{\ell}$-free $n'$-vertex graph with $\frac{3(\ell - 1)}{\ell}n' - \frac{6(\ell + 1)}{\ell}$ edges \cite[Corollary 5]{CLLScounterexample}.
In particular, if $\ell = 7$ then $G'$ is a $C_7$-free $n'$-vertex graph with $ \frac{18}{7}n' - \frac{48}{7}$ edges, so the bound in Theorem \ref{main} holds infinitely often.

To prove Theorem \ref{main} it suffices to consider graphs without small separations, and without small sets of vertices with only a few incident edges.
For a graph $G$ and constant $\alpha>0$, a set $S\subseteq V(G)$ is {\it $\alpha$-sparse} if $G$ has at most $\alpha |S|$ edges with at least one incident vertex in $S$.
For each positive integer $n$, we write $\cP_n$ for the class of $n$-vertex, $2$-connected, $C_7$-free plane graphs with no (18/7)-sparse set of order at most 4.
We will obtain Theorem \ref{main} as a consequence of the following result, by generating all $C_7$-free planar graphs from $\cup_{i\ge 1}\cP_i$ through several basic operations.

\begin{theorem} \label{main good}
Let $G \in \cP_n$ with $n \ge 7$.
Then $e(G) \le \frac{18}{7}n - \frac{48}{7}$.
\end{theorem}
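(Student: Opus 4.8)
The standard approach for planar Turán problems is a discharging argument based on Euler's formula, so the plan is to fix a plane embedding of $G \in \cP_n$ and assign to each vertex $v$ an initial charge $\deg(v) - 6$ and to each face $f$ an initial charge $2|f| - 6$, where $|f|$ is the length of the facial walk; by Euler's formula the total charge is $-12$. Rescaling, it is cleaner to aim directly for the bound $e(G) \le \frac{18}{7}n - \frac{48}{7}$, which by $\sum_v \deg(v) = 2e(G)$ is equivalent to $\sum_v (\deg(v) - \frac{36}{7}) \le -\frac{96}{7}$, i.e.\ to a statement purely about the vertex degrees. The key structural input is that $G$ is $C_7$-free, $2$-connected, and has no $(18/7)$-sparse set of order at most $4$; the latter condition is exactly what rules out the local configurations (vertices of degree $\le 2$, adjacent low-degree vertices, small separators bounded by few edges) that would otherwise be obstructions, and it forces in particular $\delta(G) \ge 3$.

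First I would record the consequences of membership in $\cP_n$: minimum degree at least $3$ (a vertex of degree $\le 2$ together with a neighbor is $(18/7)$-sparse), no two adjacent vertices of degree $3$ unless their neighborhoods interact in a restricted way, no separating set of $\le 4$ vertices incident to few edges, and — crucially — control over the possible short cycles through low-degree vertices, since a $C_7$ is forbidden but $C_3,C_4,C_5,C_6$ may be present. Next I would set up the discharging: faces of length $\ge 7$ have positive charge to give away, but since $G$ is $C_7$-free a facial walk of length exactly $7$ need not be a cycle, so one must be careful about how $7$-faces (which are either $7$-cycles, impossible, or degenerate closed walks) are treated; the heart of the matter is that $3$-faces and $4$-faces absorb the negative charge and vertices of degree $3,4,5$ are the charge sinks. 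I would then design discharging rules sending charge from large faces and from high-degree vertices to triangular and quadrilateral faces and to low-degree vertices, and verify that every vertex and every face ends with nonnegative charge except for a bounded deficit that sums to at least $-\frac{96}{7}$ in the rescaled normalization, with the extremal configuration matching the $K_6^-$-type or $K_4$-glued blocks described after Theorem \ref{main}.

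The main obstacle, and where the bulk of the work will lie, is the case analysis for faces and vertices of small size in a graph that may contain many $3$-, $4$-, $5$-, and $6$-cycles: unlike the $C_6$ case, forbidding $C_7$ leaves a rich family of short-cycle structures, and one must show that a vertex $v$ of degree $3$ or $4$ cannot be "surrounded" by too many triangles without creating a $C_7$ — this requires a delicate local analysis of the subgraph induced by $N[v]$ and the faces around $v$, using $2$-connectivity and the sparseness condition to rule out the bad configurations. I expect that a clean proof will need an auxiliary lemma classifying the possible neighborhoods of degree-$3$ and degree-$4$ vertices (how many incident triangles, how the opposite vertices are joined), proved by exhibiting a $C_7$ or an $(18/7)$-sparse $4$-set in each forbidden case; once that classification is in hand, the discharging rules and the final global count should follow by a (lengthy but routine) verification that no face and no vertex finishes with negative charge beyond the allowed total deficit.
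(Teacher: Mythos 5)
Your proposal correctly reduces the claim to $24f - 17e + 6n \le 0$ via Euler's formula and correctly extracts the first consequences of membership in $\cP_n$ (minimum degree $3$, no small sparse or separating sets), but as written it is a plan rather than a proof: the discharging rules are never specified, the classification lemma for low-degree vertices is never stated or proved, and the decisive verification is deferred with ``should follow by a lengthy but routine verification.'' That deferral hides the entire content of the theorem. (A small symptom: under the charge assignment $\deg(v)-6$, $2|f|-6$ that you write down, every face of length at least $4$ already has positive charge and $3$-faces have charge $0$, so the claim that $3$- and $4$-faces ``absorb the negative charge'' while only faces of length $\ge 7$ donate is not consistent with your own normalization; you would need to actually fix the rescaled charges before any rule could be checked.)

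More substantively, I doubt that the local unit you propose --- the closed neighborhood of a single vertex of degree $3$ or $4$, or a single face --- is large enough to carry the argument. The extremal graphs are built by gluing $6$-vertex triangulations along a girth-$8$ skeleton, and the configurations that are tight for the bound are entire $6$- or $7$-vertex unions of facial triangles together with all the faces meeting them. The paper's proof takes exactly such a union (a \emph{triangular-block}) as the unit of accounting, defines its share $g(B)=24f(B)-17e(B)+6n(B)$ of the Euler count, and shows $g(B)\le 0$ block by block (with two exceptional flowers handled by grouping). The step that makes this work is a global path lemma (Lemma \ref{lem:paths}): a near-triangulation with a Hamiltonian $x$-$y$ path between boundary vertices has $x$-$y$ paths of every length from $d(x,y)$ up to $|V|-1$, which forces every face meeting such a block in the wrong way to have length at least $8$. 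That is a statement about paths threading through up to seven vertices and cannot be detected from the first or second neighborhood of a single low-degree vertex, so a discharging scheme whose case analysis is organized around vertex neighborhoods would, at best, end up rediscovering the block decomposition in disguise. To turn your sketch into a proof you would need to (i) commit to explicit charges and rules, (ii) prove an analogue of the path lemma giving lower bounds on face lengths around chains of triangles, and (iii) carry out the case analysis over the (roughly fifteen) possible $C_7$-free triangle agglomerations --- at which point you are essentially writing the paper's proof.
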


Our proof uses the same strategy employed by Ghosh et al. in \cite{6-cycle} to find $\ex_{\cP}(n, C_6)$, as we believe it is the natural approach.
We also use their terminology whenever possible, for consistency.
This terminology and the proof strategy are both described in Section \ref{sec:proof strategy}.
We also enhance their strategy with a result (Lemma \ref{lem:paths}) about long paths in near triangulations which we believe will be useful for other planar Tur\'an problems.

For a graph $G$, we use $V(G)$ and $E(G)$ to denote its vertex set and edge set, respectively, and often write $|G|$ for $|V(G)|$ and $e(G)$ for $|E(G)|$. For any $S\subseteq V(G)$, we use $G[S]$ to denote the subgraph of $G$ induced by $S$ and write $G-S$ for $G[V(G)\setminus S]$ (and write $G-s$ instead of $S=\{s\}$). For any subgraph $H$ of $G$, we write $G-H$ for $G-V(H)$. 
For a subgraph $B$ of a graph of $G$, 
a \emph{$B$-path} is a path in $G$ that has both ends in $B$ and is internally disjoint from $B$. A $B$-path of length 1 is also known as a {\it chord} of $B$. 
For any positive integer $k$, we let $[k]=\{1, \ldots, k\}$.   

If $G$ is a plane graph, then $F(G)$ denotes its set of faces. It is well known that if $G$ is a 2-connected plane graph then each member of $F(G)$ is bounded by a cycle. The {\it interior} of a cycle $C$ in a plane graph is defined to be the subgraph of $G$ consisting of all edges and vertices of $G$ contained in the closed disc of the plane bounded by $C$.  
Paths and cycles will be represented as a sequence of vertices such that consecutive vertices in the sequence are adjacent. 
For instance, $x_1x_2x_3\ldots x_k$ is a path of length $k-1$, and $x_1x_2x_3\ldots x_kx_1$ is a cycle of length $k$.
 For any distinct vertices $x,y$ in a graph $G$, we use $d(x,y)$ to denote the distance between $x$ and $y$ in $G$, and if $x,y$ are on a path $P$ then $xPy$ denotes the subpath of $P$ between $x$ and $y$. For any cycle $C$ in a plane graph and any two distinct vertices $x,y$ on $C$, we use $xCy$ to denote the subpath of $C$ from $x$ to $y$ in clockwise order.

%%%%%%%%%%%%%%%%%%%%%%%%% Counting
\section{Proof strategy and triangular-blocks} \label{sec:proof strategy}
We now describe the proof strategy for Theorem \ref{main}, which is based on that of Ghosh et al \cite{6-cycle}.
If $G$ is a connected $n$-vertex plane graph with $e$ edges and $f$ faces, then $e \le \frac{18n}{7} - \frac{48}{7}$ is equivalent via Euler's formula to 
$$24f - 17e + 6n \le 0,$$
by replacing the 48 with $24(n-e+f)$.
We will decompose the graph $G$ into edge-disjoint subgraphs, and show that no subgraph contributes too much towards the left-hand side of the inequality.
Due to the extremal construction, it is natural to decompose $G$ into unions of facial triangles. For facial triangles $F$ and $F'$ of $G$ we say that $F \sim F'$ if there is a sequence $F = F_1, F_2, \dots, F_k = F'$ of facial triangles of $G$ so that for each $i \in [k-1]$, $F_i$ and $F_{i+1}$ share an edge. Clearly $\sim$ is an equivalence relation on the facial triangles of $G$.
This motivates the following key definition \cite{6-cycle}.

\begin{definition}
Let $G$ be a plane graph. %for which the outer face has length at least four.
%For a subgraph $H$ of $G$, define $T(H)$ to be the union of $H$ with all facial triangles of $G$ that contain an edge of $H$. 
For $e\in E(G)$, if $e$ is not contained in any facial triangle, then let $B(e)$ be the 2-vertex subgraph of $G$ induced by $e$; otherwise, let $B(e)$ denote the union of all facial triangles equivalent to some facial triangle containing $e$. 
%$B(e) = F^n(\{e\})$ to be the \textbf{triangular-block} of $G$ containing $e$, where $F^{i+1}(\{e\})=F(F^i(\{e\})$.
%If $B(e)$ consists only of the edge $e$, then it is a \textbf{trivial triangular-block}.
We call $B(e)$ the \emph{triangular-block} containing $e$, and it is \emph{trivial} if $B(e)$ has just two vertices. We use $\cB(G)$ to denote the collection of all triangular-blocks of $G$.
%A \emph{chord} of a triangular-block $B$ of $G$ is an interior chord of the outer cycle of $B$.
\end{definition}

It is clear from the definition that if $B$ is a triangular-block of $G$ and $e_1$ and $e_2$ are distinct edges of $B$, then $B = B(e_1) = B(e_2)$.  
In particular, any two distinct triangular-blocks of $G$ are edge-disjoint.
Also, note that any redrawing of $G$ with the same set of faces has the same triangular-blocks.
However, note that a face of a triangular-block $B$ is not necessarily a face of $G$.
For example, the interior $4$-face of $B_{7,b}$ in Figure \ref{table:7-vertex blocks} need not be a face of $G$.

\begin{definition}
Let $B$ be a triangular-block of a plane graph $G$.
A \emph{hole} of $B$ is a face of $B$ that is not a face of $G$.
\end{definition}

Not every face of a triangular-block can be a hole.
We next make two useful observations about faces of triangular-blocks. 

\begin{lemma} \label{lem:interior path}
Let $G$ be a $2$-connected plane graph and let $B$ be a triangular-block of $G$ whose outer face is a hole.
Let $C$ be the outer cycle of $B$ and let $P$ be a $C$-path in $B$.
Then some edge of $P$ is in two facial triangles of $G$.
\end{lemma}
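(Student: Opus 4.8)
The plan is to argue by contradiction: assume every edge of $P$ lies in exactly one facial triangle of $G$, and derive a contradiction.

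First I would record the facts forced by the definition of a triangular-block. Since $B$ is a union of facial triangles of $G$, every edge of $B$ lies in at least one facial triangle of $G$; and any facial triangle of $G$ meeting $B$ in an edge is $\sim$-equivalent to the triangles of $B$, hence lies in $B$ and therefore inside the closed disc $\Delta$ bounded by the outer cycle $C$. Consequently every facial triangle of $G$ containing an edge of $C$ is drawn inside $\Delta$, so each edge of $C$ lies in exactly one facial triangle of $G$ (the one on its inner side), while the face of $G$ on its outer side is not a triangle. Also, for an edge $e$ of $B$ interior to $\Delta$, the number of facial triangles of $G$ through $e$ equals the number of the two faces of $B$ at $e$ that are facial triangles of $G$ (a non-triangular face of $B$ at $e$ strictly contains the $G$-face on that side of $e$, which is then not a triangle, since a triangular $G$-face through the edge $e\in B$ would itself be a face of $B$). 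Finally, the ends of $P$ are its only vertices on $C$, so no edge of $P$ lies on $C$; hence our assumption says each edge of $P$ is an interior edge of $B$ with a facial triangle of $G$ on exactly one side — its \emph{triangle side} — and a non-facial-triangle face of $B$ (a ``hole'', possibly of size $3$) on the other.

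Next I would cut along $P$: its ends split $\Delta$ into closed discs $\Delta_1,\Delta_2$ with boundary cycles $C_i=P\cup A_i$, where $A_1,A_2$ are the arcs of $C$ between the ends of $P$; put $B_i=B\cap\Delta_i$, a connected plane graph with outer cycle $C_i$. The heart of the matter is to show the triangle sides of all edges of $P$ point the same way, say into $\Delta_2$. I would prove this by a local analysis at an interior vertex $v$ of $P$: if $v$ has no further incident edge of $B$ on a given side of $P$, then the two edges of $P$ at $v$ are flanked on that side by one common face of $B$, which cannot be the triangle side of one edge and the hole side of the other; the remaining vertices (with extra incident edges of $B$ on both sides of $P$) I would dispatch either by a short planarity argument or, more robustly, by walking along the boundary of a hole to produce a strictly smaller instance — a $C$-path in $B$ still with every edge in a single facial triangle of $G$ — and inducting on $e(B)$, the base case being that $B$ has no hole in the interior of $\Delta$, so every interior edge of $B$, hence every edge of $P$, lies in two facial triangles. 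With all triangle sides into $\Delta_2$, every boundary edge of $B_2$ has a facial triangle of $G$ on its $B_2$-side (the edges of $P$ by choice of side, the edges of $A_2$ by the first paragraph), while $B_1$ carries a hole abutting an edge of $P$; I would then finish with a double count, applying $2e(B_i)=\sum_f |f|$ and Euler's formula to each $B_i$ and using that every edge of $B_2$ lies in a triangular interior face of $B_2$ whereas $B_1$ has strictly fewer such incidences (its $P$-edges contribute none), to reach a numerical contradiction.

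The step I expect to be the main obstacle is controlling the holes: a hole of $B$ can enclose an arbitrarily large part of $G$, and a priori $P$ could run along the boundaries of several holes lying on both sides of it. The leverage comes from the rigidity in the first paragraph — no edge of $C$ lies on a hole of $B$ other than the outer face, and no edge lies on two holes of $B$ (else it would be in no facial triangle of $G$) — which is exactly what lets one push all the relevant holes to one side of $P$, or else peel a hole off and induct down to the hole-free case.
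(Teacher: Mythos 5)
Your preliminary observations are sound: every edge of $B$ lies in at least one facial triangle of $G$, all facial triangles of $G$ meeting $B$ lie in $B$ and hence inside the disc bounded by $C$, and therefore each of the two arcs of $C$ determined by the ends of $P$ forces a facial triangle of $B$ into each of $\Delta_1$ and $\Delta_2$. But the endgame has a genuine gap: the contradiction cannot be reached by the proposed face/edge count, because every local fact you feed into it is also satisfied by a configuration in which the facial triangles inside $\Delta_1$ and those inside $\Delta_2$ form two \emph{different} $\sim$-classes glued along $P$. In that configuration no edge of $P$ lies in two facial triangles, every edge is still covered by at least one triangle on the prescribed side, and $B_1$, $B_2$ are perfectly ordinary plane graphs; ``every edge of $B_2$ lies in a triangular interior face'' holds for any near triangulation, and ``the $P$-edges of $B_1$ lie in no triangular interior face of $B_1$'' holds for plenty of plane graphs, so no application of $2e(B_i)=\sum_f|f|$ and Euler's formula can detect anything wrong. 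The one hypothesis that excludes this configuration --- and which your argument never invokes --- is that $B$ is a \emph{single} equivalence class under $\sim$, i.e.\ any two of its facial triangles are joined by a chain of facial triangles in which consecutive members share an edge. Since $P$ together with either arc of $C$ bounds a disc, any such chain from a triangle in $\Delta_1$ to a triangle in $\Delta_2$ must at some step contain two facial triangles of $G$ lying on opposite sides of $P$ and sharing an edge; by planarity that shared edge lies on $P$, which is the conclusion. This three-line connectivity argument is the paper's entire proof.

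Your intermediate claim that the ``triangle sides'' of all edges of $P$ point into the same region is both unnecessary once connectivity is used and not established by your sketch: the local analysis only handles interior vertices of $P$ with no extra incident edge of $B$ on one side, the ``peel off a hole and induct'' step is not specified in a way that visibly produces a smaller valid instance, and the proposed base case is false as stated --- a block with no interior hole can still have an interior edge lying in only one facial triangle of $G$, because an interior face of $B$ may be a non-triangular face of $G$ (as with the interior $4$-face of $B_{7,b}$). I would discard that intermediate goal entirely and argue directly with the equivalence-class definition of a triangular-block.
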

\begin{proof}
Let $P_1$ and $P_2$ be the two subpaths of $C$ for which $P_1 \cup P$ and $P_2 \cup P$ are cycles.
Since the outer face of $B$ is a hole, each facial triangle of $B$ that is a facial triangle of $G$ is in the interior of either $P_1 \cup P$ or $P_2 \cup P$.
If no edge of $P$ is in two facial triangles, then no facial triangle of $G$ in the interior of $P_1 \cup P$ is equivalent under $\sim$ to a facial triangle of $G$ in the interior of $P_2 \cup P$, a contradiction.
\end{proof}

The following is a useful consequence of Lemma \ref{lem:interior path}.

\begin{lemma} \label{lem:common vertex}
Let $B$ be a triangular-block of a $2$-connected plane graph $G$, and let $F_1$ and $F_2$ be facial cycles of $B$ that each have length at least four.
Then $F_1$ and $F_2$ share at most one vertex.
\end{lemma}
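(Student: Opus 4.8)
The plan is to reach a contradiction, under the assumption that $F_1$ and $F_2$ share two vertices, by producing a $\partial F_1$-path inside $B$ that lies entirely on $\partial F_2$ and then applying Lemma~\ref{lem:interior path}. First I would record an elementary fact that gets used twice below: every edge $e$ of a nontrivial triangular-block $B$ lies on a triangular face of $B$. Indeed, $e$ lies in some facial triangle $T$ of $G$ with $T\subseteq B$ by the definition of $B$, and then the open region bounded by $T$ is a face of $B$, since its boundary $T$ lies in $B$ while the region itself contains no vertex or edge of $G$ (as $T$ is facial in $G$), hence none of $B$. Two consequences: (i) no edge of $B$ lies on two faces of $B$ of length at least $4$, so $F_1$ and $F_2$ share no edge; and (ii) if an edge $e\in E(B)$ lies in two facial triangles of $G$, then both faces of $B$ incident with $e$ are triangles.

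Now suppose $F_1$ and $F_2$ share two distinct vertices $u,v$, and re-embed $B$ so that $F_1$ is its outer face; this is harmless, since re-embedding a $2$-connected plane graph preserves its facial cycles, hence also the set of facial triangles of $G$ contained in $B$ and the relation $\sim$. Put $C=\partial F_1$, so $|C|\ge 4$, and let $S=V(\partial F_1)\cap V(\partial F_2)\supseteq\{u,v\}$. I would pick $x,y\in S$ consecutive on the cycle $\partial F_2$ and let $P$ be the arc of $\partial F_2$ from $x$ to $y$ with no internal vertex in $S$. Then $P$ is a path in $B$ with both ends on $C$; its internal vertices lie on $\partial F_2\setminus S$, hence off $C$; and by (i) no edge of $P$ lies on $C$. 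So $P$ is a $C$-path in $B$ (a chord of $C$ when $|P|=1$). Lemma~\ref{lem:interior path} then yields an edge $e$ of $P$ lying in two facial triangles of $G$, so by (ii) both faces of $B$ incident with $e$ are triangles; but $e\in E(\partial F_2)$, so $F_2$ is one of those two faces, and $|F_2|\ge4$ — a contradiction.

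The step needing the most care is the application of Lemma~\ref{lem:interior path}: as stated it requires the outer face of $B$ to be a hole, whereas after re-embedding I only know that the outer face $F_1$ has length at least $4$. This gap is only apparent — the proof of Lemma~\ref{lem:interior path} uses the hole hypothesis solely to ensure that no facial triangle of $G$ lies in the outer region of $B$, and this already follows from $|\partial F_1|\ge4$, since then the outer region of $B$ is the single non-triangular face $F_1$. If one insists on citing Lemma~\ref{lem:interior path} verbatim, one can split into cases: if some $F_i$ is a hole, re-embed so that $F_i$ is the outer face and apply the lemma directly; and if neither $F_1$ nor $F_2$ is a hole, then both are faces of $G$, and the argument above runs unchanged because a face of $G$ contains no vertex or edge of $G$ in its interior. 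Either way the conclusion is the same.
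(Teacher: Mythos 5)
Your proof is correct and follows essentially the same route as the paper's: redraw so that $F_1$ bounds the outer face, extract a $\partial F_1$-path from $\partial F_2$, and apply Lemma~\ref{lem:interior path} to reach a contradiction with $|F_2|\ge 4$. You supply more detail than the paper's three-line argument does — in particular, your observation that the hole hypothesis of Lemma~\ref{lem:interior path} can be replaced by the outer face having length at least four is a legitimate (and needed) patch, since the paper's proof silently applies that lemma without verifying that $F_1$ is a hole.
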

\begin{proof}
By redrawing $G$, we may assume that $F_1$ is the outer cycle of $G$.
If the statement is false, then there are vertices $a$ and $b$ in $V(F_1)\cap V(F_2)$ and an $a$-$b$ path $P$ in the interior of $F_1$ so that $E(P) \subseteq E(F_2)$.
But by Lemma \ref{lem:interior path} this is a contradiction.
\end{proof}

The proof strategy for Theorem \ref{main} is to show that no triangular-block of $G$ contributes too much towards the sum $24f - 17e + 6n$.
We next make several definitions, following Ghosh et al. \cite{6-cycle}, to describe how a triangular-block contributes to $24f - 17e + 6n$.
This is easy for edges: each triangular-block $B$ contributes $e(B)$ towards $e$.
Then $$\sum_{B \in \cB(G)}e(B) = e,$$ since triangular-blocks are pairwise edge-disjoint.
We next describe the contribution of a triangular-block $B$ to $n$.

\begin{definition}
Let $G$ be a plane graph and $v\in V(G)$. 
We write $\cB(G)_v=\{B\in \cB(G): v\in V(B)\}$ and say that $v$ is a \emph{junction vertex} if $|\cB(G)_v| > 1$.
We define $n(v)=1/|\cB(G)_v|$, and $n(B)=\sum_{v\in V(B)}n(v)$.  
\end{definition}

Thus, for an $n$-vertex plane graph $G$,  
$$\sum_{B \in \cB(G)}n(B) =\sum_{B\in \cB(G)}\sum_{v\in V(B)}n(v)=\sum_{v\in V(G)}\sum_{B\in \cB(G)_v}n(v)= n.$$

We next define the contribution of a triangular-block to the number of faces of a plane graph $G$.
To do this we must develop some notation concerning the faces incident with edges of a triangular-block $B$.

\begin{definition}
Let $B$ be a triangular-block of a $2$-connected plane graph $G$.  
A facial cycle $F$ of $G$ that is not a facial cycle of $B$ but shares edges with $B$ is a \emph{petal} of $B$. The petal $F$ is said to be \emph{leaky} if the graph $F \cap B$ is disconnected.
The subgraph of $G$ consisting of $B$ and all petals of $B$ is the \emph{flower} of $B$.
\end{definition}

Our next definition will help deal with a facial cycle that intersects a triangular-block in a path with a least two edges.

\begin{definition}
Let $G$ be a $2$-connected plane graph and let $F$ be a facial cycle of $G$.
A triple $x_1x_2x_3$ of consecutive vertices on $F$ is a \emph{bad cherry} if $x_1x_3 \in E(G)\setminus E(F)$ and $x_1$ and $x_3$ are junction vertices of a triangular-block of $G$ that also contains the triangle $x_1x_2x_3x_1$.
Let $F'$ be the cycle obtained from $F$ in the following manner: for each bad cherry $x_1x_2x_3$ of $F$, replace the path $x_1x_2x_3$ in $F$ with the path $x_1x_3$.
We say that $F'$ is the \emph{refinement} of $F$.
For convenience, let $F'=F$ if $F$ has no bad cherry.
\end{definition}

Note that the refinement $F'$ of a facial cycle $F$ in a plane graph $G$ is a cycle which has length at most $|F|-1$ except when $F'=F$ (i.e., $F$ has no bad cherry). 
Also, it is straightforward to check that if $G$ is $C_7$-free and $|F| \ge 8$, then $|F'| \ge 8$ as well; we freely use this fact.
We can now define the contribution of a triangular-block of a plane graph $G$ to the number of faces of $G$.

\begin{definition}
Let $G$ be a $2$-connected plane graph.
Let $B$ be a triangular-block of $G$ and let $\cP(B)$ be the set of petals of $B$.
Let $F\in \cP(B)$ 
and let $F'$ be the refinement of $F$. Define $f_F(B)=e(F\cap B)/e(F')$ 
if $F \cap B$ is not a bad cherry, and $f_F(B)=1/e(F')$
if $F \cap B$ is a bad cherry.
Finally, define $$f(B) = (\textrm{number of non-hole faces of $B$}) + \sum_{F \in \cP(B)}f_F(B).$$
\end{definition}

It is not hard to check that if $G$ is a $2$-connected plane graph 
then $|F(G)|=\sum_{B \in \cB(G)}f(B)$.

Now that we have defined the contribution of a triangular-block to edges, vertices, and faces, we can define the contribution of a triangular-block to $24f - 17e + 6n$.

\begin{definition}
Let $G$ be a $2$-connected plane graph, and let $B$ be a triangular-block of $G$.
We define 
$$g(B) = 24f(B) - 17e(B) + 6n(B).$$
\end{definition}

We comment that $g(B)$ will be independent of the planar drawing of $G$ we choose as all planar drawings of $G$ we use will have the same collection of faces and facial cycles. 
So when we compute an estimate for $g(B)$ we may work with a particular planar drawing of $G$ that is most convenient and therefore of $B$ as well.
Also note that if $G$ has $n$ vertices, $e$ edges, and $f$ faces, then $\sum_{B\in \cB(G)}g(B) = 24f - 17e + 6n$.
This leads to the following proof strategy for Theorem \ref{main}: show that each triangular-block $B$ of $G$ satisfies $g(B) \le 0$.
However, there are two exceptional cases, shown in Figure \ref{exceptions}, for which $g(B) > 0$.
To deal with these cases we will instead find a partition of $\cB(G)$ so that the sum of $g(B)$ for the triangular-blocks in each part of the partition is at most zero.
Then Theorem \ref{main} will follow from the following lemma, which was also proved in \cite{6-cycle}.

\begin{lemma} \label{lem:partition}
Let $G$ be a $2$-connected $n$-vertex plane graph.
If there is a partition $\cP = \{\cB_1, \cB_2, \dots, \cB_m\}$ of $\cB(G)$ so that $\sum_{B \in \cB_i} g(B) \le 0$ for all $i \in [m]$, then $e(G) \le \frac{18n}{7} - \frac{48}{7}$.
\end{lemma}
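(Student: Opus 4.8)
The plan is to prove Lemma \ref{lem:partition} by a straightforward aggregation of the local contributions, with no case analysis. Recall that the preceding discussion established the three identities $\sum_{B \in \cB(G)} e(B) = e(G)$, $\sum_{B \in \cB(G)} n(B) = n$, and $\sum_{B \in \cB(G)} f(B) = |F(G)|$. Summing the definition $g(B) = 24f(B) - 17e(B) + 6n(B)$ over all triangular-blocks of $G$ therefore gives
$$\sum_{B \in \cB(G)} g(B) = 24|F(G)| - 17e(G) + 6n.$$
On the other hand, the hypothesis supplies a partition $\cP = \{\cB_1, \dots, \cB_m\}$ of $\cB(G)$ with $\sum_{B \in \cB_i} g(B) \le 0$ for every $i \in [m]$, so summing over the parts yields $24|F(G)| - 17e(G) + 6n = \sum_{i=1}^m \sum_{B \in \cB_i} g(B) \le 0$.

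The second step is to translate this into the desired edge bound via Euler's formula. Since $G$ is $2$-connected it is connected, so $n - e(G) + |F(G)| = 2$, that is, $24|F(G)| = 48 - 24n + 24e(G)$. Substituting into the inequality $24|F(G)| - 17e(G) + 6n \le 0$ gives $48 - 24n + 24e(G) - 17e(G) + 6n \le 0$, i.e. $7e(G) \le 18n - 48$, which is precisely $e(G) \le \frac{18n}{7} - \frac{48}{7}$.

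There is essentially no obstacle in this argument: the lemma is pure bookkeeping, and all of the substantive work has already been carried out (or asserted) in setting up the quantities $e(B)$, $n(B)$, $f(B)$ and verifying that they sum correctly to $e(G)$, $n$, and $|F(G)|$ respectively. The only point that deserves a sentence of care is the face-count identity $\sum_{B} f(B) = |F(G)|$, since a single petal can be shared by several triangular-blocks and long petals get refined before being counted; granting that identity (as the excerpt does), the proof consists of exactly the two displayed computations above.
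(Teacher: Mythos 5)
Your proof is correct and follows exactly the paper's argument: sum $g(B)$ over the parts of the partition to get $24|F(G)| - 17e(G) + 6n \le 0$, then convert to the edge bound via Euler's formula. The paper states this in two lines; your version just writes out the substitution explicitly.
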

\begin{proof}
The statement follows from the facts that $\sum_{B \in \cB(G)} g(B) = 24f(G) - 17e(G) + 6n$, and $e(G) \le \frac{18n}{7} - \frac{48}{7}$ if and only if $24f(G) - 17e(G) + 6n \le 0$.
\end{proof}

In order to apply this lemma we must characterize the possible triangular-blocks of a $C_7$-free plane graph and then estimate $g(B)$ for each such triangular-block $B$.
In Section \ref{sec:near triangulations} we prove some properties about paths in triangular-blocks, and in Section \ref{sec:C_7-free blocks} we characterize the possible triangular-blocks of a $C_7$-free plane graph.
Then in Sections \ref{sec:large blocks}-\ref{sec:small blocks} we bound $g(B)$ for each possible $C_7$-free triangular-block $B$.
We prove Theorem \ref{main good} in Section \ref{sec:2-connected case} and then prove Theorem \ref{main} in Section \ref{sec:main proof}.
We finish by discussing some related open problems in Section 
\ref{sec:future work}.

\section{Near Triangulations} \label{sec:near triangulations}

A \emph{near triangulation} is a plane graph in which every face is bounded by a triangle except possibly the outer face. 
Note that any near-triangulation is an iterative union of facial triangles, and could therefore be a triangular-block of a plane graph.
For convenience, we allow any planar embedding of $K_2$ to be a near triangulation with itself as its outer cycle. 
In this section we prove two lemmas that will help us characterize all possible $C_7$-free triangular-blocks of a plane graph, and also help us bound $g(B)$ for a given triangular-block $B$.

 \begin{lemma} \label{lem:paths}
Let $G$ be a near triangulation with outer cycle $C$, and let $x,y\in V(C)$ be distinct. Suppose $G$ contains an $x$-$y$ Hamiltonian path. Then $G$ also contains an $x$-$y$ path of length $\ell$ for all $\ell$ with $d(x,y)\le \ell\le |V(G)|-1$. 
\end{lemma}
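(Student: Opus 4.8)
The plan is to induct on $|V(G)|$, using the fact that every near triangulation has a structured outer face. The base case $|V(G)| \le 2$ is trivial. For the inductive step, fix an $x$-$y$ Hamiltonian path $P$ in $G$; we need an $x$-$y$ path of every length $\ell$ with $d(x,y) \le \ell \le |V(G)|-1$. The length $|V(G)|-1$ is realized by $P$ itself, and a shortest $x$-$y$ path realizes $\ell = d(x,y)$, so the issue is the intermediate values. The natural idea is to find one vertex $z$ that can be ``deleted'' so that the remaining graph is still a near triangulation containing an $x$-$y$ Hamiltonian path, and then invoke induction on $G - z$ to get all lengths from $d(x,y)$ up to $|V(G)|-2$; combined with the path $P$ of length $|V(G)|-1$, this covers everything. (One must also check $d_{G-z}(x,y) = d_G(x,y)$, or at least that $d_G(x,y)$ is still attained, which will hold because we choose $z$ off some fixed shortest path.)

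The key step is choosing $z$. Let $C = v_1 v_2 \dots v_k v_1$ be the outer cycle. The Hamiltonian path $P$ enters and leaves along $C$ in some pattern; a good candidate for $z$ is an endpoint-neighbor, i.e. the vertex $z$ adjacent to $x$ along $P$, provided $z$ lies on the outer cycle and $z \notin \{x,y\}$ and $z$ is not a cut vertex of the relevant subgraph. More robustly, since $G$ is a near triangulation, consider the neighbor $x'$ of $x$ on $P$: if $xx'$ is an outer edge or if $x'$ has a neighbor on $C$ allowing us to ``reroute,'' we can peel off $x$ or $x'$. The cleanest version: pick the vertex $z$ on $P$ adjacent to $x$; if $z$ is on $C$, then $G - x$ is a near triangulation (removing a vertex on the outer cycle of a near triangulation, together with its incident edges, leaves a near triangulation once we take the new outer boundary), it contains the $x'$-to-$y$ Hamiltonian subpath $P - x$, and we apply induction to $G-x$ with endpoints $z$ and $y$ to get all lengths $d_{G-x}(z,y) \le \ell' \le |V(G)|-2$; prepending the edge $xz$ gives $x$-$y$ paths of all lengths $d_{G-x}(z,y)+1 \le \ell \le |V(G)|-1$. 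A short separate argument then covers the few smallest lengths down to $d_G(x,y)$, e.g. by instead choosing $z$ to be an internal vertex of a shortest $x$-$y$ path that is safe to delete, or by a direct ``shortcutting'' argument along $P$.

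The main obstacle is that deleting a vertex of a near triangulation need not leave a near triangulation, and need not leave one with an $x$-$y$ Hamiltonian path: the neighbor of $x$ on $P$ may be an interior vertex, and deleting an interior vertex destroys the near-triangulation property (it creates a non-triangular inner face). So the real work is a careful case analysis showing that one can always find a legal vertex to remove — plausibly an outer-cycle vertex that is consecutive with $x$ or $y$ on $P$, exploiting that $P$ is Hamiltonian (so it uses all vertices, and in particular its second vertex and second-to-last vertex are forced) together with the chord structure of the near triangulation near $x$. An alternative route that sidesteps vertex deletion is to argue directly: take $P$ and a shortest path $Q$, and ``interpolate'' by repeatedly replacing a length-$2$ subpath $abc$ of the current path, where $ac \in E(G)$, with the edge $ac$ — each such swap decreases the length by exactly $1$, so it suffices to show that any $x$-$y$ path longer than $d(x,y)$ has such a shortcutting chord; in a near triangulation this should follow because a longest-possible detour always has a triangle ``filling in'' somewhere. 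I expect the cleanest writeup combines induction with the observation that a near triangulation always has a removable outer vertex adjacent (on $P$) to $x$ or $y$.
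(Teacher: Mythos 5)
Your high-level plan --- induct on $|V(G)|$, delete $x$, apply the induction hypothesis to the Hamiltonian subpath from $x$'s $P$-neighbor to $y$, and prepend an edge --- is exactly the skeleton of the paper's proof. But the two issues you explicitly defer are where all the work is, and neither is resolved by your sketch. First, when $G-x$ is not $2$-connected you cannot induct on $G-x$ at all (it is not a near triangulation), and no choice of a single ``removable outer vertex near $x$ or $y$'' rescues this. The paper instead observes that the existence of an $x$-$y$ Hamiltonian path forces all chords of $C$ at $x$ to land on one of the two arcs $xCy$, $yCx$; it then splits $G$ along the extreme such chord $xz$ into two smaller near triangulations $G_1$ (on $xCz\cup zx$) and $G_2$ (on $zCx\cup xz$), applies induction to \emph{both} pieces, and concatenates paths from the two pieces to cover all lengths. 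Second, your ``short separate argument for the few smallest lengths'' hides a genuine range: if $x_t$ is the $P$-neighbor of $x$, then $d_{G-x}(x_t,y)$ can exceed $d_G(x,y)$ by as much as $\deg(x)-1$, so the uncovered interval is not of bounded size. The paper closes it by noting that (when $G-x$ is $2$-connected) the neighbors $x_1,\dots,x_k$ of $x$ form a consecutive subpath of the outer cycle of $G-x$, and sliding the first edge of a shortest $x$-$y$ path from $xx_s$ to $xx_i$ and detouring along $x_iQx_s$ produces paths of every length between $d_G(x,y)$ and $d_G(x,y)+|s-t|$.

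Your proposed alternative --- interpolate from $P$ down to a shortest path by repeatedly contracting a length-$2$ subpath $abc$ with $ac\in E(G)$ --- is not salvageable as stated: the claim that every $x$-$y$ path longer than $d(x,y)$ admits such a shortcutting chord is false. In the wheel with hub $u$ and rim $xv_1v_2yv_3x$ (a near triangulation), the path $xv_1v_2y$ has length $3>2=d(x,y)$, but neither $xv_2$ nor $v_1y$ is an edge; the length-$2$ path must be found elsewhere (e.g.\ $xuy$), not by shortcutting. So the inductive route with the two repairs above is the one to pursue.
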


\begin{proof}
We apply induction on $n:=|V(G)|\ge 2$. The assertion is easily seen to be true for $n=2$ and $n=3$. So assume $n\ge 4$. 

\medskip

{\it Case} 1. $G-x$ is 2-connected. 

Then $G-x$ is also a near triangulation. Let $D$ denote the outer cycle of $G-x$.  Let $x_1, x_2, \ldots, x_k$ be the neighbors of $x$ and assume that  $x_1, x_2, \ldots, x_k, y$ occur on $D$ clockwise in the order listed.  Then $x_1, x_k\in V(C)$, $x_1Dx_k=x_1x_2\ldots x_k$, and $x_kCx_1=x_kDx_1$. 

Without loss of generality, we may assume that some $x$-$y$ Hamiltonian  path in $G$ uses the edge $xx_t$ for some $t\in [k]$. Then $G-x$ has a Hamiltonian $x_t$-$y$ path. Hence, by induction, $G-x$ contains an $x_t$-$y$ path of any length between $d_{G-x}(x_t,y)$ and $n-2$. So by adding the edge $xx_t$, we see that $G$ has an $x$-$y$ path of any length between $d_{G-x}(x_t,y)+1$ and $n-1$. We are done if $d_{G-x}(x_t,y)\le d_G(x,y)-1$. So assume $d_{G-x}(x_t,y)\ge  d_G(x,y)$.

Let $P$ be a shortest $x$-$y$ path in $G$, i.e., the length of $P$ is $d_G(x,y)$, and let $xx_s\in E(P)$, where $s\in [k]$. Then for all $i\in [k]\setminus \{s\}$, $x_i\notin V(P)$; otherwise, $xx_i\cup x_i P y$ would be an $x$-$y$ path in $G$ shorter than $P$. Let $Q$ denote the subpath of $x_1Dx_k$ between $x_s$ and $x_t$. 
Then $Q\cup P$
is an $x_t$-$y$ path in $G-x$  of length $d_G(x,y)-1+|s-t|$. 
Hence, $d_{G-x}(x_t,y) \le d_G(x,y)-1 + |s-t|$. 
Thus, $G$ has an $x$-$y$ path of any length between $d_{G}(x,y)+|s-t|$ and $n-1$. 

Therefore, it remains to show that $G$ has $x$-$y$ paths of any length between $d_G(x,y)$ and $d_G(x,y)-1+|s-t|$. 
For each $x_i\in V(Q)$ we see that $P_i:=xx_i\cup x_iQx_s\cup (P-x)$ is an $x$-$y$ path in $G$ of length $d_G(x,y)+|i-s|$. Thus, the paths $P_i$, for all $x_i\in V(Q)$, are $x$-$y$ paths of all possible lengths between $d_G(x,y)$ and $d_G(x,y)+|s-t|$.

\medskip
{\it Case} 2. $G-x$ is not 2-connected. 

Then, since $G$ is a near triangulation, $C$ has chords from $x$ to $C-x$. Since $G$ has a Hamiltonian $x$-$y$ path, the chords of $C$ from $x$ must all end on $xCy-\{x,y\}$ or all end on $yCx-\{x,y\}$. By symmetry, we may assume that all chords of $C$ from $x$ end on $xCy$, and let $xz$ be the chord of $C$ with $zCy$ minimal. 

Let $C_1:=xCz\cup zx$ and $C_2:=zCx\cup xz$, which are two cycles in $G$. For $i\in [2]$, let $G_i$ denote the interior of $C_i$. Then each $G_i$ is a near triangulation and $C_i$ is its outer cycle. Observe that any $x$-$y$ Hamiltonian path in $G$ gives rise to an $x$-$z$ Hamiltonian path in $G_1$, as well as a $z$-$y$ Hamiltonian path in $G_2-x$ (and hence a $x$-$y$ Hamiltonian  path in $G_2$ by adding the edge $xz$). Moreover, $d_{G_2}(x,y)=d_G(x,y)$.  Let $n_i:=|V(G_i)|$ for $i\in [2]$.

By induction, $G_1$ has an $x$-$z$ path of length $\ell_1$ for every $\ell_1$ between $d_{G_1}(x,z)=1$ and $n_1-1$, and $G_2$ has an $x$-$y$ path of length $\ell_2$ for every $\ell_2$ between $d_{G_2}(x,y)=d_G(x,y)$ and $n_2-1$. 
It remains to show that $G$ has $x$-$y$ paths of any length between $n_2$ and $n-1$. 

Let $P$ be a Hamiltonian $z$-$y$ path in $G_2-x$. Thus, the length of $P$ is $n_2-2$. Now take an $x$-$z$ path $P_m$ in $G_1$ of length $m$ for each $m\in [n_1-1]$. Then  for $m\in [n_1-1]$, $P\cup P_m$ is an $x$-$y$ path of length $m+n_2-1$. Noting that $n=n_1+n_2-1$, we see that $G$ has an $x$-$y$ path of any length between $n_2$ and $n-1$.
\end{proof}

\begin{figure} 
\begin{center}
\begin{tabular}{ccc}
\begin{tikzpicture}[ultra thick]
     \node[label = above: {$y$}] (1) at (0:\R) {};
     \node (2) at (60:\R) {};
     \node (3) at (120:\R) {};
     \node (4) at (180:\R) {};
     \node[label = left: {$x$}] (5) at (240:\R) {};
      \node (6) at (300:\R) {};
     \draw (1) -- (2);
\draw (2) -- (3);
\draw (3) -- (4);
\draw (4) -- (5);
\draw (5) -- (6);
\draw (6) -- (1);
\draw (3) -- (5);
\draw (3) -- (6);
\draw (3) -- (1);
\end{tikzpicture} 
&
\begin{tikzpicture}[ultra thick]
     \node[label = {}] (1) at (0:\R) {};
     \node[label = right: {$y$}] (2) at (60:\R) {};
     \node (3) at (120:\R) {};
     \node (4) at (180:\R) {};
     \node[label = left: {$x$}] (5) at (240:\R) {};
      \node (6) at (300:\R) {};
     \draw (1) -- (2);
\draw (2) -- (3);
\draw (3) -- (4);
\draw (4) -- (5);
\draw (5) -- (6);
\draw (6) -- (1);
\draw (3) -- (5);
\draw (5) -- (1);
\draw (3) -- (1);
\end{tikzpicture}
&
\begin{tikzpicture}
     \node[label = {}] (1) at (18:\R) {};
     \node (2) at (90:\R) {};
     \node (3) at (162:\R) {};
     \node[label = left: {$x$}] (4) at (234:\R) {};
     \node[label = right: {$y$}] (5) at (306:\R) {};
     \node (6) at (0:0) {};
     \draw (1) -- (2);
\draw (2) -- (3);
\draw (3) -- (4);
\draw (4) -- (5);
\draw (5) -- (1);
\draw (2) -- (4);
\draw (2) -- (6);
\draw (2) -- (5);
\draw (4) -- (6);
\draw (5) -- (6);
\end{tikzpicture}

\\

$B_{6,a}$ & $B_{6,c}$ & $B_{6,d}$
\end{tabular}
\end{center}
\caption{The three exceptional cases for Lemma \ref{lem:Hpath}.}
\label{Hpath exceptions}
\end{figure}
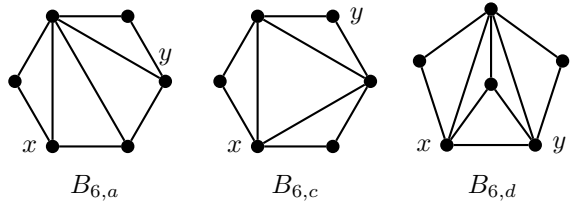

Next, we consider near triangulations with at most six vertices. 
Such a graph is certainly $C_7$-free, and is therefore relevant for the proof of Theorem \ref{main}.

\begin{lemma}\label{lem:Hpath}
Let $G$ be a near triangulation with outer cycle $C$, and let $x,y\in
V(C)$ be distinct. Suppose $|V(G)|\le 6$. Then $G$ contains an $x$-$y$ Hamiltonian
path or one of the following holds:
\begin{itemize}
\item [$(i)$] $xy$ is a chord of $C$.
\item [$(ii)$] $G$ has a path $xzy$ such that both $xz$ and $zy$ are
  chords of $C$, and $G-\{x,y,z\}$ consists of 3 isolated vertices (so
  $G\cong B_{6,a}$ or $G\cong B_{6,d}$ with $x,y$ as in Figure 1). 
\item [$(iii)$] $G\cong B_{6,c}$ with $x$, $y$ as in Figure 1.  
\end{itemize}
\end{lemma}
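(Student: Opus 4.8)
The plan is to prove Lemma~\ref{lem:Hpath} by a careful case analysis on the structure of the near triangulation $G$ with $|V(G)| \le 6$, organized by the size of $V(G)$ and by whether or not $G - x$ (or some related subgraph) is $2$-connected, mirroring the dichotomy used in the proof of Lemma~\ref{lem:paths}. The key point is that a near triangulation $G$ on $n$ vertices with outer cycle $C$ either has no chord from $x$ ending on the ``wrong'' side — in which case one can peel off a smaller near triangulation and induct — or is highly constrained because it has so few vertices. Since $n \le 6$, after reducing to the case where $xy \notin E(C)$ (so conclusion~$(i)$ fails) one only has to handle $n \in \{4,5,6\}$ essentially by hand.

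First I would dispose of the trivial cases $n \le 3$: every near triangulation on at most $3$ vertices is a single edge or a triangle, and then an $x$-$y$ Hamiltonian path always exists or $xy$ is a chord (for $n=3$, $xy\in E(C)$, so there is trivially a Hamiltonian path $x$--(third vertex)--$y$ unless... in fact it always exists). Next, assuming $n \ge 4$ and that $(i)$ fails, I would split on whether $G - x$ is $2$-connected, as in Case~1 of Lemma~\ref{lem:paths}. If $G-x$ is $2$-connected, it is a near triangulation on $n-1 \le 5$ vertices; letting $x_1, \dots, x_k$ be the neighbors of $x$ in clockwise order along the outer cycle $D$ of $G-x$, one of $x_1, x_k$ equals $y$ (since $xy \in E(C)$ would give $(i)$, so actually $y$ is an endpoint of the ``arc'' $x_1 D x_k$... one must be slightly careful here: if $xy\notin E(C)$ then the chord structure forces the neighbors of $x$ and the location of $y$). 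The idea is that a Hamiltonian $x_j$--$y$ path in $G-x$ for the appropriate $j$, extended by the edge $xx_j$, yields the desired Hamiltonian path; a small sub-case analysis on how many vertices $G-x$ has, and whether $G-x$ itself falls into one of the exceptional configurations, finishes this branch.

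The main obstacle — and where most of the work lies — is Case~2, where $G - x$ is not $2$-connected, so $C$ has a chord $xz$ with $x$ as an endpoint. Following Lemma~\ref{lem:paths}, this chord splits $G$ into two near triangulations $G_1$ (containing the $x$--$z$ arc of $C$) and $G_2$ (containing the $z$--$y$ arc), glued along the edge $xz$; WLOG all chords at $x$ point ``toward $y$''. The subtlety is that a Hamiltonian $x$--$y$ path of $G$ need not respect this decomposition in general, so one must argue directly: either $G_1$ is trivial (just the edge $xz$ on $C$, i.e.\ $z$ is the neighbor of $x$ on $C$ in that direction) and we essentially recurse into $G_2$, or $G_1$ has an interior vertex, forcing $|V(G_2)|$ to be small ($\le 4$ or $5$), and then the exceptional graphs $B_{6,a}, B_{6,c}, B_{6,d}$ emerge precisely as the configurations where the pieces cannot be stitched into a spanning path. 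I would enumerate, for $n=5$ and $n=6$, all near triangulations up to the relevant symmetry together with the placement of $\{x,y\}$ on $C$, check which admit an $x$--$y$ Hamiltonian path, and verify that the exceptions are exactly those listed in $(ii)$ and $(iii)$ — in particular that $B_{6,a}$ and $B_{6,d}$ are the only ones with a ``double chord'' path $xzy$ whose removal leaves three isolated vertices, and $B_{6,c}$ is the sole remaining sporadic obstruction. The bookkeeping of which $6$-vertex near triangulations arise is the part most prone to omissions, so I would cross-check it against the block catalogue in Figure~\ref{table:7-vertex blocks} referenced later (restricted to $\le 6$ vertices).
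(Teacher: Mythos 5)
Your overall strategy --- induct on $|V(G)|$, dispose of $n\le 3$, reduce to the case that $xy$ is not a chord, and then analyse the chord structure at $x$ via the dichotomy of Lemma \ref{lem:paths} --- is the same skeleton the paper uses (the paper runs it as a minimal-counterexample argument and organizes the decomposition around the block $Y$ of $G-x$ containing $y$, using that $y$ cannot be a cut vertex of $G-x$ when $xy$ is not a chord; this unifies your Cases 1 and 2 and lets the inductive hypothesis, applied to $X$ and $Y$, pin down the exceptional structure without a full enumeration). So the approach is viable. However, as written the proposal has a genuine gap: the entire content of the lemma is the exact list of exceptions $(ii)$ and $(iii)$, and you defer its verification to an unexecuted enumeration of all near triangulations on $5$ and $6$ vertices with all placements of $x,y$. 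Nothing in the sketch actually produces $B_{6,a}$, $B_{6,c}$, $B_{6,d}$ or certifies that the list is complete, and the cross-check you propose points at the wrong catalogue (the $\le 6$-vertex blocks are in Figure \ref{table:6-vertex blocks}, and chordless $6$-vertex near triangulations do not appear in any figure, so the enumeration cannot simply be read off).

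Two of the concrete structural claims you do make are also incorrect and would need repair before the plan can be executed. In Case 1 you assert that one of $x_1,x_k$ equals $y$; this is false in general (e.g.\ in $B_{6,c}$ with the roles of $x$ and $y$ exchanged, $G-x$ is $2$-connected and $y$ is not a neighbour of $x$), and the real failure mode in Case 1 is that every neighbour $x_j\ne y$ of $x$ is joined to $y$ by a chord of the outer cycle of $G-x$ --- this is exactly how exception $(iii)$ arises, so it cannot be waved away. In Case 2 your dichotomy for $G_1$ (``trivial, just the edge $xz$'' versus ``has an interior vertex'') is not exhaustive: if $xz$ is a chord then $G_1$ has at least three vertices, and it may well have no interior vertex (e.g.\ a single triangle), which is in fact the generic situation. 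The correct observation, which your sketch is missing, is that any Hamiltonian $x$-$y$ path must consist of an $x$-$z$ Hamiltonian path of $G_1$ followed by a $z$-$y$ Hamiltonian path of $G_2-x$; the first piece always exists by induction (since $xz$ lies on the outer cycle of $G_1$ and $|V(G_1)|\le 5$), so the whole question reduces to the second piece, and the exceptions must be traced back from there. With these repairs and the enumeration actually carried out, your plan would give a correct proof, but as it stands the decisive steps are absent.
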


\begin{proof} Suppose for a contradiction that the assertion is false,
  and let $G$ be a counterexample with $|V(G)|$ minimum. Then $|V(G)|\ge
 4$, as otherwise $G$ contains an $x$-$y$ Hamiltonian path.
Moreover,  $xy$ is not a chord of $C$; as otherwise we
have $(i)$.

Observe that if $G$ contains a chord from $x$ to $xCy-x$ and a 
chord from $x$ to $yCx-x$, then since $|V(G)|\le 6$ (and because of
planarity), $G$ has no chord from $y$ to $xCy-y$ or $yCx-y$. Hence, we
may assume by symmetry (between $x$ and $y$ and between $xCy$ and
$yCx$) that $G$ has no chord from $x$ to $yCx-x$.

Since $xy$ is not a chord of $C$ and $G$ is a near triangulation, $y$
is not a cut vertex of the connected graph $G-x$. So let $Y$ denote the unique block of $G-x$ containing
$y$. Let $x_1,x_2\in V(Y)\cap V(C-x)$ such that $x,x_1,y,x_2$ occur in
$C$ in clockwise order, and $x_1Cx_2=Y\cap C$.  Since $G$ has no chord from $x$ to $yCx-x$, we see that
$x_2x\in E(C)$. Let $X:=G-(Y-x_1)$. Note that if $|V(X)|\ge 3$ then
$X$ is a near triangulation with outer cycle $C_X:=xCx_1\cup x_1x$, and
that if $|V(Y)|\ge 3$ then $Y$ is a near triangulation and its outer
cycle, denoted $C_Y$,  satisfies $x_1Cx_2=x_1C_Yx_2$.

We may assume $y\ne x_1$. For otherwise, $V(X)=\{x,y\}$ as $xy$
is not a chord of $C$. But then $x_1$ and $x_2$ are symmetric, and we
can relabel $x_1,x_2$ and flip the embedding of $G$. 

Now suppose $|V(G)|\le 5$.  Then $|V(X)|\le 4$ and
$|V(Y)|\le 4$. Hence, $X$ has an $x$-$x_1$ Hamiltonian path (since
$xx_1$ is not a chord of $C_X$) and, thus,
$Y$ has no $x_1$-$y$ Hamiltonian path. 
Hence, $Y$ consists of a
4-cycle $x_1aybx_1$ and chord $x_1y$, and we have $V(X) = \{x,x_1\}$. Choose the labeling so that
$x_1ay\subseteq x_1Cx_2$. Since $x_2x\in E(C)$ and $G$ is a near
triangulation, $xb\in E(G)$. But then $xbx_1ay$ is a $x$-$y$ Hamiltonian
path in $G$, a contradiction.

Thus, $|V(G)|=6$. Then $|V(X)|\le 5$. Hence, by the minimality of $G$,
$X$ has an $x$-$x_1$ Hamiltonian path (since
$xx_1$ is not a chord of $C_X$). Therefore, $Y$ has no $x_1$-$y$
Hamiltonian path. 
Thus $|V(X)|=3$ and $|V(Y)|=4$, or $|V(X)|=2$ and
$|V(Y)|=5$. Moreover, by the minimality of $G$,   $x_1y$ is a
chord of $C_Y$. If $|V(X)|=3$ and $|V(Y)|=4$ then $xx_1,x_1y$ are both chords of $C$, and $(ii)$
holds.

So $|V(Y)|=5$ and $|V(X)|=2$.  Note that, for any $w\in V(yC_Yx_1)\setminus
\{y,x_1\}$ with $w\in N(x)$, $Y$ has no $w$-$y$ Hamiltonian path, as
such a path and $xw$ would give an $x$-$y$ Hamiltonian path in $G$.
Thus, since $x_1y$ is a chord of $C_Y$, we have  $|V(C_Y)|=5$ and
$yC_Yx_1=yvx_2x_1$ is a path of length 3. 
Now $vx_1\notin E(G)$, as otherwise $x x_2 v x_1 \cup x_1 C_Y y$ is an $x$-$y$ Hamiltonian path in $G$, a contradiction.
Hence, $x_2y\in E(G)$, which shows that $(iii)$ holds. 
\end{proof}

%%%%%%%%%%%%%%%%%%%%%%%%%%%%%%%%%%%%%%%%%%%%%

\section{$C_7$-Free Triangular-Blocks} \label{sec:C_7-free blocks}

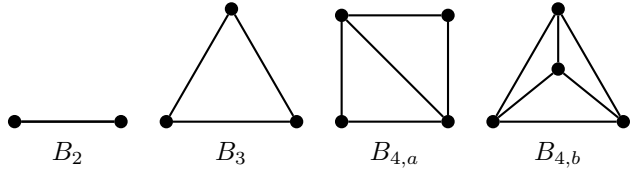
\begin{figure} 
\centering
\begin{tabular}{c c c c}
\begin{tikzpicture}
     \node[label = {}] (1) at (45:\R) {};
     \node (2) at (135:\R) {};
     \draw (1) -- (2);
\draw (2) -- (1);
\end{tikzpicture} 
&
\begin{tikzpicture}
     \node[label = {}] (1) at (90:\R) {};
     \node (2) at (210:\R) {};
     \node (3) at (330:\R) {};
     \draw (1) -- (2);
\draw (2) -- (3);
\draw (3) -- (1);
\end{tikzpicture}  

&

% 4,a
\begin{tikzpicture}
     \node[label = {}] (1) at (45:\R) {};
     \node (2) at (135:\R) {};
     \node (3) at (225:\R) {};
     \node (4) at (315:\R) {};
     \draw (1) -- (2);
\draw (2) -- (3);
\draw (3) -- (4);
\draw (4) -- (1);
\draw (2) -- (4);
\end{tikzpicture} 

&

% 4,b
\begin{tikzpicture}
     \node[label = {}] (1) at (90:\R) {};
     \node (2) at (210:\R) {};
     \node (3) at (330:\R) {};
     \node (4) at (90:\R/5) {};
     \draw (1) -- (2);
\draw (2) -- (3);
\draw (3) -- (1);
\draw (1) -- (4);
\draw (2) -- (4);
\draw (3) -- (4);
\end{tikzpicture}  \\

$B_2$ & $B_3$ & $B_{4,a}$ & $B_{4,b}$

\end{tabular}
\caption{Possible triangular-blocks on at most four vertices (up to redrawing)}
\label{table:small blocks}
\end{figure}

\begin{figure}
\centering
\begin{tabular}{c c c c}
% 5, a
\begin{tikzpicture}
     \node[label = {}] (1) at (18:\R) {};
     \node (2) at (90:\R) {};
     \node (3) at (162:\R) {};
     \node (4) at (234:\R) {};
     \node (5) at (306:\R) {};
     \draw (1) -- (2);
\draw (2) -- (3);
\draw (3) -- (4);
\draw (4) -- (5);
\draw (5) -- (1);
\draw (2) -- (4);
\draw (2) -- (5);
\end{tikzpicture}

&

% 5,b
\begin{tikzpicture}
     \node[label = {}] (1) at (45:\R) {};
     \node (2) at (135:\R) {};
     \node (3) at (225:\R) {};
     \node (4) at (315:\R) {};
     \node (5) at (0:0) {};
     \draw (1) -- (2);
\draw (2) -- (3);
\draw (3) -- (4);
\draw (4) -- (1);
\draw (2) -- (4);
\draw (1) -- (3);
\end{tikzpicture} 

&

% 5,c
\begin{tikzpicture}
     \node[label = {}] (1) at (45:\R) {};
     \node (2) at (135:\R) {};
     \node (3) at (225:\R) {};
     \node (4) at (315:\R) {};
      \node(5) at (45:\R/2) {};
     \draw (1) -- (2);
\draw (2) -- (3);
\draw (3) -- (4);
\draw (4) -- (1);
\draw (2) -- (4);
\draw (1) -- (5);
\draw (2) -- (5);
\draw (4) -- (5);
\end{tikzpicture} 

&

% 5,d
 \begin{tikzpicture}
     \node[label = {}] (1) at (90:\R) {};
     \node (2) at (210:\R) {};
     \node (3) at (330:\R) {};
     \node (4) at (90:0cm) {};
     \node (5) at (90:\R/2) {};
     \draw (1) -- (2);
\draw (2) -- (3);
\draw (3) -- (1);
\draw (1) -- (4);
\draw (2) -- (4);
\draw (3) -- (4);
\draw (1) -- (5);
\draw (2) -- (5);
\draw (3) -- (5);
\end{tikzpicture} \\

$B_{5,a}$ & $B_{5,b}$ & $B_{5,c}$ & $B_{5,d}$ 

\end{tabular}
\caption{Possible $5$-vertex triangular-blocks (up to redrawing)}
 \label{table:5-vertex blocks}
\end{figure}
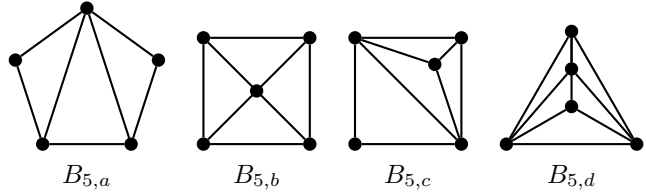

\begin{figure} 
\centering
\begin{tabular}{c c c}
% 6,a
\begin{tikzpicture}[ultra thick]
     \node[label = {}] (1) at (0:\R) {};
     \node (2) at (60:\R) {};
     \node (3) at (120:\R) {};
     \node (4) at (180:\R) {};
     \node (5) at (240:\R) {};
      \node (6) at (300:\R) {};
     \draw (1) -- (2);
\draw (2) -- (3);
\draw (3) -- (4);
\draw (4) -- (5);
\draw (5) -- (6);
\draw (6) -- (1);
\draw (3) -- (5);
\draw (3) -- (6);
\draw (3) -- (1);
\end{tikzpicture}

% 6,b
& \begin{tikzpicture}[ultra thick]
     \node[label = {}] (1) at (0:\R) {};
     \node (2) at (60:\R) {};
     \node (3) at (120:\R) {};
     \node (4) at (180:\R) {};
     \node (5) at (240:\R) {};
      \node (6) at (300:\R) {};
     \draw (1) -- (2);
\draw (2) -- (3);
\draw (3) -- (4);
\draw (4) -- (5);
\draw (5) -- (6);
\draw (6) -- (1);
\draw (3) -- (5);
\draw (3) -- (6);
\draw (2) -- (6);
\end{tikzpicture}

% 6,c
& \begin{tikzpicture}[ultra thick]
     \node[label = {}] (1) at (0:\R) {};
     \node (2) at (60:\R) {};
     \node (3) at (120:\R) {};
     \node (4) at (180:\R) {};
     \node (5) at (240:\R) {};
      \node (6) at (300:\R) {};
     \draw (1) -- (2);
\draw (2) -- (3);
\draw (3) -- (4);
\draw (4) -- (5);
\draw (5) -- (6);
\draw (6) -- (1);
\draw (3) -- (5);
%\draw (3) -- (6);
\draw (5) -- (1);
\draw (3) -- (1);
\end{tikzpicture} \\

$B_{6,a}$ & $B_{6,b}$ & $B_{6,c}$ \\

\hline \\
% 6, e
\begin{tikzpicture}
     \node[label = {}] (1) at (18:\R) {};
     \node (2) at (90:\R) {};
     \node (3) at (162:\R) {};
     \node (4) at (234:\R) {};
     \node (5) at (306:\R) {};
     \node (6) at (0:0) {};
     \draw (1) -- (2);
\draw (2) -- (3);
\draw (3) -- (4);
\draw (4) -- (5);
\draw (5) -- (1);
\draw (2) -- (4);
\draw (2) -- (6);
\draw (2) -- (5);
\draw (4) -- (6);
\draw (5) -- (6);
\end{tikzpicture}

% 6,f
& \begin{tikzpicture}
     \node[label = {}] (1) at (18:\R) {};
     \node (2) at (90:\R) {};
     \node (3) at (162:\R) {};
     \node (4) at (234:\R) {};
     \node (5) at (306:\R) {};
     \node (6) at (162:2\R/3) {};
     \draw (1) -- (2);
\draw (2) -- (3);
\draw (3) -- (4);
\draw (4) -- (5);
\draw (5) -- (1);
\draw (2) -- (4);
\draw (2) -- (6);
\draw (2) -- (5);
\draw (4) -- (6);
\draw (3) -- (6);
\end{tikzpicture}

% 6,g
& \begin{tikzpicture}
     \node[label = {}] (1) at (18:\R) {};
     \node (2) at (90:\R) {};
     \node (3) at (162:\R) {};
     \node (4) at (234:\R) {};
     \node (5) at (306:\R) {};
     \node (6) at (270:\R/3) {};
     \draw (1) -- (2);
\draw (1) -- (3);
\draw (2) -- (3);
\draw (3) -- (4);
\draw (4) -- (5);
\draw (5) -- (1);
\draw (1) -- (6);
\draw (3) -- (6);
\draw (4) -- (6);
\draw (5) -- (6);
\end{tikzpicture} \\

$B_{6,d}$ & $B_{6,e}$ & $B_{6,f}$ \\
\hline \\

% 6, i
\begin{tikzpicture}
     \node[label = {}] (1) at (45:\R) {};
     \node (2) at (135:\R) {};
     \node (3) at (225:\R) {};
     \node (4) at (315:\R) {};
     \node (5) at (45:\R/3) {};
     \node (6) at (45:2\R/3) {};
     \draw (1) -- (2);
\draw (2) -- (3);
\draw (3) -- (4);
\draw (4) -- (1);
\draw (2) -- (4);
\draw (1) -- (6);
\draw (2) -- (5);
\draw (2) -- (6);
\draw (4) -- (5);
\draw (4) -- (6);
\draw (5) -- (6);
\end{tikzpicture} 

% 6, j
& \begin{tikzpicture}
     \node[label = {}] (1) at (45:\R) {};
     \node (2) at (135:\R) {};
     \node (3) at (225:\R) {};
     \node (4) at (315:\R) {};
     \node (5) at (20:\R/2) {};
     \node (6) at (70:\R/2) {};
     \draw (1) -- (2);
\draw (2) -- (3);
\draw (3) -- (4);
\draw (4) -- (1);
\draw (2) -- (4);
\draw (1) -- (5);
\draw (1) -- (6);
\draw (2) -- (6);
\draw (4) -- (5);
\draw (4) -- (6);
\draw (5) -- (6);
\end{tikzpicture} 

% 6, k
& \begin{tikzpicture}
     \node[label = {}] (1) at (45:\R) {};
     \node (2) at (135:\R) {};
     \node (3) at (225:\R) {};
     \node (4) at (315:\R) {};
     \node (5) at (45:\R/2) {};
     \node (6) at (225:\R/2) {};
     \draw (1) -- (2);
\draw (2) -- (3);
\draw (3) -- (4);
\draw (4) -- (1);
\draw (2) -- (4);
\draw (1) -- (5);
\draw (2) -- (5);
\draw (4) -- (5);
\draw (3) -- (6);
\draw (2) -- (6);
\draw (4) -- (6);
\end{tikzpicture}  \\

$B_{6,g}$ & $B_{6,h}$ & $B_{6,i}$

\end{tabular}
\caption{Possible $6$-vertex triangular-blocks with a chord (up to redrawing)}
\label{table:6-vertex blocks}
\end{figure}
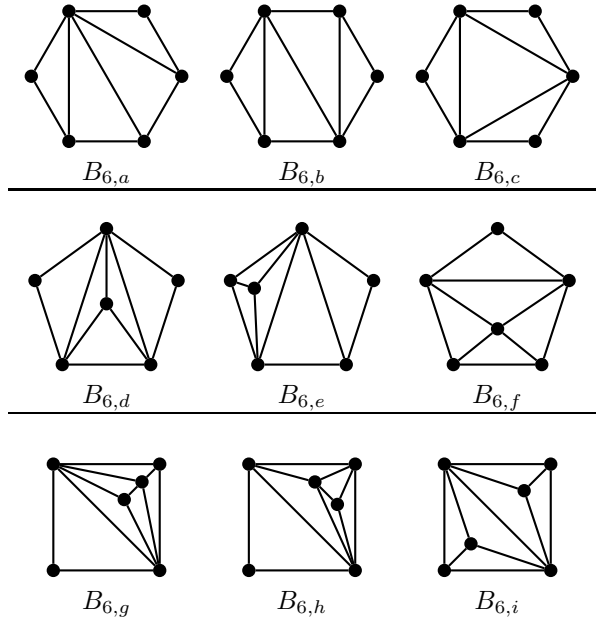

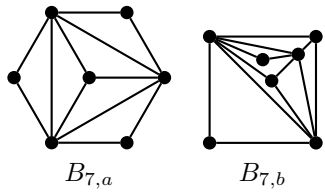
\begin{figure}
\centering
\begin{tabular}{c c}
% 7,a
\begin{tikzpicture}[ultra thick]
     \node[label = {}] (1) at (0:\R) {};
     \node (2) at (60:\R) {};
     \node (3) at (120:\R) {};
     \node (4) at (180:\R) {};
     \node (5) at (240:\R) {};
      \node (6) at (300:\R) {};
      \node (7) at (0:0) {};
     \draw (1) -- (2);
\draw (2) -- (3);
\draw (3) -- (4);
\draw (4) -- (5);
\draw (5) -- (6);
\draw (6) -- (1);
\draw (3) -- (5);
%\draw (3) -- (6);
\draw (5) -- (1);
\draw (3) -- (1);
\draw (7) -- (1);
\draw (3) -- (7);
\draw (5) -- (7);
\end{tikzpicture}

% 7,b
& \begin{tikzpicture}
     \node[label = {}] (1) at (45:\R) {};
     \node (2) at (135:\R) {};
     \node (3) at (225:\R) {};
     \node (4) at (315:\R) {};
     \node (5) at (45:\R/6) {};
     \node (6) at (45:2\R/3) {};
     \node (7) at (90:2\R/5) {};
     \draw (1) -- (2);
\draw (2) -- (3);
\draw (3) -- (4);
\draw (4) -- (1);
\draw (2) -- (4);
\draw (1) -- (6);
\draw (2) -- (5);
\draw (2) -- (6);
\draw (4) -- (5);
\draw (4) -- (6);
\draw (5) -- (6);
\draw (2) -- (7);
\draw (6) -- (7);
\end{tikzpicture} \\

$B_{7,a}$ & $B_{7,b}$

\end{tabular}
\caption{Possible $7$-vertex $C_7$-free triangular-blocks (up to redrawing)}
 \label{table:7-vertex blocks}
\end{figure}

In this section we characterize all possible triangular-blocks in a $C_7$-free plane graph, up to redrawings that preserve all facial cycles. 
Certainly any near triangulation with at most six vertices is a candidate.
In fact, the following corollary of Lemma~\ref{lem:common vertex} shows that these are the only possible triangular-blocks with at most six vertices.

\begin{lemma} \label{lem:near-triangulation}
Let $G$ be a $2$-connected plane graph and let $B$ be a triangular-block of $G$ with at most six vertices.
Then $B$ is a near triangulation, up to redrawings that preserve all facial cycles.
\end{lemma}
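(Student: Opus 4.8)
The plan is to show that a triangular-block $B$ with at most six vertices has no hole in its outer face, and then to reduce to the case where $B$ itself is $2$-connected so that every face is bounded by a cycle. First I would dispose of the trivial case $e(B) \le 3$ directly: if $B$ has at most three vertices it is $K_2$ or $K_3$, both near triangulations. So assume $B$ has at least four vertices and hence contains at least one facial triangle, meaning $B$ is an iterative union of facial triangles of $G$. By redrawing $G$ (which preserves all facial cycles and hence all triangular-blocks), I would try to arrange that the outer face of the drawing of $B$ is \emph{not} a hole; equivalently, some facial triangle of $B$ is on the outer boundary and is a face of $G$. The point is that if every face of $B$ is a hole then $B$ cannot be an iterative union of facial triangles of $G$ consistently with $2$-connectivity — but more carefully, one always has the freedom to re-embed $G$ so that a chosen face of $G$ becomes the outer face, so I can simply choose the outer face of the redrawn $G$ to lie inside a face of $B$ that is a genuine facial triangle of $G$; such a facial triangle exists since $B$ has at least one.

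Next, with the outer face of $B$ not a hole, I would argue that $B$ is $2$-connected. Since $G$ is $2$-connected and $B$ is an edge-induced union of facial triangles, a cut vertex of $B$ would have to separate $B$ into parts each containing facial triangles; but the faces of $B$ other than the outer one that are not faces of $G$ (holes) are the only obstruction, and in a block decomposition of $B$ each block would be a near triangulation. Actually the cleanest route is: suppose $B$ is not $2$-connected and let $v$ be a cut vertex; then $v$ appears on at least two faces of $B$ that ``wrap around'' it, and at least one of these is an inner face of $B$. Combined with the fact that the outer face is not a hole, I can produce two facial cycles of $B$ of length at least four sharing the vertex $v$ — wait, that is not immediate since $B$ might have few vertices. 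Instead I would use Lemma~\ref{lem:common vertex} more directly after first establishing $2$-connectivity by a small case analysis on $|V(B)| \le 6$: with at most six vertices and at least one facial triangle, a cut vertex $v$ forces the two sides to be very small (a single triangle plus a pendant structure), and one checks this either violates that $G$ is $2$-connected (the other neighbors of the pendant part would have to route around a hole) or violates that $B$ is a \emph{maximal} equivalence class of facial triangles.

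Once $B$ is $2$-connected, every face of $B$ is bounded by a cycle. If some inner face $F$ of $B$ has length at least four, I want to show $F$ is a face of $G$ as well, so that $B$ has no holes among its inner faces, and since the outer face is also not a hole, $B$ is a near triangulation. Suppose not: $F$ is a hole, i.e.\ there are edges or vertices of $G$ drawn inside $F$. Re-embed $G$ so that $F$ becomes the outer face of $B$; by Lemma~\ref{lem:interior path}, every $C$-path in $B$ (where $C$ bounds $F$) contains an edge in two facial triangles of $G$, which constrains the chords of $F$. Then using $|V(B)| \le 6$ and that the total number of facial triangles of $B$ is bounded, I would count: a $2$-connected near-triangulation-like block on at most six vertices with a face of length $\ge 4$ has essentially only the configurations of $B_{4,a}$, $B_{5,a}$, $B_{5,b}$, $B_{6,a}$, $B_{6,b}$, $B_{6,c}$, $B_{6,g}$, \dots, and in each case one checks directly that the claimed $F$ being a hole is compatible with redrawing but does not destroy the near-triangulation property once we pass to the standard drawing. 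I expect the main obstacle to be handling the case distinctions cleanly when $B$ is not $2$-connected: ruling out cut vertices really does use both the $2$-connectivity of $G$ and the maximality in the definition of triangular-block, and the argument has to be phrased so it does not secretly assume what it is proving. The face-length and hole analysis, by contrast, follows fairly mechanically from Lemmas~\ref{lem:interior path} and~\ref{lem:common vertex} together with the bound $|V(B)| \le 6$.
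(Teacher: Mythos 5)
There is a genuine gap, and it starts with a conceptual conflation: you treat ``hole'' (a face of $B$ that is not a face of $G$) as if it were the same thing as ``non-triangular face of $B$'', but these are independent properties. Being a near triangulation is purely a statement about face lengths --- every face except possibly the outer one is a triangle --- so showing that the outer face is not a hole, or that an inner face $F$ of length at least four ``is a face of $G$ as well, so that $B$ has no holes among its inner faces'', does not yield the conclusion. Indeed the paper notes that the inner $4$-face of $B_{7,b}$ may or may not be a face of $G$; either way that block has an inner face of length four, and the only reason it still qualifies as a near triangulation is that one can re-embed so that this \emph{single} long face becomes the outer face. Your plan as written would ``prove'' the lemma even for a block with two non-triangular faces both of which are genuine faces of $G$, which is exactly the situation that must be excluded.

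The actual content of the lemma --- that $B$ has at most one facial cycle of length at least four, which can then be made the outer face by re-embedding --- is never established in your proposal; you defer it to an unspecified enumeration of configurations. The paper obtains it in one line from Lemma \ref{lem:common vertex}: two facial cycles of $B$ of length at least four share at most one vertex, hence together contain at least $4+4-1=7$ vertices, contradicting $|V(B)|\le 6$. (This is why the paper presents the statement as a corollary of Lemma \ref{lem:common vertex} with no separate proof.) Your worry about $2$-connectivity of $B$ is legitimate insofar as one needs the faces of $B$ to be bounded by cycles before speaking of their lengths, but it does not require case analysis: a non-trivial triangular-block is by definition a union of facial triangles that can be ordered so that each shares an edge with the union of its predecessors, and gluing $2$-connected graphs along at least two common vertices preserves $2$-connectivity. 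With that observation and the counting argument above the lemma follows; the machinery of Lemma \ref{lem:interior path} and the configuration-by-configuration check you sketch are not needed here.
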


Figures \ref{table:small blocks}, \ref{table:5-vertex blocks}, and \ref{table:6-vertex blocks} list all near triangulations with at most five vertices and all six-vertex near triangulations with a chord. 
We next prove that there are only two possible $C_7$-free triangular-blocks with more than six vertices; these are shown in Figure \ref{table:7-vertex blocks}.

\begin{lemma} \label{lem:all blocks}
Every $C_7$-free triangular-block of a $2$-connected plane graph $G$ has at most six vertices, with the exceptions of $B_{7,a}$ and $B_{7,b}$ as in Figure \ref{table:7-vertex blocks}.
\end{lemma}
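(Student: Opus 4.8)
\textbf{Proof proposal for Lemma \ref{lem:all blocks}.}

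The plan is to show that a $C_7$-free triangular-block $B$ of a $2$-connected plane graph $G$ with $|V(B)| \ge 7$ must be isomorphic to $B_{7,a}$ or $B_{7,b}$. First I would observe that $B$ itself is a near triangulation (this requires a small argument since Lemma \ref{lem:near-triangulation} is only stated for at most six vertices, but the key point is that a triangular-block is a union of facial triangles glued along shared edges, so it is an iterated union of triangles; one must rule out configurations where a redrawing is needed, using Lemma \ref{lem:common vertex} to control how holes can overlap). So from now on assume $B$ is a near triangulation with outer cycle $C$ and $n := |V(B)| \ge 7$.

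The main step is a length/path argument. Since $B$ is a near triangulation with $n \ge 7$ vertices, I want to find a long path and derive a $C_7$. I would proceed by induction or by careful case analysis on the structure of $C$. If $B$ is $2$-connected, pick an edge $xy$ on $C$; then $B$ contains an $x$-$y$ Hamiltonian path if and only if (roughly) no degenerate obstruction occurs, and Lemma \ref{lem:paths} then produces an $x$-$y$ path of \emph{every} length from $d(x,y)=1$ up to $n-1 \ge 6$, in particular a path of length $6$, which together with the edge $xy$ gives a $C_7$. So the heart of the matter is: when does a near triangulation on $\ge 7$ vertices fail to have a Hamiltonian path between the two endpoints of some outer edge? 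I would invoke (an extension of) Lemma \ref{lem:Hpath} — or reprove its relevant direction for $n \ge 7$ — to show that the only obstructions are chords incident to $x$ or $y$ and a very restricted family of exceptional blocks; then I would check directly that each such exceptional configuration on $7$ vertices either already contains a $C_7$ or is exactly $B_{7,a}$ or $B_{7,b}$, and that no such configuration survives on $8$ or more vertices.

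Concretely, the case analysis would run as follows. First handle $n \ge 8$: show that every near triangulation on at least $8$ vertices contains a $C_7$ (for instance by finding a separating chord $xz$ splitting $B$ into near triangulations $B_1, B_2$ and combining paths via Lemma \ref{lem:paths} to realize length $7$, or by a direct Hamiltonian-path argument on $B$ minus a suitable low-degree vertex), so $B$ with $\ge 8$ vertices cannot be $C_7$-free. Then, for $n = 7$: if $B$ has an outer edge $xy$ whose endpoints support a Hamiltonian path of $B - \{$something$\}$ of length $6$, we get a $C_7$; ruling this out forces $B$ to be one of a short list of $7$-vertex near triangulations, which one then inspects one by one, discarding those containing a $7$-cycle and identifying the survivors as $B_{7,a}$ and $B_{7,b}$.

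The main obstacle I expect is the $n = 7$ enumeration: there are many $7$-vertex near triangulations, and ruling them out cleanly requires a good organizing principle — most naturally, classifying by the number of interior vertices (0, 1, or more) and by the structure of the outer cycle $C$ (its length and chords). The case with no interior vertex and a short outer cycle (triangle or quadrilateral with many chords) is where $B_{7,a}$ and $B_{7,b}$ live, so that case needs the most care; the cases with $|C|$ large ($6$ or $7$) should quickly yield a $C_7$ by walking around $C$ and using one chord as a shortcut, and the cases with two or more interior vertices are similarly constrained by planarity and by Lemma \ref{lem:paths} applied to a suitable sub-near-triangulation.
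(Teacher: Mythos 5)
Your opening reduction is already problematic: $B_{7,b}$ is \emph{not} a near triangulation under any drawing. It has two faces of length four (its outer $4$-face and the interior $4$-face pointed out in Section \ref{sec:proof strategy}), and since a redrawing can move only one of them to the outside, at least one interior face of length four always remains. So the step ``observe that $B$ itself is a near triangulation'' cannot be made to work and would discard one of the two exceptional blocks the lemma is supposed to produce; moreover, any classification restricted to near triangulations also leaves untreated the non-near-triangulation blocks on eight or more vertices (for instance, one-vertex extensions of $B_{7,b}$), which your $n\ge 8$ step does not address.

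Beyond that, the two steps carrying all the content --- that every candidate block on at least eight vertices contains a $C_7$, and the enumeration of the seven-vertex cases --- are plans rather than arguments: you yourself flag the $n=7$ enumeration as the main obstacle, and your route for $n\ge 8$ (find a Hamiltonian path between the ends of an outer edge, or split along a chord and combine paths) would require exactly the extension of Lemma \ref{lem:Hpath} to $n\ge 7$ that you defer. The paper sidesteps both difficulties with one structural observation you are missing: any triangular-block on at least seven vertices contains a subgraph $H$ obtained from a \emph{six}-vertex triangular-block $B'$ by attaching a single new vertex $z$ to both ends of an edge $xy$ on a facial cycle of $B'$ bounding a hole. $C_7$-freeness then says precisely that $B'$ has no $x$-$y$ Hamiltonian path, so the already-proved Lemma \ref{lem:Hpath} (together with Lemmas \ref{lem:interior path} and \ref{lem:common vertex}, which control where $xy$ can sit) pins $H$ down to $B_{7,a}$ or $B_{7,b}$, and a short final check shows that neither can be extended by a further vertex without creating a $C_7$. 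This ``grow by one vertex from the classified six-vertex blocks'' step is the missing idea that keeps the Hamiltonian-path lemma at $n\le 6$, where it is actually proved, and makes the case analysis finite.
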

\begin{proof} Let $B$ be a $C_7$-free triangular-block of $G$ with at least seven vertices.

Suppose $B$ contains a $7$-vertex triangular-block $B_{7,a}$ or $B_{7,b}$. 
Note that triangular faces of $B_{7,a}$ and $B_{7,b}$ are not holes, by Lemma \ref{lem:interior path}.
We may assume that $G$ is drawn so that the $B_{7,a}$ or $B_{7,b}$ in $B$ are drawn as in Figure \ref{table:7-vertex blocks}. If $B$ has more than seven vertices, then $B$ has a plane subgraph obtained by adding a vertex adjacent to both ends of an edge on a non-triangular facial cycle of $B_{7,a}$ or $B_{7,b}$.
It is straightforward to check that any graph of this form contains $C_7$, a contradiction.
Therefore $B \in \{B_{7,a} , B_{7,b}\}$.
Hence, we may assume that $B$ contains neither $B_{7,a}$ nor $B_{7,b}$. 

Note that $B$ has a subgraph $H$ that is obtained from a $6$-vertex triangular-block $B'$ by adding a vertex $z$ that is adjacent to both vertices of an edge $xy$ of some facial cycle of $B'$ bounding a hole of $B'$.

Since $B$ is $C_7$-free, $B'$ has no $x$-$y$ Hamiltonian path. 
By Lemma \ref{lem:near-triangulation}, $B'$ is a near triangulation, and Lemma \ref{lem:Hpath} implies that $B'$ is not a triangulation.
By redrawing $G$ we may assume that the outer cycle $F_1$ of $B'$ has length at least four.
Lemma \ref{lem:Hpath} implies that if $xy$ is an edge on the outer cycle of $B'$, then $B'\cong B_{6,d}$ and $H = B_{7,a}$, a contradiction.
So $xy$ is incident to an interior triangular hole of $B'$.
This implies that $H$ has another facial cycle, $F_2$, of length at least four. 
By Lemma \ref{lem:common vertex}, $|F_1|=|F_2|=4$ and $|V(F_1)\cap V(F_2)|=1$.
Let $F_1=abcda$ and $F_2=auvwa$.

Suppose that $F_1$ has no chord in its interior.
Note that each interior face of $H$ is triangular except for $F_2$.
If $ab$ and $bc$ are incident with the same interior triangular face of $H$, then $ac$ is a chord of $H$, a contradiction.
Since $ab$ and $bc$ are incident with different interior faces, there is an interior edge of $H$ with $b$ as an end and with the other end in the set $\{u,v,w\}$.
However, neither $bu$ nor  $bw$ is an edge of $H$; otherwise, $H$ (and hence $G$) would contain $C_7$.
So $bv$ is an edge of $H$.
By symmetry, $dv$ is an edge of $H$ and $du$ and $dw$ are not edges of $H$.
The $4$-cycles $abvua$ and $advwa$ each bound two triangular faces of $H$, 
which forces $av$ to be an edge in the interior of both $abvua$ and $advwa$. This is a contradiction.

Thus, we may assume that $F_1$ has a chord in its interior.
Figure \ref{table:6-vertex blocks} shows all possible $6$-vertex triangular-blocks whose outer cycle has length four and has a chord in its interior, and 
only $B_{6,g}$ and $B_{6,h}$ could have an interior hole.
This hole is bounded by a facial cycle $F$ of $B'$ containing both interior vertices of $B'$.
It is not hard to check that for any distinct $s,t\in V(F)$, $B'$ has an $s$-$t$ Hamiltonian path, with one exception  (when $B'\cong B_{6,g}$, $s \in V(F_1)$, and $t$ is of degree $4$ in $B'$) that results in $H = B_{7,b}$.
\end{proof}

We will also need the following characterization of bad cherries in facial cycles with length at most six.

\begin{lemma} \label{lem:refinements}
Let $G$ be a $2$-connected plane graph, let $F$ be a petal of some triangular-block $B$ of $G$ with $4 \le |F| \le 6$ and $|B| \ge 3$, and let $F'$ be the refinement of $F$.
Then $F = F'$, unless 
\begin{enumerate}[$(i)$]
    \item $B \in \{B_{5,d}, B_{6,g}, B_{6,h}, B_{7,b}\}$,
    \item $F = wx_1x_2x_3$ where $x_1x_2x_3$ is a bad cherry on the outer cycle of $B$, and
    \item $wx_1$ and $wx_3$ are trivial triangular-blocks incident with exactly one face of $G$ with length less than eight.
\end{enumerate}
\end{lemma}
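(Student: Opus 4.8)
The plan is to analyze the bad cherries of a petal $F$ of a triangular-block $B$ directly from the definition, using the classification of small triangular-blocks already established. Recall that a bad cherry on $F$ is a triple $x_1x_2x_3$ of consecutive vertices of $F$ with $x_1x_3 \in E(G)\setminus E(F)$ such that $x_1$ and $x_3$ are junction vertices of a triangular-block $B''$ of $G$ that contains the triangle $x_1x_2x_3x_1$. The refinement $F'$ differs from $F$ precisely when $F$ has at least one bad cherry. So first I would fix a petal $F$ of $B$ with $4 \le |F| \le 6$ and $|B| \ge 3$, assume $F \ne F'$, and let $x_1x_2x_3$ be a bad cherry of $F$; the goal is to force conclusions $(i)$, $(ii)$, $(iii)$.

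Let $B''$ be the triangular-block containing the triangle $x_1x_2x_3$ guaranteed by the bad-cherry definition. Since $F$ is a petal of $B$ but shares the edges $x_1x_2$ and $x_2x_3$ with $F\cap B$ — note that $x_1x_2, x_2x_3 \in E(F)$, while the chord $x_1x_3 \notin E(F)$ lies on a facial triangle of $B''$, so the facial triangle $x_1x_2x_3$ of $B''$ is not a facial triangle of $G$ (it is separated from $F$'s face by the chord). First I would argue $B'' = B$: the vertex $x_2$ lies on the petal $F$ of $B$, so $x_2$ is incident with an edge of $B$ (namely the edges of $F\cap B$ at $x_2$), which forces $x_1x_2$ or $x_2x_3$ to be an edge of $B$; since that edge lies on the facial triangle $x_1x_2x_3$ of $B''$ and triangular-blocks are edge-disjoint, we get $B'' = B$. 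In particular $x_1x_2x_3$ lies on the outer cycle of $B$ (after redrawing so that the face of $G$ bounded partly by $F$ is the outer face), giving the "bad cherry on the outer cycle of $B$" part of $(ii)$, and $x_1$, $x_3$ are junction vertices of $B$.

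Next I would show $|F| = 4$, which together with the above yields $F = wx_1x_2x_3$ as in $(ii)$. If $|F| \ge 5$ then $F'$ has length at most $|F| - 1$, but more importantly I would use the $C_7$-freeness together with the hole structure: the chord $x_1x_3$ of $B$ together with $x_1x_2x_3$ bounds a hole of $B$ (since $x_1x_2x_3$ is not a facial triangle of $G$, and by Lemma~\ref{lem:interior path} a $C$-path in a block whose outer face is a hole must have an edge in two facial triangles — here $x_1x_3$ is a $C$-path of length $1$, a chord, which is in at most one facial triangle, so the outer face of $B$ is not a hole and the chord $x_1x_3$ together with the arc $x_1x_2x_3$ of the outer cycle bounds a hole of $B$ on the $x_2$ side). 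Then $B$ has a facial hole of length at least $4$ (namely containing the chord $x_1x_3$), and consulting Figures \ref{table:5-vertex blocks}, \ref{table:6-vertex blocks}, \ref{table:7-vertex blocks}: the only triangular-blocks with $|B| \ge 3$, a chord $x_1x_3$ on the outer cycle bounding a triangular hole $x_1x_2x_3$ together with $x_1,x_3$ being junction vertices are exactly those whose outer cycle is a $4$-cycle with such a chord, i.e. $B_{5,d}, B_{6,g}, B_{6,h}, B_{7,b}$ — this gives $(i)$ and forces $|F| = 4$, hence $(ii)$. (Here I expect a short case-check ruling out $B_{6,i}$ and others by noting the relevant vertices are not both junction vertices or the chord is not on the outer cycle.)

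Finally, for $(iii)$: since $F = wx_1x_2x_3$ with $x_1x_3$ the chord, the edges $wx_1$ and $wx_3$ of $F$ are not edges of $B$ (as $x_1x_2, x_2x_3$ are the edges of $F \cap B$), so $wx_1$ and $wx_3$ lie in triangular-blocks distinct from $B$. I would show these are trivial: if $wx_1$ lay on a facial triangle $wx_1u$ of $G$, then walking around, $u$ together with the structure of $B$ near $x_1$ and the chord would produce a cycle of length $7$ in $G$ (using that $B$ has a Hamilton-path-like structure on $4$ or more vertices from $x_1$; this is where Lemma~\ref{lem:paths}/\ref{lem:Hpath} and the explicit block pictures are used to count lengths), contradicting $C_7$-freeness; so $wx_1$ (and symmetrically $wx_3$) is a trivial triangular-block. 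For the "incident with exactly one face of length less than eight" part: $wx_1$ is incident with exactly two faces of $G$; one of them is $F$ (length $4 < 8$); if the other also had length less than $8$, then since $F = wx_1x_2x_3$ and $x_1x_3$ is a chord of $B$, combining the other short face at $wx_1$ with the path through $B$ from $x_1$ to $x_3$ and the edge $x_3w$ would again yield a $7$-cycle (again using the block structure to get the right length), a contradiction. I would then verify that in each of $B_{5,d}, B_{6,g}, B_{6,h}, B_{7,b}$ this forces the stated configuration and that conversely no other obstruction arises, so $F = F'$ in all remaining cases.

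\textbf{Main obstacle.} The crux is the length-counting in the last two steps: ruling out a non-trivial $wx_1$ or a second short face at $wx_1$ requires exhibiting a $C_7$ in $G$, and to get exactly length $7$ one must combine a path of the right length through $B$ (from $x_1$ to $x_3$, where the available path lengths are governed by Lemma~\ref{lem:paths} applied to the near-triangulation $B$) with the one or two extra edges at $w$; the bookkeeping is delicate because $B$ has only $4$, $5$, $6$, or $7$ vertices, so the path lengths available are limited, and one must check each of the four blocks $B_{5,d}, B_{6,g}, B_{6,h}, B_{7,b}$ separately — but in each case $B$ does contain an $x_1$–$x_3$ path of length $5$ (for $B_{7,b}$) or the needed shorter lengths, and closing up through $w$ gives the contradiction. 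A secondary subtlety is making sure the redrawing that puts the relevant face of $G$ on the outside does not disturb the block structure, which is guaranteed since all drawings we use share the same facial cycles.
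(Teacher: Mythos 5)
There is a genuine gap at the pivotal step: identifying why $B$ must lie in $\{B_{5,d}, B_{6,g}, B_{6,h}, B_{7,b}\}$. The paper's mechanism is that $G$ (being in $\cP_n$ in every application of this lemma) has no $(18/7)$-sparse set of order at most two, which forces the triangle $x_1x_2x_3x_1$ to contain at least \emph{two} vertices of $B$ strictly inside it; the interior subgraph $B'$ then has at least five vertices, Lemma \ref{lem:all blocks} pins $B$ down to the four listed blocks, and $B'\in\{B_{5,d},B_{6,g},B_{6,i}\}$. You never establish anything about the inside of the triangle. Your substitute claim --- that $x_1x_3$ together with the arc $x_1x_2x_3$ bounds a \emph{triangular hole} of $B$ --- is false in precisely the exceptional cases (there the triangle contains two vertices of $B$, so it is not a face of $B$ at all), and your derivation of it from Lemma \ref{lem:interior path} is unsound: with $F$ drawn outside $B$, the outer face of $B$ \emph{is} a hole, and that lemma then asserts the chord lies in two facial triangles of $G$, not "at most one." Moreover, without the sparse-set input your case-check does not close: under the bare definition of bad cherry, $B_{4,b}=K_4$, $B_{5,a}$ across one of its chords, and $B_{6,i}$ across its chord all admit bad cherries, and they are excluded only because the single vertex (or pair) trapped inside the triangle would form an $(18/7)$-sparse set. (Also, $B_{5,d}$ does not have "outer cycle a $4$-cycle with a chord," so even the shape of your characterization is off.)

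The missing "two interior vertices" fact is also what powers the remaining steps, so they are incomplete as written. The paper uses that $B'\in\{B_{5,d},B_{6,g},B_{6,i}\}$ contains $x_1$--$x_3$ paths of every length $1,\dots,4$: if $|F|\in\{5,6\}$ then $F-x_2$ is an $x_1$--$x_3$ path of length $3$ or $4$ avoiding the triangle's interior and closes to a $C_7$, which is the actual proof that $|F|=4$ (you only assert that the case analysis "forces" it); and any facial cycle of length less than eight at $wx_1$ or $wx_3$ other than $F$ yields an $x_1$--$x_3$ path of length $\ell\in\{3,\dots,6\}$ closing to a $C_7$ with a length-$(7-\ell)$ path in $B'$, which gives $(iii)$ with no delicate bookkeeping. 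Your sketch of $(iii)$ is in the right spirit, but the "crux" you flag dissolves once all four interior path lengths are available. Finally, your argument that the cherry's block equals $B$ ("$x_2$ lies on the petal $F$ of $B$, so $x_2$ is incident with an edge of $B$") does not follow, since a petal may contain vertices outside $B$; this point is admittedly also glossed over in the paper, which simply takes the cherry's block to be $B$.
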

\begin{proof}
Assume that $F \ne F'$.
Then $F$ has a bad cherry $x_1x_2x_3$.
By the definition of bad cherry, $x_1,x_2,x_3$ are in a common triangular-block $B$ of $G$. 
We may assume that $G$ is drawn so that $x_1x_2x_3$ is contained in the outer cycle $C$ of $B$ and that $F$ is not in the interior of $C$.
Let $B'$ be the interior of the triangle $x_1x_2x_3x_1$ in $B$, so $B'$ is a subgraph of $B$. 
Since $G$ has no (18/7)-sparse sets of order at most two, 
there are at least two vertices in $B'$, and therefore $B'$ has at least five vertices.
Thus, by Lemma \ref{lem:all blocks}, $B \in \{B_{5,d}, B_{6,g}, B_{6,h}, B_{7,b}\}$ and $(i)$ holds.
Note that $B' \in \{B_{5,d}, B_{6,g}, B_{6,i}\}$, up to redrawing.

We claim that $|F|=4$.
Since $B' \in \{B_{5,d}, B_{6,g}, B_{6,i}\}$, by checking cases, there is a length $i$ $x_1$-$x_3$ path $P_i$ in $B'$ for each $i \in [4]$.
Then $|F| \notin \{5,6\}$, or else $(F-x_2)\cup P_i$ with $i \in \{3,4\}$ is a $C_7$ in $G$. 
Therefore $|F| = 4$.

Let $w$ be the vertex in $V(F)\setminus \{x_1,x_2,x_3\}$.
We see that $(ii)$ holds.
If $wx_1$ or $wx_3$ is in a facial cycle of length less than eight other than $F$, then such a facial cycle contains an $x_1$-$x_3$ path of length $\ell \in \{3,4,5,6\}$ that is internally disjoint from $B'$.
The concatenation of this path with a length $(7 - \ell)$ $x_1$-$x_3$ path of $B'$ forms a $C_7$ in $G$, a contradiction.
Therefore $wx_1$ and $wx_3$ are trivial triangular-blocks of $G$ that are incident with exactly one face of $G$ with length less than eight and $(iii)$ holds.
\end{proof}

%%%%%%%%%% Large Triangular-Blocks

\section{Large Triangular-Blocks} \label{sec:large blocks}

Now that we have a complete characterization of possible triangular-blocks in a $C_7$-free plane graph, we proceed by estimating $$g(B)=24f(B)-17e(B)+6n(B)$$ for each possible triangular-block $B$.
In this section we analyze the possible triangular-blocks with at least six vertices for a graph $G \in \cP_n$.
We first consider chordless near triangulations, and then divide the remaining cases into three groups based on the length of the outer cycle.

\begin{lemma}
Let $G \in \cP_n$ with $n \ge 7$, and let $B$ be a triangular-block of $G$.
If $B$ is a $6$-vertex near triangulation whose outer cycle has no chord and whose outer face is its only hole, then $g(B) \le 0$.
\end{lemma}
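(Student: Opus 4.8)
The plan is to compute $g(B) = 24f(B) - 17e(B) + 6n(B)$ directly by carefully bounding each of the three terms for the specific block $B$ in question, namely the $6$-vertex chordless near triangulation whose outer cycle bounds its only hole. First I would record the combinatorial data of $B$ itself: since $B$ is a $6$-vertex near triangulation with a chordless outer cycle, its outer cycle $C$ has length $6$ (a shorter outer cycle would force a chord), so $e(B) = 6 + 3 = 9$ (the six boundary edges plus the three interior edges of the triangulation of a hexagon), and $B$ has exactly $3$ interior faces, all triangular. The outer face being the only hole means all three interior triangular faces are faces of $G$, so the number of non-hole faces of $B$ is $3$. Thus $24f(B) = 72 + 24\sum_{F \in \cP(B)} f_F(B)$, $17e(B) = 153$, and it remains to bound $6n(B)$ and the petal sum.

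Next I would bound $n(B) = \sum_{v \in V(B)} n(v) \le 6$, with the trivial bound $n(v) \le 1$; more usefully, $n(v) = 1$ for any non-junction vertex. The key quantity to control is the petal contribution $\sum_{F \in \cP(B)} f_F(B)$, where each petal $F$ is a facial cycle of $G$ sharing edges with $B$ but not equal to an interior face of $B$; since the outer face of $B$ is a hole, every petal meets $C$. For a petal $F$ with refinement $F'$, we have $f_F(B) = e(F \cap B)/e(F')$ (or $1/e(F')$ if $F\cap B$ is a bad cherry). The crucial point is that $G$ is $C_7$-free and $2$-connected with no small sparse set, which forces petals to be long: I would argue, using Lemma~\ref{lem:paths} and the characterization of $C_7$-free blocks, that any petal sharing a path of $k$ edges with $C$ must have refined length at least something like $k + $ (a controlled amount), so that $f_F(B) \le$ a useful constant per boundary edge. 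Summed over all petals, the total boundary edge count is the $6$ edges of $C$, so I expect a bound of the shape $\sum_F f_F(B) \le 6/8 \cdot 6 = 9/2$ in the worst case, or better; one must be careful about leaky petals and bad cherries, but Lemma~\ref{lem:refinements} says bad cherries cannot occur here since $B$ is not one of $B_{5,d}, B_{6,g}, B_{6,h}, B_{7,b}$.

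Putting the pieces together, I would show $g(B) = 72 + 24\sum_F f_F(B) - 153 + 6n(B) \le 72 + 24\cdot\frac{9}{2} - 153 + 36 = 72 + 108 - 153 + 36 = 63$, which is far from $\le 0$ — so the naive bound is nowhere near enough, and the real work is to show that $f_F(B)$ is actually much smaller on average. The honest estimate must exploit that the six boundary edges are distributed among the petals and that a petal of refined length $\ell$ covering $k$ boundary edges contributes only $k/\ell$, with $\ell \ge 8$ forced whenever $k \ge 2$ by $C_7$-freeness (since $B$ already supplies $x$-$y$ paths of every length up to $5$ between boundary vertices via Lemma~\ref{lem:paths}, closing up a short petal would create $C_7$). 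So a petal covering $k \ge 2$ boundary edges contributes at most $k/8$, and a petal covering a single boundary edge contributes at most $1/3$. The main obstacle is exactly this case analysis on how the six boundary edges are partitioned among petals — with the worst configurations being all-length-$1$ petals (giving $\sum_F f_F(B) \le 6/3 = 2$, hence $g(B) \le 72 + 48 - 153 + 36 = 3$, still positive) — so one additionally needs to use that many boundary vertices then become junction vertices, pulling $n(B)$ strictly below $6$, or that triangular petals of length $3$ are incident to further blocks forcing refinements; balancing the $f$-gain against the $n(B)$-loss is the delicate part. I would organize the final argument as: (1) reduce to counting, via $e(C)=6$, how boundary edges split among petals; (2) show each petal of refined length $\ell$ contributes $\le e(F\cap C)/\max(\ell,8)$ unless it is a length-$3$ triangle; (3) for every length-$3$ triangular petal attached along a single boundary edge, charge its two extra vertices, which are junction vertices, to reduce $n(B)$; (4) verify that in every case the resulting total is $\le 0$, treating the few borderline distributions by hand.
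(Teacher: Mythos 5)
There is a fundamental misreading of the hypothesis that derails the whole argument. You assert that a $6$-vertex near triangulation with chordless outer cycle must have outer cycle of length $6$, ``since a shorter outer cycle would force a chord.'' This is exactly backwards: if all six vertices lie on the outer cycle $C$ with $|C|=6$, then the three interior edges needed to triangulate the hexagon are precisely chords of $C$ --- indeed you describe them yourself as ``the three interior edges of the triangulation of a hexagon.'' So the blocks you analyze ($e(B)=9$, three interior faces) violate the hypothesis; they are $B_{6,a}$, $B_{6,b}$, $B_{6,c}$, which the paper treats in a separate lemma. The chordless hypothesis instead forces $|C|\le 5$ and at least one interior vertex (e.g.\ the octahedron with $|C|=3$, the wheel $W_5$ with $|C|=5$, or the $|C|=4$ configurations with two interior vertices), giving $e(B)=15-|C|$ and $|F(B)|=11-|C|$. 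All of your numerics, and the entire case analysis on how six boundary edges split among petals, are therefore computed for the wrong graphs.

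The misidentification also hides why the lemma is easy in the correct setting, and why your version stalls. Because $C$ is chordless, none of the exceptional cases of Lemma \ref{lem:Hpath} can occur, so $B$ has an $x$-$y$ Hamiltonian path for every pair of outer vertices, and Lemma \ref{lem:paths} then yields $x$-$y$ paths of every length from $d_B(x,y)\le 2$ up to $4$. Hence every $B$-path in a petal, and therefore every petal, has length at least $8$ --- including petals meeting $C$ in a single edge, so your ``worst configuration'' of short single-edge petals contributing $1/3$ each cannot arise. With all petals of length at least $8$ the bound $g(B)\le 0$ follows from one direct computation for each $|C|\in\{3,4,5\}$ (plus a short bad-cherry subcase when $|C|=3$ and only two outer vertices are junctions; note that, contrary to your claim, a bad cherry can occur here, on the outer triangle of the chordless triangulation). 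By contrast, your own accounting bottoms out at $g(B)\le 3>0$, and the proposed rescue --- charging junction vertices to pull $n(B)$ down and ``treating the few borderline distributions by hand'' --- is precisely the unproved content. So even setting aside the misread hypothesis, the argument as written does not establish $g(B)\le 0$.
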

\begin{proof}
\begin{comment}
We first show that each face $F$ in the flower of $B$ has length at least $8$.
Let $P$ be a longest path in the outer cycle of $F$ with ends $v$ and $w$ on the outer cycle $C$ of $B$ so that $P$ is internally disjoint from $B$.
By Lemma \ref{lem:Hpath} there is a Hamilton path in $B$ between $v$ and $w$.
Then by Lemma \ref{lem:paths} there is a length-$i$ path in $B$ from $v$ to $w$ for each $i$ with $d_B(v,w) \le i \le 5$.
Since $d_B(v,w) \in [2]$, this implies that $|P| \notin \{2,3,4,5\}$.
If $|P| = 6$, then $v$ and $w$ are not adjacent on $C$, which implies that $|F| \ge 8$.
If $|P| \ge 7$, then clearly $|F| \ge 8$.
So $P$ consists of a single edge.
Since $P$ is a longest path, it follows that every such path of $F$ consists of a single edge, and that each vertex of $F$ is a vertex of $B$.
Then $|C| = 6$, but since $B$ is chordless, this is a contradiction.
\end{comment}
Let $C$ be the outer cycle of $B$; this is the only possible non-triangle facial cycle of $B$. 
Since $C$ has a chord, $|C| \le 5$.
It follows from Lemmas \ref{lem:Hpath} and \ref{lem:paths} that for any distinct $x,y\in V(C)$, there is an $x$-$y$ path in $B$ of length $l$ for each $l$ between $d_B(x,y)$ (at most 2) and $|B|-2=4$. Thus, to avoid $C_7$, each petal $F$ of $B$ has length at least $8$ and, hence, the refinement of each petal has length at least $8$.

Note that at least two vertices on $C$ are junction vertices since $G$ is $2$-connected, and for each junction vertex $v$ we have $n(v)\le 1/2$. Moreover, from Euler's formula, we have $|E(B)|=15-|C|$ and $|F(B)|=11-|C|$.

If $|C| = 4$, then
$$g(B) \le 24(6 + 4/8) - 17\cdot 11 + 6(4 + 2/2) = -1;$$
and if $|C| = 5$, then 
$$g(B) \le 24(5 + 5/8) - 17\cdot 10 + 6(4 + 2/2) = -5.$$
Thus we may assume that $B$ is a triangulation.
If all vertices of $C$ are junction vertices of $B$ then
$$g(B) \le 24(7 + 3/8) - 17\cdot 12 + 6(3 + 3/2) = 0.$$
So assume that only two vertices of $C$ are junction vertices of $B$. Then $B$ has a bad cherry $x_1x_2x_3$ on a facial cycle $F$ of $B$ such that $x_1x_2,x_2x_3\in E(F)$. Thus, 
$$g(B) \le 24(7 + 1/8 + 1/8) - 17\cdot 12 + 6(4 + 2/2) = 0,$$
as desired.
\end{proof}

We next consider $6$-vertex and 7-vertex triangular-blocks whose outer cycle has length 4 and a chord.

\begin{lemma} \label{lem:outer 4-cycle}
Let $G \in \cP_n$ with $n \ge 7$, and let $B$ be a triangular-block of $G$ with no triangular holes and at most one hole. 
If $B \in \{B_{6,g}, B_{6,h}, B_{6,i}, B_{7,b}\}$, then $g(B) < 0$.
\end{lemma}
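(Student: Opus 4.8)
The plan is to bound $g(B)=24f(B)-17e(B)+6n(B)$ for each $B\in\{B_{6,g},B_{6,h},B_{6,i},B_{7,b}\}$ by first controlling the three constituent quantities. Since each such $B$ has outer cycle $C=abcda$ of length four with a chord, and at most one hole with no triangular hole, the hole (if it exists) must be the outer $4$-face, so every interior face of $B$ is triangular. By Euler's formula this pins down $e(B)$ and the number of non-hole faces of $B$: for the $6$-vertex blocks $e(B)\in\{11,12\}$ depending on whether the outer face is a hole, and for $B_{7,b}$, $e(B)\in\{14,15\}$. The contribution $6n(B)$ is maximized when as few vertices of $B$ as possible are junction vertices; since $G$ is $2$-connected, at least two vertices of $C$ are junction vertices, each contributing $n(v)\le 1/2$, while the remaining vertices contribute at most $1$ each, so $6n(B)\le 6(|B|-2+1)=6(|B|-1)$.

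The heart of the argument is bounding $\sum_{F\in\cP(B)}f_F(B)$, i.e.\ showing each petal $F$ contributes little. The key observation, exactly as in the proof of the previous lemma, is that $B$ contains $x$-$y$ paths of every length between $d_B(x,y)$ and $|B|-2$ for all distinct $x,y$ on the outer cycle $C$: this follows from Lemma~\ref{lem:Hpath} (which gives a Hamiltonian $x$-$y$ path unless an exceptional configuration occurs, and these blocks are not among the exceptions for pairs on $C$ once we check the degree-$4$ caveat) together with Lemma~\ref{lem:paths}. Hence if a petal $F$ meets $C$ in a path of length $\ge 2$, that path together with a complementary path in $B$ would create a $C_7$ unless $|F'|\ge 8$; and if $F$ meets $C$ in a single edge or is leaky, one argues similarly using Lemma~\ref{lem:common vertex} and Lemma~\ref{lem:refinements} to see that $e(F\cap B)/e(F')$ is small. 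I would organize this as: first handle petals incident with an edge of $C$ (each such petal $F$ has $|F'|\ge 8$, so contributes at most $e(F\cap B)/8$, and the edges of $F\cap B$ on $C$ total at most $4$); then note any chord of $C$ is interior and incident only with triangular non-hole faces, so contributes nothing extra; then account for the (at most one) hole, which by hypothesis is the outer face and borders at most $4$ petal-edges. Summing, $\sum_F f_F(B)\le 4/8 = 1/2$ in the worst case for the $6$-vertex blocks, and correspondingly for $B_{7,b}$.

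Putting the three estimates together, I expect for instance: if the outer face of $B_{6,g}$ is a hole, $g(B)\le 24(5+4/8)-17\cdot 11+6\cdot 5 = 132-187+30 = -25<0$; if instead $B$ is a near-triangulation (outer face not a hole) then $e(B)=12$, $f(B)=7$, and at most two vertices of $C$ are junctions forcing a bad cherry whose refinement shrinks the relevant petal, giving $g(B)\le 24(7+1/8+1/8)-17\cdot 12+6(4+1)=180-204+30=6$; this last case is precisely where I expect trouble, and it must be resolved by a sharper count — either more vertices are forced to be junction vertices, or the bad-cherry petals genuinely have refinement length $\ge 8$ by Lemma~\ref{lem:refinements}(iii), or these blocks with a non-hole outer $4$-face simply cannot occur in $G\in\cP_n$ because $C$ would be a $(18/7)$-sparse separating $4$-set. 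The main obstacle, then, is the bookkeeping for $B_{7,b}$ and the non-hole subcases: one must carefully use Lemma~\ref{lem:refinements} to see that whenever a petal is a bad cherry, its two flanking edges $wx_1,wx_3$ lie in no other short face, so the refinement has length $\ge 8$ and the petal contributes only $1/8+1/8$ rather than a full unit, and simultaneously check that the degree-$4$ exceptional pair in Lemma~\ref{lem:Hpath}(ii)/(iii) does not sabotage the "all path lengths realized" claim. Once those two points are nailed down, the displayed inequalities close in every case and $g(B)<0$ follows.
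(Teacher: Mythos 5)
Your overall strategy coincides with the paper's (bound each petal's contribution via Lemmas \ref{lem:paths} and \ref{lem:Hpath}), but the case you flag as ``precisely where I expect trouble'' is the actual crux of the lemma, and none of your three proposed resolutions is the correct one, so the proof does not close. The loophole is the chord pair: writing $C=xayb x$ with $xy$ the chord, the pair $\{x,y\}$ always falls under exception $(i)$ of Lemma \ref{lem:Hpath}, so there is \emph{no} Hamiltonian $x$-$y$ path in $B$ (in $B_{6,g}$, $B_{6,h}$, $B_{6,i}$ the degree-$2$ vertex $a$ forces this), and $B$ only realizes $x$-$y$ paths of lengths $1$ through $4$. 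Consequently a length-$2$ $B$-path $xty$ does not create a $C_7$, and a genuine $4$-face petal $F=xbytx$ can occur. Its refinement is the triangle $xyt$, so $f_F(B)=1/3$ --- your guess that Lemma \ref{lem:refinements}$(iii)$ forces the refinement to have length $\ge 8$ misreads that lemma (part $(iii)$ concerns the edges $tx,ty$, not $e(F')$, which equals $3$ here). The paper's resolution is different from all three of your guesses: it shows $F$ is the \emph{unique} short petal, that $b$ is then not a junction vertex while $x$, $y$, and $a$ all are, and that the two remaining $C$-edges $xa$, $ay$ lie in petals of length $\ge 8$, giving $g(B)\le 24(6+2/8+1/3)-17\cdot 11+6(3+3/2)=-2$ for the six-vertex blocks (and $-6$ for $B_{7,b}$). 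Without this forcing argument the bound $g(B)<0$ is simply not established in the short-petal case.

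There are also arithmetic problems in your setup. The edge count of a triangular-block is a property of the graph, not of which faces happen to be holes: $e(B)=11$ for all three six-vertex blocks and $e(B_{7,b})=13$ (it has \emph{two} quadrilateral faces), not $14$ or $15$; the ``$e(B)=12$, $f(B)=7$'' subcase you compute does not correspond to any of these blocks. Moreover, when the outer $4$-face is the unique hole, $B_{6,g}$ has $6$ non-hole faces, not $5$, so your headline bound $-25$ should be $-1$. Finally, the worry that the block ``cannot occur because $C$ is a sparse separating $4$-set'' is a dead end: $V(C)$ meets $11$ or more edges, well above $\frac{18}{7}\cdot 4$.
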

\begin{proof} We may assume that $G$ is drawn in the plane such that the outer face of $B$ is its hole. Let $C$ be the outer cycle of $B$. Let $x$ and $y$ be the ends of the unique chord of $C$, and let $a$ and $b$ be the other two vertices on the outer cycle of $B$, where $d_B(a) = 2$ if $B \ne B_{6,i}$.
If $B = B_{7,b}$, let $c$ be the interior degree 2 vertex of $B$. Note that when $|B|=6$ then $|E(B)|=11$ and $|F(B)|=7$, and when $|B|=7$ then $|E(B)|=13$ and $|F(B)|=8$.

Suppose for each petal $F$ of $B$, $|F|\ge 8$. 
Since $B$ has at least two junction vertices, if $|B| = 6$ then 
$$g(B) \le 24(6 + 4/8) - 17\cdot 11 + 6(4 + 2/2) = -1,$$
and if $|B| = 7$ then 
$$g(B) \le 24(7 + 4/8) - 17\cdot 13 + 6(5 + 2/2) = -5$$
as desired.

Thus, we may assume that there is some petal $F$ of $B$ with $|F| < 8$. 
We now show that $|F|=4$ and $F=xbytx$ for some $t\notin V(B)$. 
To see this, let $P$ be a subpath of $F$ that is also a $B$-path, and let $v$ and $w$ be the ends of $P$ in $B$.
By Lemmas \ref{lem:paths} and \ref{lem:Hpath} (applied to $B - c$ if $B = B_{7,b}$), we see that $B$ and $B-c$ have $v$-$w$ paths of any length between $d_B(v,w)$ and $5$, unless $B = B_{6,i}$ and $\{v,w\} = \{x,y\}$. Thus, since $G$ is $C_7$-free, $P=ab$, or $P=xty$ where $t\in V(F)\setminus V(B)$. 
This in particular implies  that $F$ cannot contain two such $B$-paths. 
Thus, $F \cap B$ is $xby$ or $yax$.
Then $B \ne B_{6,i}$ or else $G$ has an (18/7)-sparse set.
Hence, $|F| = 4$ and $F=xbytx$ as $a$ is a junction vertex (since $\{a\}$ is not an (18/7)-sparse set).

Note that $F$ is the unique petal of $B$ with length less than eight, $b$ is a not a junction vertex, and $xby$ is a bad cherry (thus $f_F(B)=1/3$). Therefore, 
if $B \in \{B_{6,g}, B_{6,h}\}$, we have
$$g(B) \le 24(6 + 2/8 + 1/3) - 17\cdot 11 + 6(3 + 3/2) = -2,$$ and if $B = B_{7,b}$ we have
$$g(B) \le 24(7 + 2/8 + 1/3) - 17\cdot 13 + 6(4 + 3/2) = -6,$$
as desired.
\end{proof}

To deal with other 6-vertex and 7-vertex triangular-blocks, we need a technical lemma.

\begin{lemma}\label{lem: short paths}
Let $G \in \cP_n$ with $n \ge 7$ and let $B$ be a triangular-block of $G$ with no triangular holes and $B \in \{B_{6,a}, B_{6,b}, B_{6,c}, B_{6,d}, B_{6,e}, B_{6,f},B_{7,a}\}$. Let $C$ be the outer cycle of $B$, $x_1x_2x_3$ be a subpath of $C$, and let $F_1,F_2$ be the petals of $B$ containing $x_1x_2,x_2x_3$, respectively. Suppose $G$ has a $B$-path $P$ between $x_1$ and $x_3$ of length at most 2. 
Then $|F_i|\ge 8$ for $i=1,2$, unless 
\begin{itemize}
\item $B=B_{6,a}$ with $x_1,x_3$ corresponding to $x,y$ as in Figure \ref{Hpath exceptions} and $F_1=F_2$ is a 4-cycle, or 

\item $B = B_{6,d}$ with $x_1,x_2$ corresponding to $x,y$ as in Figure \ref{Hpath exceptions} and $|F_1| \ge 8$.
\end{itemize}
\end{lemma}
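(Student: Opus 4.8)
The plan is to argue by contradiction: assume that neither listed configuration occurs and that $\min(|F_1|,|F_2|)\le 7$, and produce a $C_7$ in $G$, which is impossible since $G\in\cP_n$ is $C_7$-free. Draw $B$ with $C$ as its outer face; since $B$ has no triangular holes and is a near triangulation (Lemma \ref{lem:near-triangulation}, or by inspection for $B_{7,a}$), every $B$-path and every chord of $B$ lies in the region bounded by $C$.

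The engine is a \emph{path-surgery} principle. If $Q$ is a $B$-path with ends $u,v\in V(B)$ and $B$ has a Hamiltonian $u$-$v$ path, then by Lemma \ref{lem:paths} $B$ has $u$-$v$ paths of every length in $[d_B(u,v),|B|-1]$, so joining these to $Q$ produces cycles of every length in $[d_B(u,v)+|Q|,|B|-1+|Q|]$; if $7$ lies in this interval we are done. A Hamiltonian $u$-$v$ path fails to exist only when $uv$ is a chord of $C$ or $(B,\{u,v\})$ is an exceptional pair of Lemma \ref{lem:Hpath} (for $B=B_{7,a}$ one checks this directly), and a finite inspection of $B_{6,a},\dots,B_{6,f},B_{7,a}$ records, for each such ``bad pair'' $\{u,v\}$, the exact set of realizable $u$-$v$ path lengths; the same argument with this length list still forces $|Q|$ into a short explicit set. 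Lastly, a $B$-path $Q$ of length $1$ or $2$ together with $F\cap B$ would make the petal $F$ a facial triangle of $G$ sharing an edge with a facial triangle of $B$, hence part of $B$, which is impossible; so short $Q$ are excluded and surgery always forces $|Q|$ large.

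First I would apply this to $P$. Since $d_B(x_1,x_3)\le 2$ and $|P|\le 2$, the interval (or length list) produced has right endpoint at least $|B|-1+|P|$, so it avoids $7$ only if $|B|=6$ and $|P|=1$ (so $x_1x_3\in E(G)\setminus E(B)$), or $\{x_1,x_3\}$ is a bad pair; going through the bad pairs at distance $2$ on $C$ and combining with $P$ leaves finitely many surviving triples $(B,x_1x_2x_3,P)$. For each, I would run the surgery on $F_1$ and $F_2$: writing $R_i=F_i\cap B$, if $F_i$ is not leaky then $F_i=R_i\cup Q_i$ with $Q_i$ a $B$-path between the ends of $R_i$, and if $F_i$ is leaky it is an alternating union of arcs of $B$ and several $B$-paths; applying surgery to each such $B$-path, to suitable pairs of them, and to combinations with $P$, forces all involved $B$-paths to be single edges, whereupon triangular petals are excluded and only the two advertised configurations survive. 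When $x_2$ has degree $2$ in $B$, the no-$(18/7)$-sparse-set hypothesis makes $x_2$ a junction vertex, so $F_1\ne F_2$ and surgery gives $|F_1|,|F_2|\ge 8$; this disposes of most bad pairs. The exceptions arise only when $x_2$ (respectively a neighbor of $x_1$) may fail to be a junction vertex: in the $B_{6,a}$ case an edge leaving $x_2$ would close a $C_7$ with $P$ and a path of $B$, so $x_2$ is not a junction vertex, $F_1=F_2$, and it is the $4$-cycle $x_1x_2x_3\cup P$ with $|P|=2$; in the $B_{6,d}$ case, with $\{x_1,x_2\}$ the edge $\{x,y\}$ of Figure \ref{Hpath exceptions}, the refined surgery on $F_1$ (there being no Hamiltonian path between the ends of $R_1$) still gives $|F_1|\ge 8$, but a leaky $F_2$ touching $x_1$ can be the $4$-cycle formed by $x_2x_3$, a chord of $B$ at $x_1$, and a length-$2$ $B$-path, so $|F_2|$ may fail to reach $8$.

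The main obstacle is this final, heavily case-driven step. Everything rests on knowing, for each of the seven blocks and each relevant vertex pair, exactly which path lengths are realizable in $B$ — that is what decides whether a $C_7$ appears — and on tracking the constraints from $F_1$, $F_2$ and $P$ simultaneously, including every leaky-petal subcase and the delicate possibility $F_1=F_2$. Organizing this finite verification so that it is genuinely exhaustive, rather than any individual deduction, is where the real work lies.
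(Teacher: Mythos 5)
Your proposal follows essentially the same route as the paper: use Lemmas \ref{lem:paths} and \ref{lem:Hpath} to realize $u$-$v$ paths of every length in an interval, splice these with $P$ and with the $B$-paths inside $F_1,F_2$ to manufacture a $C_7$, and let the non-Hamiltonian pairs of Lemma \ref{lem:Hpath} together with the junction-vertex/sparse-set conditions account for exactly the two exceptional $4$-cycle petals (the paper organizes the finite check by splitting on $F_1\ne F_2$ versus $F_1=F_2$ rather than by surgering $P$ first, but the content is the same). One caution: your blanket claim that ``short $Q$ are excluded and surgery always forces $|Q|$ large'' is false for length-$2$ $B$-paths between vertices at distance two on $C$ — these yield legitimate $4$-cycle petals and are precisely what the two exceptions are built from — though your subsequent case discussion already treats them correctly, so this is a slip of phrasing rather than a gap.
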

\begin{proof}
First, suppose $F_1\ne F_2$.   
Then, because of $P$ we see that each $F_i$ contains an $x_{s_i}$-$x_{s_i + 1}$ $B$-path for some $s_i \in [2]$.
If $B \ne B_{6,d}$, then Lemmas \ref{lem:paths} and \ref{lem:Hpath} imply that for all $t\in \{2,3,4,5\}$, $B$ has an $x_{s_i}$-$x_{s_i+1}$ path of length $t$.
Thus, since $G$ is $C_7$-free, $|F_i|\ge 8$ for $i=1,2$. 
If $B = B_{6,d}$, then by checking cases we see that for all $t \in [4]$, $B$ has an $x_{s_i}$-$x_{s_i + 1}$ path of length $t$.
Since $|F_i| \ge 4$ (as $B$ is a triangular-block), if $|F_i| \le 6$ then by case analysis $B \cup F_i \cup P$ contains a $C_7$ unless $x_1, x_2$ correspond to $x,y$ as in Figure \ref{Hpath exceptions}, $|F_2| = 4$, and $x_2x_3x_1 \subseteq F_2$.
Hence, $|F_1| \ge 8$.

So we may assume $F_1=F_2$. Again, because of the path $P$ and planarity, $F_1\cap C=x_1x_2x_3$. So $|F_1|\ge 4$ as $B$ is a triangular-block. 

Suppose $B$ has an $x_1$-$x_3$ Hamiltonian path. Then by, Lemma~\ref{lem:paths}, $B$ has an $x_1$-$x_3$ path of length $\ell$ for every $\ell$ between $2$ and $5$. Therefore,  $|F_1|\ge 8$ to avoid a $C_7$ in $G$. 

Now assume $B$ has no $x_1$-$x_3$ Hamiltonian path.  Then by Lemma~\ref{lem:Hpath}, $B=B_{6,a}$ with $x_1,x_3$ corresponding to $x,y$ in Figure 1.    Now $B$ has an $x_1$-$x_3$ path of any length between 2 and 4. So if $|F_1|<8$ then $|F_1|=4$.
\end{proof}

We next consider $6$-vertex triangular-blocks with five vertices on the outer cycle.

\begin{lemma} \label{lem:outer 5-cycle}
Let $G \in \cP_n$ with $n \ge 7$, and let $B$ be a triangular-block of $G$ with no triangular hole.
If $B\in \{B_{6,d}, B_{6,e}, B_{6,f}\}$, then  $g(B) < 0$.
\end{lemma}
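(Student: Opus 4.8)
The plan is to reduce the statement to an inequality about $\sum_{F\in\cP(B)}f_F(B)$ and $n(B)$, as in Lemma~\ref{lem:outer 4-cycle}, and then to show that every petal of $B$ is large. Fix $B\in\{B_{6,d},B_{6,e},B_{6,f}\}$ and let $C=v_1v_2v_3v_4v_5$ be its outer $5$-cycle, so $e(B)=10$ and $B$ has exactly five triangular faces. Since $n\ge 7>|B|$ and $B$ has no triangular hole, some vertex of $G$ lies in the outer face of $B$, which is therefore the unique hole of $B$; hence $B$ has exactly five non-hole faces, the only faces of $G$ incident with an interior edge of $B$ are triangular faces of $B$, and so the only edges of $B$ lying on petals are the five edges of $C$. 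Moreover $2$-connectedness forces at least two vertices of $C$ to be junction vertices, so $n(B)\le 5$. As $g(B)=24f(B)-17e(B)+6n(B)=-50+24\sum_{F\in\cP(B)}f_F(B)+6n(B)$, it suffices to prove $24\sum_{F\in\cP(B)}f_F(B)+6n(B)<50$.

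The key claim is that every petal of $B$ has length at least $8$; granting this, each $C$-edge lies on a petal whose refinement has length at least $8$, no petal has a bad cherry by Lemma~\ref{lem:refinements}, so $\sum_{F\in\cP(B)}f_F(B)\le 5/8$ and $g(B)\le 24(5+5/8)-170+30=-5<0$. For the claim I would use: (a) no petal is a triangle, as a triangle petal shares an edge of $C$ with $B$, is hence $\sim$-equivalent to the interior triangular face of $B$ on that edge, and would therefore lie in $B$; and (b) by Lemmas~\ref{lem:paths} and~\ref{lem:Hpath}, $B$ contains an $x$-$y$ path of every length between $d_B(x,y)\le 2$ and $5$, for all distinct $x,y\in V(C)$ except the ends of a chord of $C$ in $B$ and, when $B=B_{6,d}$, the special edge of Figure~\ref{Hpath exceptions}. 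If $F$ is a petal and $F\cap B$ is a single subpath $xCy$ of $C$, let $R$ be the complementary $B$-path from $x$ to $y$; when $\{x,y\}$ is not exceptional, (b) together with $C_7$-freeness forces $|R|\ge 6$ and hence $|F|\ge 8$, while when $\{x,y\}$ is exceptional, a short direct check using the available lengths of $x$-$y$ paths in $B$, fact (a), Lemma~\ref{lem:refinements} (which rules out the would-be $4$- and $5$-cycle petals, since $F\cap B$ would be a bad cherry), and Lemma~\ref{lem: short paths} applied to the cherry inside $F$ when $|R|=2$ again gives $|F|\ge 8$.

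The main obstacle is the case of leaky petals, where $F\cap B$ is a disjoint union of subpaths of $C$ and $F$ alternates between $C$ and its exterior. Here I would use that an excursion of $F$ skipping over an edge $v_iv_{i+1}$ of $C$ has length at least $2$ (because $v_iv_{i+1}\in E(B)$), and that whenever an excursion is a single edge $v_av_b$ with $v_av_b\in E(G)\setminus E(B)$, splicing this edge with a suitable path of $B$ produces a $v_a$-$v_b$ path of length $6$, hence a $C_7$; combining this with $C_7$-freeness and the analysis above shows that leaky petals also have length at least $8$. In any residual borderline configuration the leaky petal adds only a bounded amount to $\sum_{F\in\cP(B)}f_F(B)$ above $5/8$, and this is absorbed because the endpoints of its excursions are forced to be junction vertices, lowering the bound on $n(B)$ correspondingly. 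Carrying out this analysis for each of $B_{6,d}$, $B_{6,e}$, and $B_{6,f}$ yields $24\sum_{F\in\cP(B)}f_F(B)+6n(B)\le 45<50$, hence $g(B)<0$.
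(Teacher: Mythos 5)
Your overall framework (write $g(B)=-50+24\sum_{F\in\cP(B)}f_F(B)+6n(B)$, bound the petal contributions and $n(B)$ separately) is the same as the paper's, and your preliminary observations (the outer $5$-cycle is the unique hole, only the five $C$-edges lie on petals) are fine. But the proof hinges on the claim that \emph{every} petal of $B$ has length at least $8$, and this claim is neither established by your argument nor consistent with the paper's own Lemma~\ref{lem: short paths}. That lemma explicitly carves out an exception for $B=B_{6,d}$ in which the petal $F_2$ containing the edge $x_2x_3$ is a $4$-cycle (a leaky petal of the form $x_2x_3\cdots x_1w$ using a chord $x_1x_3$ drawn outside $B$); one can check that such a configuration creates no $C_7$ inside $B\cup F_2$, so $C_7$-freeness alone does not exclude it. Your own last paragraph tacitly concedes this by retreating to ``residual borderline configurations'' whose extra contribution is ``absorbed,'' but that absorption is exactly the part of the proof that is missing: the final inequality $24\sum_F f_F(B)+6n(B)\le 45$ is asserted, not derived. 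Your appeal to Lemma~\ref{lem:refinements} to ``rule out'' $4$- and $5$-cycle petals is also misplaced: that lemma only controls petals whose intersection with $B$ is a \emph{bad cherry}, which requires $x_1x_3\in E(G)$ and specific junction conditions; a short non-leaky petal need not contain a bad cherry at all.

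Once petals of length $4$ are admitted, your bound $n(B)\le 5$ (from only two junction vertices) is no longer sufficient: with two $C$-edges in length-$4$ petals you get $\sum_F f_F(B)\le 3/8+2/4=7/8$, and $-50+24\cdot 7/8+6\cdot 5=1>0$. The paper closes this gap with a step you omit entirely: if $C$ had only two junction vertices, the four non-junction vertices of $B$ would form an $(18/7)$-sparse set (at most $e(B)=10\le 4\cdot 18/7$ incident edges), so $C$ has at least three junction vertices and $n(B)\le 3+3/2=9/2$, which gives $-50+24\cdot 7/8+27=-2<0$. The paper then organizes the case analysis around the $B$-paths of length at most $2$ (none; two with distinct end-pairs; all with the same end-pair), using Lemma~\ref{lem: short paths} in the middle case and deriving a contradiction via sparse sets and Hamiltonian paths in the last case. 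To repair your proof you would need to either (a) prove the three-junction-vertex fact and redo the numerics allowing two short petals, essentially reproducing the paper's argument, or (b) actually carry out the exceptional-case analysis you defer and show small petals are impossible, which the paper's Lemma~\ref{lem: short paths} suggests cannot be done by $C_7$-freeness alone.
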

\begin{proof}
By Lemma \ref{lem:refinements}, each petal of $B$ with length at most six is equal to its refinement.
We may assume that $B$ is drawn as shown in Figure \ref{table:6-vertex blocks}.
Let $C$ be the outer cycle of $B$, so $|C| = 5$. Since no triangular face of $B$ is a hole, all junction vertices of $B$ are contained in $C$ and there are at least two (as $G$ is 2-connected) and must include all degree 2 vertices of $B$ (to avoid an (18/7)-sparse set of order 1). 

In fact, $C$ has at least three junction vertices of $B$. For, otherwise, let $S$ be obtained from $V(B)$ by removing the junction vertices of $B$ (so $|S|=4$). Then one can check that $S$ is an (18/7)-sparse set, a contradiction.

If there are no $B$-paths of length at most 2, then each petal of $B$ has length at least 8, and so 
$$g(B) \le 24(5+5/8)-7\cdot 10+(3+3/2)< 0.$$
If there are two $B$-paths of length at most 2 that have different sets of ends on $C$, then by Lemma \ref{lem: short paths}, at least three petals of $B$ have length at least 8; so $$g(B) \le 24(5 + 3/8+2/4) - 17\cdot 10 + 6(3 + 3/2) = -2.$$

So we may assume that all $B$-paths of length at most 2 in $G$ have the same ends on $B$, say $x$ and $y$, and that there is a petal $F$ of $B$ with $|F| < 8$.  
Then $F\cap B$ is a path of length 2 or 3, or else $G$ contains a $C_7$ by Lemma~\ref{lem:Hpath} and Lemma~\ref{lem:paths}.
So by Lemma~\ref{lem: short paths}, $|F|=4$ and $F$ contains a $B$-path of length 2 between $x$ and $y$, call it $P$. 
If $xy\notin E(B)$ then $B$ has an $x$-$y$ Hamiltonian path (by Lemma~\ref{lem:Hpath}), together with $P$, gives a $C_7$, a contradiction.
So $xy\in E(B)$. But then  $G$ has an (18/7)-sparse set consisting of 1 or 2 vertices, a contradiction.
\end{proof}

Finally, we consider triangular-blocks with six vertices on the outer cycle.

\begin{lemma} \label{lem:outer 6-cycle}
Let $G \in \cP_n$ with $n \ge 7$, and let $B$ be a triangular-block of $G$ with no triangular hole.
If $B \in \{B_{6,a}, B_{6,b}, B_{6,c}, B_{7,a}\}$, then $g(B) \le 0$.
\end{lemma}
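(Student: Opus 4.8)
The plan is to mirror the structure of the preceding lemmas in this section: fix a drawing of $G$ so that the outer face of $B$ is its (unique) hole, write $C$ for the outer cycle of $B$ (so $|C| = 6$ in all four cases), and then case-split on how short the petals of $B$ can be. First I would record the Euler-formula bookkeeping: when $|B| = 6$ we have $e(B) = 9$ and $f(B) = 5$ (outer face plus four triangles, minus the one hole gives four non-hole faces), and when $B = B_{7,a}$ we have $e(B) = 12$ and $f(B) = 8$ with seven non-hole faces; also every triangular face of $B$ is a non-hole, so all junction vertices lie on $C$, there are at least two of them since $G$ is $2$-connected, and every degree-$2$ vertex of $B$ must be a junction vertex (otherwise $\{v\}$ is $(18/7)$-sparse). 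As in Lemma \ref{lem:outer 5-cycle}, I would push this sparseness argument further: if too few vertices of $C$ are junction vertices, then the set $S$ consisting of all non-junction vertices of $B$ (together with the interior vertex when $B = B_{7,a}$) is an $(18/7)$-sparse set of order at most four, a contradiction; this forces a lower bound (I expect three or four) on the number of junction vertices.

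Next I would handle the generic case where every petal of $B$ has length at least $8$, so every refinement has length at least $8$ and each $f_F(B) \le e(F \cap B)/8$; summing $e(F \cap B)$ over petals gives $|C| = 6$, so $\sum_F f_F(B) \le 6/8 = 3/4$. Plugging in the worst admissible $n(B)$ (all of $C$ junction, so $n(B) = 6 \cdot \tfrac12 = 3$ for $|B| = 6$, or $n(B) = 3 + 1 = 4$ for $B_{7,a}$ where the interior vertex contributes $1$) yields, for $|B| = 6$, $g(B) \le 24(4 + 3/4) - 17 \cdot 9 + 6 \cdot 3 = 24 \cdot 4.75 - 153 + 18 = 114 - 135 < 0$, and an analogous computation for $B_{7,a}$. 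The substantive case is when some petal $F$ has $|F| < 8$. Here I invoke Lemma \ref{lem: short paths}: such a short petal forces a $B$-path of length at most $2$, and the exceptional outcomes of that lemma are exactly (up to symmetry) that $B = B_{6,a}$ with the two special vertices $x,y$ and $F_1 = F_2$ a $4$-cycle, or $B = B_{6,d}$ — but $B_{6,d}$ is not in our list, so only the $B_{6,a}$ exception survives, plus for $B_{7,a}$ I would check $B_{7,a} - c$ behaves like $B_{6,c}$ under Lemmas \ref{lem:paths} and \ref{lem:Hpath}. So in the non-exceptional subcase no short petal can exist at all (every $B$-path of length $\le 2$ would, via Lemmas \ref{lem:paths} and \ref{lem:Hpath}, produce $x$-$y$ paths of all lengths between $d_B(x,y)$ and $|B| - 2 \ge 4$, and concatenating with the petal path gives a $C_7$), reducing to the generic case; in the $B_{6,a}$ exceptional subcase I would argue that the one short petal is a $4$-cycle through a bad cherry $x_1x_2x_3$, so $x_2$ is not a junction vertex, $f_F(B) = 1/3$ (via the refinement, using Lemma \ref{lem:refinements}), and the remaining petals have length $\ge 8$, then run the arithmetic: roughly $g(B) \le 24(4 + 1/3 + 4/8) - 17 \cdot 9 + 6(3 - 1/2 + \text{correction}) < 0$, being careful that losing $x_2$ as a junction vertex only decreases $n(B)$.

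The main obstacle I anticipate is the careful case analysis for $B_{7,a}$ and for the $B_{6,a}$-exception: I need to verify that the interior degree-$2$ vertex $c$ of $B_{7,a}$ cannot lie on a short petal (it is interior, hence on no petal) and that deleting it does not create Hamiltonian-path exceptions beyond those already catalogued, and for $B_{6,a}$ I must make sure the bookkeeping of which vertex stops being a junction vertex, and how $f_F(B)$ is computed when $F \cap B$ is a bad cherry versus a length-$2$ path that is not a bad cherry, is done exactly as in the definition of $f_F(B)$ and Lemma \ref{lem:refinements}. The arithmetic in each subcase is routine once the combinatorial structure is pinned down, so the proof reduces to enumerating, for each of the four blocks, the possible multiset of petal lengths and junction-vertex counts consistent with $C_7$-freeness and the absence of $(18/7)$-sparse sets, and checking $g(B) \le 0$ in each.
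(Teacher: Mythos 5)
Your setup (drawing, junction-vertex count via sparse sets, the all-petals-long computation) matches the paper's, but the heart of your argument --- ``in the non-exceptional subcase no short petal can exist at all, because every $B$-path of length $\le 2$ concatenates with a path in $B$ to give a $C_7$'' --- is false, and this is exactly where the real work of the lemma lies. A $B$-path $P$ of length $\ell$ between $v,w \in V(C)$ yields a $C_7$ only if $B$ has a $v$-$w$ path of length $7-\ell$. For $\ell = 2$ this needs a Hamiltonian path (length $5$ when $|B|=6$), which exists except in the $B_{6,a}$ exception; but for $\ell = 1$ it needs a path of length $6$, which cannot exist in a $6$-vertex block. So single-edge $B$-paths between non-adjacent vertices of $C$ (e.g.\ an external edge joining the two degree-$2$ vertices of $B_{6,b}$, which sit at distance $3$ on $C$) survive $C_7$-freeness and do produce petals of length $4$. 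Lemma \ref{lem: short paths} only rules out short petals when the ends of the $B$-path are at distance $2$ on $C$; it says nothing about ends at distance $3$. The paper's proof spends most of its length on precisely these configurations: it shows that any surviving short petal must absorb at least three edges of $C$ (or, for $B_{6,a}$, that tracking both degree-$2$ vertices forces two edges of $C$ into long petals), so that the generic inequality still applies with only the remaining edges counted at $1/8$. Your proposal has no mechanism for this counting.

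Two further concrete errors. First, your treatment of the $B_{6,a}$ exception assumes the length-$4$ petal passes through a bad cherry with $f_F(B)=1/3$; but a bad cherry $x_1x_2x_3$ requires $x_1x_3 \in E(G)$ and, by Lemma \ref{lem:refinements}, bad cherries in short petals occur only for $B \in \{B_{5,d}, B_{6,g}, B_{6,h}, B_{7,b}\}$ --- none of the blocks in this lemma. In the actual $B_{6,a}$ exception $F\cap B$ is a length-$2$ path with no chord between its ends, so $f_F(B)=2/4$, and the middle vertex is a degree-$3$ non-junction vertex contributing $1$ (not a savings) to $n(B)$; your ``correction'' to $n(B)$ goes the wrong way. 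Second, for $B_{7,a}$ Euler's formula gives $f = 2 - 7 + 12 = 7$ faces, hence $6$ non-hole faces, not $8$ faces and $7$ non-hole faces; the paper's bound $24(6 + 3/8 + 3/4) - 17\cdot 12 + 6(4+3/2) = 0$ is tight, so this off-by-one would wrongly make the case appear to fail (or trivially succeed, depending on the direction of the slip). The $B_{7,a}$ case also needs the dedicated argument that at most two short $B$-paths can coexist (none joining two degree-$2$ vertices of $B$), which your proposal defers to ``checking'' without identifying the actual obstruction.
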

\begin{proof}
By Lemma \ref{lem:refinements}, each petal of $B$ with length at most six is equal to its refinement.
We may assume that $B$ is drawn as shown in Figure \ref{table:6-vertex blocks} or Figure \ref{table:7-vertex blocks}.
Let $C$ be the outer cycle of $B$, so $|C| = 6$. 
Note that every degree 2 vertex in $B$ must be a junction vertex, to avoid an (18/7)-sparse set of order 1. Thus, if $B$ has two junction vertices then $B \in \{B_{6,a}, B_{6,b}\}$; but then $\{v\in V(B): d_B(v)\ge 3\}$ is an (18/7)-sparse set in $G$, a contradiction. So $B$ has at least three junction vertices.

If $B \ne B_{7,a}$ (so $|B|=6$, $|F(B)|=5$, and $|E(B)|=9$) and at least two edges of $C$ are in petals of $B$ of length at least 8, then  
\begin{align}
g(B) \le 24(4 + 2/8 + 4/4) - 17 \cdot 9 + 6(3 + 3/2) = 0.
\end{align}
If $B = B_{7,a}$ (so $|B|=7$, $|F(B)|=7$, and $|E(B)|=12$) and at least three edges of $C$ are in petals of $B$ of length at least 8, then 
\begin{align}
g(B) \le 24(6 + 3/8 + 3/4) - 17 \cdot 12 + 6(4 + 3/2) = 0.
\end{align}

Thus, we may assume that at most one edge (if $|B| = 6$) or two edges (if $|B| = 7$) of $C$ are in petals of $B$ of length at least 8.
Let $\cP$ be the collection of $B$-paths of $G$ with at most two edges.
Lemmas \ref{lem:paths} and \ref{lem:Hpath} imply that for each petal $F$ of $B$ with $|F| < 8$, $F$ contains a path in $\cP$.
We consider three cases.

First suppose that $B = B_{6,a}$.
Let $v$ be a degree 2 vertex of $B$.
Suppose that $v$ is in a petal $F$ with $|F| < 8$.
Since $G$ has no degree 2 vertices, $F$ contains a path $P$ in $\cP$ with $v$ as an end.
Since $v$ is not the end of a chord of $C$ and $G$ contains no $C_7$, it follows from Lemma \ref{lem:Hpath} that $P$ consists of a single edge.
By Lemma \ref{lem: short paths} and (1), the other end of $P$ is at distance three from $v$ on $C$.
But then applying the same reasoning to the other degree 2 vertex $u$ of $B$ shows that at least two edges of $C$ are each contained in a petal of length at least 8, and so (1) implies that $g(B) \le 0$.

Next suppose that $B \in \{B_{6,b}, B_{6,c}\}$.
Then by Lemma \ref{lem: short paths} and (1) we may assume that the ends of each $P \in \cP$ are at distance three on $C$.
This implies that each petal of $B$ with $|F| < 8$ contains exactly one path in $\cP$ as a subpath.
It is straightforward to check that $G$ has an (18/7)-sparse set unless $B = B_{6,b}$ and the ends of each $P \in \cP$ are the degree 2 vertices of $B$.
Since the degree 2 vertices of $B$ are not the ends of a chord of $C$, it follows that each $P \in \cP$ consists of a single edge to avoid a $C_7$ in $B \cup P$, and so $|\cP| = 1$.
Therefore there is a unique petal $F$ of $B$ with $|F| < 8$.
Since only three edges of $C$ are contained in $F$, there are three edges of $C$ contained in petals of length at least 8.
Then (1) implies that $g(B)  \le 0$.

Finally, suppose that $B = B_{7,a}$.
If there is a path $P$ in $\cP$ with both ends having degree 2 in $B$ then $B \cup P$ contains $C_7$, a contradiction.
By Lemma \ref{lem: short paths} and (2) there is at most one path in $\cP$ with neither end having degree 2 in $B$, and clearly there is at most one path in $\cP$ with exactly one end having degree 2 in $B$.
So $\cP$ contains at most two paths and, since $G$ has no degree 2 vertex, it follows that there is a unique petal of $B$ with $|F| < 8$ (and $F$ contains both paths in $\cP$).
Since $F$ contains at most three edges of $C$, at least three edges of $C$ are each contained in a petal of $B$ of length at least 8; so  (2) implies that $g(B) \le 0$.
\end{proof}

%%%%%%%%%% Five-Vertex Triangular-Blocks

\section{Five-vertex triangular-blocks} \label{sec:5-vertex blocks}

In this section we analyze the possible $5$-vertex triangular-blocks of a graph $G \in \cP_n$.

\begin{lemma}
Let $G \in \cP_n$ with $n \ge 7$, and let $B$ be a triangular-block of $G$,  and assume that $G$ is drawn in the plane such that $B = B_{5,a}$ is drawn as in Figure \ref{table:5-vertex blocks}. Then $g(B) \le 0$.
\end{lemma}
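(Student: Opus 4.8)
The plan is to bound $g(B)=24f(B)-17e(B)+6n(B)$ directly, after recording the combinatorial data of $B=B_{5,a}$. We have $e(B)=7$; moreover $B$ is a near triangulation whose outer face is a $5$-cycle $C$, whose three triangular faces are facial triangles of $G$ and hence not holes, and whose pentagonal face is a hole, since $n\ge 7$ forces the rest of $G$ to lie in that region of the given drawing. Thus $B$ has exactly three non-hole faces, so $f(B)=3+\sum_{F\in\cP(B)}f_F(B)$, and $g(B)\le 0$ is equivalent to $24\sum_{F\in\cP(B)}f_F(B)+6n(B)\le 47$.

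First I would bound $n(B)$. Since $G$ has no $(18/7)$-sparse set of order one, $G$ has minimum degree at least $3$, so the two degree-$2$ vertices $a,a'$ of $B$ each have an incident edge outside $B$ and are junction vertices. If the two degree-$3$ vertices $b,b'$ of $B$ were both non-junction, then $\{b,b'\}$ would meet only the $5$ edges of $B$ incident to it, and $5<\tfrac{18}{7}\cdot 2$ would make $\{b,b'\}$ a sparse set, a contradiction; hence at least one of $b,b'$ is a junction vertex. So $B$ has at least three junction vertices and $n(B)\le\tfrac32+2=\tfrac72$, and it suffices to prove $\sum_{F\in\cP(B)}f_F(B)\le\tfrac{13}{12}$.

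To control the petals I would use Lemmas \ref{lem:paths} and \ref{lem:Hpath}: since $B$ has an $x$-$y$ Hamiltonian path for every pair $x,y\in V(C)$ that is not the pair of ends of a chord of $C$, it has $x$-$y$ paths of all lengths from $d_B(x,y)$ to $4$ for such pairs, in particular of all lengths in $\{1,2,3,4\}$ for $C$-consecutive pairs and of all lengths in $\{2,3,4\}$ for the other non-chord pairs. Concatenating such a path with the part of a petal outside $B$ and forbidding a $C_7$ then yields: (a) no petal contains both $C$-edges at $a$ or both $C$-edges at $a'$, for otherwise that vertex would have degree $2$ in $G$; hence each petal meets $C$ in a path having neither $a$ nor $a'$ as an interior vertex, leaving only finitely many petal types. (b) A petal meeting $C$ in a single edge has length at least $8$ --- a closing path of length $1$ or $2$ produces a triangle sharing an edge with $B$, which is impossible, and one of length $3$ produces a $C_7$ --- so it contributes at most $\tfrac18$. (c) A petal meeting $C$ in two edges has length $4$ or at least $8$, hence contributes at most $\tfrac12$. (d) The unique candidate petal meeting $C$ in three edges is excluded, since it would force both $b$ and $b'$ to be non-junction; and no petal meets $C$ in four edges, by the degree argument of (a). Leaky petals are handled by a short parallel argument that makes them no worse than the above. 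Since the petals collectively contain each of the $5$ edges of $C$ exactly once (the two chords of $B$ lie in no petal), $\sum_F e(F\cap B)=5$, and combining (a)--(d) gives $\sum_F f_F(B)\le\tfrac{13}{12}$ unless $\cP(B)$ consists of two non-leaky two-edge petals, each of length exactly $4$, together with one single-edge petal of length at least $8$; in that case $\sum_F f_F(B)\le\tfrac12+\tfrac12+\tfrac18=\tfrac98$.

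The main obstacle, and the step I expect to be hardest, is to eliminate this last configuration. By (a), and up to the reflective automorphism of $B_{5,a}$, the two two-edge petals meet $C$ in paths that share exactly one vertex $z\in\{a,a'\}$, and since each has length $4$ each closes up through a single new vertex; call these new vertices $w$ and $w'$, with $w$ adjacent to $z$ and to the endpoint $x\ne z$ of the first $C$-path, and $w'$ adjacent to $z$ and to the endpoint $y\ne z$ of the second. If $w=w'$, then $z$ has degree $3$ in $G$ and the set consisting of $z$ together with its two non-junction neighbours on $C$ is an $(18/7)$-sparse set of order three, a contradiction. If $w\ne w'$, then $x\,w\,z\,w'\,y$ is an $x$-$y$ path of length $4$ that is internally disjoint from the $B$-path of length $3$ from $x$ to $y$ running through the degree-$4$ vertex of $B$ and using one of its two chords; their union is a $C_7$ in $G$, again a contradiction. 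Hence the configuration cannot occur and $g(B)\le 0$. The delicate points will be the precise enumeration and length bounds for petals (including leaky ones) in (a)--(d), and checking in the final step that the length-$4$ and length-$3$ paths meet only at $x$ and $y$ in each of the two symmetric sub-cases.
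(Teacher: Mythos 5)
Your overall accounting is sound and mirrors the paper's: three junction vertices give $n(B)\le 7/2$, the reduction to $\sum_{F}f_F(B)\le 13/12$ is correct arithmetic, the non-leaky petal classification (single-edge petals have length $\ge 8$; two-edge petals have length $4$ or $\ge 8$; no petal meets $C$ in three or more edges) is right, and your elimination of the final configuration — an $(18/7)$-sparse set of order three when $w=w'$, and the explicit $7$-cycle through the two new vertices, the degree-$2$ shared vertex, and a length-$3$ path of $B$ through the degree-$4$ vertex when $w\ne w'$ — is essentially the paper's argument for the case where both short petals are non-leaky.

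The genuine gap is the sentence ``Leaky petals are handled by a short parallel argument that makes them no worse than the above.'' This is not a routine extension, and it is where the paper spends most of its effort. For a leaky petal $F$, the complement of $F\cap B$ in $F$ is not a single $B$-path but several, and the individual $B$-paths need not be long: a $B$-path whose ends are the ends of a chord of $C$ (the pairs $\{a,c\}$ and $\{a,d\}$ in the paper's labelling, $a$ being the degree-$4$ vertex) is only forbidden the lengths $4,5,6$, so lengths $2$ and $3$ survive the one-path-plus-$B$ test, and length-$2$ $B$-paths between non-adjacent non-chord pairs survive as well. Consequently a leaky petal of length $5$ or $6$ is not excluded by your arguments (a)--(d); ruling such petals out requires combining two $B$-paths (from the same petal, or from two different petals) with a path in $B$ to produce a $C_7$, which is exactly the content of the paper's cases ``both $F_1$ and $F_2$ leaky'' and ``$F_1$ non-leaky, $F_2$ leaky,'' the latter ending with a delicate analysis of the petals containing $cd$ and $de$. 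This omission also undercuts your claim that the unique configuration violating $\sum_F f_F(B)\le 13/12$ is two non-leaky length-$4$ two-edge petals plus one long single-edge petal: for instance, two length-$4$ two-edge petals together with the fifth edge lying in a leaky petal of length $5$ would give $\tfrac12+\tfrac12+\tfrac15>\tfrac{13}{12}$ and is not covered by your final elimination step. Until the leaky case is actually carried out, the proof is incomplete.
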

\begin{proof}
By Lemma \ref{lem:refinements}, each petal of $B$ with length at most six is equal to its refinement.
Lemma \ref{lem:interior path} shows that no triangular face of $B$ is a hole.
Let $C=abcdea$ be the outer cycle of $B$ with $d_B(a)=4$. Note that $B$ has at least three junction vertices, or else $\{b\}$ or $\{e\}$ or $\{c,d\}$ would be an (18/7)-sparse set in $G$. 

If at least two edges of $C$ are contained in petals of $B$ of length at least 8, then
$$g(B) \le 24(3 + 2/8 + 3/4) - 17\cdot 7 + 6(2 + 3/2) = -2.$$
So we may assume that at most one edge of $C$ is contained in a petal of $B$ of length at least 8.
This implies, up to relabeling vertices, that the edges $ab$ and $bc$ are not in a petal of length at least 8.
Let $F_1$ and $F_2$ be petals of $B$ that contain $ab$ and $bc$, respectively.

Suppose $F_1$ and $F_2$ are both leaky.
It is straightforward to check that there is a pair $(P_1, P_2)$ of length 2 $B$-paths where $P_i \subseteq F_i$, the ends of $P_i$ are not $\{a,c\}$ or $\{a,d\}$, and $P_1$ and $P_2$ have distinct sets of ends.
But then $B \cup P_1 \cup P_2$ contains $C_7$, a contradiction.
So either $F_1$ or $F_2$ is not leaky.

We claim that if $F_1$ is not leaky then $F_1\cap C=eab$ and if $F_2$ is not leaky then $F_2\cap C=bcd$. Suppose $F_1$ is not leaky. First, $F_1\cap C$ does not contain $deab$ or $abc$, as otherwise $\{e\}$ or $\{b\}$ would be an (18/7)-sparse set in $G$. Now suppose $F_1\cap C=ab$. Then since $B$ has an $a$-$b$ path of every length between 1 and $4$ and $|F|<8$, $G$ has a $C_7$, a contradiction.
Therefore $F_1\cap C=eab$, and similar argument shows that if $F_2$ is not leaky then $F_2\cap C=bcd$.

Suppose $F_1$ and $F_2$ are both non-leaky. Then by the above claim, $|F_1|=|F_2|=4$ (to avoid $C_7$ in $G$). Let $F_1=veabv$ and $F_2=wbcdw$. If $v\ne w$, we see that $B\cup F_1\cup F_2$ contains a $C_7$, a contradiction. So $v=w$. But then $\{b,c\}$ is an (18/7)-sparse set in $G$, a contradiction. 

 We may thus assume that $F_1$ is not leaky and $F_2$ is leaky, as the same argument applies to the case when $F_1$ is leaky and $F_2$ is not leaky. Hence, $F_1\cap C=eab$ and $|F_1|=4$.

   We claim that $F_2\cap C$ consists of $bc$ and $e$. For, otherwise, $F_2\cap C$ contains a $B$-path $P$ between $c$ and $d$ which is also disjoint from $F_1$ (by planarity). Note that $P$ has length between 2 and 5 and that $B\cup F_1$ has $c$-$d$ paths of every length between 1 and 5. Hence, $B\cup F_1\cup P$ contains $C_7$, a contradiction. 

   The same argument in the above paragraph shows that $ce$ is an edge of $F_2$. Moreover, the $b$-$e$ subpath of $F_2-c$ has length 2, to avoid forming a $C_7$ with a $b$-$e$ path in $B\cup F_1$. 
   Let $F_1=veabv$ and $F_2=wbcew$. If $v=w$ then $d_G(v)=2$ and $\{v\}$ is an (18/7)-sparse set in $G$, a contradiction. So $v\ne w$. 

 Now the petals of $B$ containing $cd$ and $de$ are distinct. For, otherwise, the petal containing $cde$ has length between 4 and 6, which, together with a $c$-$e$ path in $B\cup P$ forms a $C_7$, a contradiction. Moreover, the petals containing $cd$ or $de$ each contains  a $(B\cup F_1)$-path of length at least 3 with both ends in $\{c,d,e\}$. But then one of these paths has length at most 6, which forms a $C_7$ with a path in $B$, a contradiction. 
\end{proof}

\begin{lemma}
Let $G \in \cP_n$ with $n \ge 7$ and let $B$ be a triangular-block of $G$.  Suppose that $G$ is drawn in the plane such that $B = B_{5,d}$ is drawn as in Figure \ref{table:5-vertex blocks} and its only hole is its outer face. Then $g(B) \le 0$.
\end{lemma}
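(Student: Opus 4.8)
I'll follow the template of the $B_{5,a}$ case: first determine the possible petal lengths using Lemmas~\ref{lem:Hpath} and~\ref{lem:paths}, then bound $g(B)$ by a short case analysis on the petals incident with the outer cycle, invoking Lemma~\ref{lem:refinements} and the absence of small sparse sets in $G$.

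Since $B_{5,d}$ is a triangulation, all of its faces are triangles, so the hole (its outer face) is a triangle $C$; the two degree-$3$ vertices of $B_{5,d}$ are nonadjacent and each lies on three of the six faces, so every face---in particular $C$---has exactly one vertex of degree $3$ in $B$, say $z$, and two vertices of degree $4$ in $B$, say $x,y$. The two interior vertices $p,q$ of $B$ lie only on triangular faces of $G$, hence have no incident edges outside $B$ and are not junction vertices. Thus $e(B)=9$; since five of the six faces of $B$ are faces of $G$, $f(B)=5+\sum_{F\in\cP(B)}f_F(B)$; and $n(B)=n(x)+n(y)+n(z)+n(p)+n(q)=n(x)+n(y)+n(z)+2$. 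As $B_{5,d}$ is a triangulation, no exception in Lemma~\ref{lem:Hpath} occurs, so $B$ has a Hamiltonian path between any two vertices of $C$, and then Lemma~\ref{lem:paths} provides a path of $B$ of every length between $1$ and $4$ joining any two vertices of $C$. Since the interior vertices are not junction vertices, every $B$-path has both ends on $C$; concatenating such a path of length $\ell$ with a path of $B$ of length $7-\ell$ between its ends produces a $C_7$ whenever $3\le\ell\le 6$, so every $B$-path has length at most $2$ or at least $7$. Consequently a petal $F$ with $F\cap B$ a single edge has $|F|\ge 8$---the value $|F|=3$ is impossible, since then $F$ would be a triangle sharing an edge with $B$ and hence lying in $B$---so $f_F(B)\le 1/8$; and a petal $F$ with $F\cap B$ a path of two edges has $|F|=4$ or $|F|\ge 9$.

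Now consider how the three edges of $C$ are partitioned by the petals incident with $C$. Since $C$ itself cannot be a petal, either each edge of $C$ lies in its own petal, or two edges of $C$ lie in a single petal $F$ (so $F\cap B$ is a $2$-edge path $u$-$w$-$v$) while the third edge $uv$ lies in a single-edge petal $F''$. In the first case each of $x,y,z$ meets two of the three petals and is therefore a junction vertex, so $n(B)\le 3\cdot\tfrac12+2=\tfrac72$ and $\sum_F f_F(B)\le 3/8$, giving $g(B)\le 24(5+3/8)-17\cdot 9+6\cdot\tfrac72=-3$. In the second case, at each of $u$ and $v$ the two $C$-edges lie in the distinct petals $F$ and $F''$, so $u$ and $v$ are junction vertices; hence $u$-$w$-$v$ is a bad cherry and $f_F(B)=1/e(F')$. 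If $|F|\ge 9$, then $|F'|\ge 8$, so $f_F(B)\le 1/8$, $\sum_F f_F(B)\le 1/4$, $n(B)\le\tfrac12+\tfrac12+1+2=4$, and $g(B)\le 24(5+1/4)-17\cdot 9+6\cdot 4=-3$.

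The remaining case $|F|=4$ is the one I expect to cause the most trouble. Here $F=u$-$w$-$v$-$s$-$u$ with $s\notin V(B)$, the bad cherry $u$-$w$-$v$ refines $F$ to the triangle $uvs$, so $f_F(B)=1/3$ and $\sum_F f_F(B)\le 1/3+1/8$, and $w$ is not a junction vertex, since its two $C$-edges already bound the face $F$; thus $n(w)=1$. By Lemma~\ref{lem:refinements}, $su$ and $sv$ are trivial triangular-blocks. To close the argument I need $n(u),n(v)\le 1/3$, which comes from a sparse-set count: the set $\{u,w,p,q\}$ is incident with all nine edges of $B$ and, disjointly, with the $\deg_G(u)-\deg_B(u)$ edges at $u$ outside $B$ (there is no overlap since $w,p,q$ have no incident edges outside $B$), and because this set is not $(18/7)$-sparse we obtain $9+\deg_G(u)-\deg_B(u)>\tfrac{72}{7}$, so $u$ has at least two edges outside $B$; as one of these is the edge $su$, which is its own trivial triangular-block, $u$ lies in at least three triangular-blocks, so $n(u)\le 1/3$, and symmetrically $n(v)\le 1/3$. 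Therefore $n(B)\le 1/3+1/3+1+2=11/3$, and $g(B)\le 24\left(5+\tfrac13+\tfrac18\right)-17\cdot 9+6\cdot\tfrac{11}{3}=131-153+22=0$, as desired.
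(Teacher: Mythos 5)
Your proof is correct, and its skeleton is the same as the paper's: both first establish that every $B$-path has length $2$ or at least $7$, dispose of the easy configurations by the $24f-17e+6n$ count, and reduce to the hard case of a length-$4$ petal $F$ meeting the outer triangle in a bad cherry $uwv$ (so $f_F(B)=1/3$), with the remaining petal of length at least $8$; the whole lemma then hinges on showing that $u$ and $v$ each lie in at least three triangular-blocks. It is exactly at that last step that you take a genuinely different route. The paper argues by contradiction: if $v$ lies in only two triangular-blocks, then either $N_G(v)\subseteq V(B\cup F)$, making $V(B)\setminus\{u\}$ an $(18/7)$-sparse set, or $v$ has a neighbour $y$ outside $B\cup F$, and a case check on the triangular-block containing $vy$ and $vs$ produces a $v$-$u$ path of length $3$ or $4$ and hence a $C_7$. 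You instead count edges meeting the $4$-set $\{u,w,p,q\}$: since that set is not $(18/7)$-sparse and $w,p,q$ carry no edges outside $B$, the vertex $u$ must have at least two edges outside $B$; one of them is $su$, which is a trivial triangular-block by part $(iii)$ of Lemma \ref{lem:refinements}, so the other lies in a third block. This replaces the paper's ``one can check'' analysis of an auxiliary triangular-block with a short counting argument, at the cost of leaning on conclusion $(iii)$ of Lemma \ref{lem:refinements} (which the paper's proof of this lemma does not invoke). Both arguments are sound; yours is the easier one to verify.
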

\begin{proof}
 First, we observe that every $B$-path in $G$ has length $\ell$ with $\ell=2$ or $\ell \ge 7$. For, otherwise, suppose $P$ is a $B$-path in $G$ of length $\ell$ with $3\le \ell\le 6$. Note that $B$ contains a path $Q$ of length $7-\ell$ between the ends of $P$. Now $P\cup Q$ is a $C_7$ in $G$, a contradiction. 

Let $uvwu$ be the outer cycle of $B$. 
(Note that we do not specify the vertices $u,v,w$.)
Thus, all junction vertices of $B$ are contained in $\{u,v,w\}$ and, since $G$ is 2-connected, at least two of $\{u,v,w\}$ are junction vertices of $B$.  Hence, if each edge of $uvwu$ is in a petal of $B$ of length at least 8, then 
$$g(B) \le 24(5 + 3/8) - 17\cdot 9 + 6(3 + 2/2) = 0,$$
so we may assume that there is a petal $F$ of $B$ with $|F| < 8$. 

We claim that $|F|=4$ and $F$ has a bad cherry (so $f_F(B)=1/3$). To see this, let $P$ be a longest $B$-path contained in $F$. By the observation above, $P$ has length 2 (as $|F|<8$). Since $B$ is a triangular-block, $P$ and two edges of $uvwu$ bounds the face $F$.   So $|F| = 4$, and $F\cap B$ is a bad cherry. Without loss of generality, assume that $F\cap B=vwu$.

We next consider the petal $F_1$ of $B$ containing $uv$.
Note that $F_1-uv$ is a $B$-path, because $w$ is in the interior of $F$.
Hence, since $|F_1|\ge 4$, $F_1-uv$ has length at least 7 (by the above observation). So $|F_1| \ge 8$ (and $f_{F_1}(B)\le 1/8$).

We now show that $u$ and $v$ are each contained in at least three triangular-blocks of $G$. For, suppose, without loss of generality, that $v$ is in exactly two triangular-blocks of $G$. Let $x$ be the vertex in $F-\{u,v,w\}$. 
If $N_G(v)\subseteq V(B\cup F)$,  then $V(B)\setminus \{u\}$ is an (18/7)-sparse set in $G$, a contradiction. So $v$ has a neighbor outside $B\cup F$, say $y$.
Then $vy$ and $vx$ are in some triangular-block $B'$ of $G$. 
One can check that $B'$ or $G[V(B')\cup \{u\}]$ contains a $v$-$u$ path of length of 3 or 4, which together with a $u$-$v$ path in $B$ forms a $C_7$ in $G$, a contradiction.  
Therefore, 
$$g(B) \le 24(5 + 1/8 + 1/3) - 17\cdot 9 + 6(3 + 2/3) = 0,$$
as desired.
\end{proof}

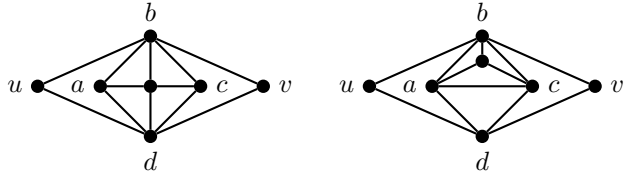
\begin{figure} \label{exceptions}
\begin{center}
\begin{tabular}{cc}
    \begin{tikzpicture}
     \node[label = right: {$c$}] (1) at (0:2\R/3) {};
     \node[label = above: {$b$}] (2) at (90:2\R/3) {};
     \node[label = left: {$a$}] (3) at (180:2\R/3) {};
     \node[label = below: {$d$}] (4) at (270:2\R/3) {};
     \node[label = right: {$v$}] (5) at (0:1.5\R) {};
     \node[label = left: {$u$}] (6) at (180:1.5\R) {};
     \node (7) at (0:0) {};
     \draw (1) -- (2);
\draw (2) -- (3);
\draw (3) -- (4);
\draw (1) -- (4);
\draw (2) -- (5);
\draw (4) -- (5);
\draw (2) -- (6);
\draw (4) -- (6);
\draw (1) -- (7);
\draw (2) -- (7);
\draw (3) -- (7);
\draw (4) -- (7);
\end{tikzpicture}  
&
\begin{tikzpicture}
     \node[label = right: {$c$}] (1) at (0:2\R/3) {};
     \node[label = above: {$b$}] (2) at (90:2\R/3) {};
     \node[label = left: {$a$}] (3) at (180:2\R/3) {};
     \node[label = below: {$d$}] (4) at (270:2\R/3) {};
     \node[label = right: {$v$}] (5) at (0:1.5\R) {};
     \node[label = left: {$u$}] (6) at (180:1.5\R) {};
     \node (7) at (90:\R/3) {};
     \draw (1) -- (2);
\draw (2) -- (3);
\draw (3) -- (4);
\draw (1) -- (4);
\draw (2) -- (5);
\draw (4) -- (5);
\draw (2) -- (6);
\draw (4) -- (6);
\draw (1) -- (3);
\draw (1) -- (7);
\draw (2) -- (7);
\draw (3) -- (7);
\end{tikzpicture}
\end{tabular}
\end{center}
\caption{The two exceptional flowers.}
\end{figure}

\begin{lemma}
Let $G \in \cP_n$ with $n \ge 7$ and let $B$ be a triangular-block of $G$. Suppose that $G$ is drawn in the plane such that $B \in \{ B_{5,b}, B_{5,c}\}$ is drawn as in Figure \ref{table:5-vertex blocks}. Then $g(B) \le 0$ unless the flower of $B$ is one of the two graphs shown in Figure \ref{exceptions}, up to redrawings that preserve facial cycles.
\end{lemma}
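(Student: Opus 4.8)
The plan is to follow the pattern of the preceding lemmas: compute the contribution of $B$ to $24f-17e+6n$, reduce to the case that $B$ has a short petal, classify such petals, and in the end either bound $g(B)$ or recognise one of the flowers of Figure~\ref{exceptions}. Both $B_{5,b}$ and $B_{5,c}$ have $e(B)=8$ and a unique non-triangular face, a $4$-cycle $C$. By Lemma~\ref{lem:interior path} no interior triangular face of $B$ is a hole, and $C$ must itself be a hole, since otherwise the five faces of $B$ tile the sphere and $G=B$, impossible as $n\ge7$. Redrawing $G$ so that $C$ is the outer cycle of $B$, the unique interior vertex of $B$ is a non-junction, every edge of $C$ lies in exactly one petal and every interior edge of $B$ in none (so $\sum_{F\in\cP(B)}e(F\cap B)=4$), and no facial cycle of $G$ has length $7$. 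As $B\notin\{B_{5,d},B_{6,g},B_{6,h},B_{7,b}\}$, Lemma~\ref{lem:refinements} gives $F'=F$, hence $f_F(B)=e(F\cap B)/|F|$, for every petal with $|F|\le6$; so with $f(B)=4+\sum_{F}f_F(B)$,
$$g(B)=-40+24\sum_{F\in\cP(B)}f_F(B)+6\,n(B).$$
Since $G\in\cP_n$ has no $(18/7)$-sparse set of order $\le4$, small sparse sets built from the interior vertex of $B$ (and, for $B_{5,c}$, from its degree-$2$ vertex) show that $B$ has at least two junction vertices, antipodal on $C$ when exactly two; hence $n(B)\le4$ always and $n(B)\le7/2$ once there are three. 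If every petal has length $\ge8$ then $\sum_F f_F(B)\le4/8$ and $g(B)\le-40+12+6n(B)\le-4<0$, so we may assume some petal $F$ has $|F|\in\{4,5,6\}$.

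The heart of the matter is classifying short petals via Lemmas~\ref{lem:paths} and~\ref{lem:Hpath}. First, no short petal is leaky: if $F\cap B$ were a union of two arcs of $C$, then splicing the two external subpaths of $F$ with two vertex-disjoint paths of $B$ of prescribed lengths (supplied by Lemmas~\ref{lem:paths} and~\ref{lem:Hpath}) produces a $C_7$. So a short petal meets $B$ in a single arc $P^\ast\subseteq C$ and equals $P^\ast\cup Q$ with $Q$ a $B$-path of length $\ge2$ between the ends of $P^\ast$; combining $Q$ with the $B$-paths of every length in $[d_B(\cdot,\cdot),\,|B|-1]=[d_B,4]$ between those ends and invoking $C_7$-freeness forces $|Q|=2$, so either $|F|=4$ with $e(F\cap B)=2$ and the ends of $P^\ast$ antipodal on $C$, or $|F|=5$ with $e(F\cap B)=3$. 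I would exclude the length-$5$ case: whichever $3$-edge arc of $C$ is used, its two interior vertices become non-junctions, and together with the interior vertex of $B$ they form an $(18/7)$-sparse set of order $2$ or $3$ (for $B_{5,c}$, one such arc-vertex is the degree-$2$ vertex, which would then have degree $2$ in $G$). The same reasoning shows that for $B_{5,c}$ a short petal never joins the pair of vertices spanned by the chord. Since also $e(F\cap B)\le3$ always and a petal meeting $B$ in a single edge of $C$ has length $\ge8$, every short petal is a $4$-cycle meeting $B$ in a $2$-edge arc whose ends form an antipodal pair of $C$.

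To conclude, put $S=\sum_{F}f_F(B)$; long petals contribute at most $e(F\cap B)/8$ and short petals contribute $\tfrac12$, while $\sum_F e(F\cap B)=4$. So $S>29/40$ can happen only if $C$ is covered by exactly two short petals $F_1,F_2$, and then their $2$-edge arcs must be the two arcs of $C$ joining one common antipodal pair $\{x,y\}$, with external vertices $t_1\ne t_2$ (if $t_1=t_2$ that vertex has degree $2$ in $G$). A direct check then gives that $\{x,y\}$ are precisely the two junction vertices of $B$, the other three vertices are non-junctions, each $t_i$ has degree $2$ in the flower, and therefore the flower of $B$ is exactly one of the two graphs of Figure~\ref{exceptions}: the first flower if $B=B_{5,b}$, the second if $B=B_{5,c}$. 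In every other configuration $S\le29/40$; combined with $n(B)\le4$, and with the observation that any configuration forcing $S$ close to $1$ other than the one above either produces a third junction vertex (so $n(B)\le7/2$) or yields a $C_7$ using vertices a companion petal of length $\ge8$ must contain, this gives $g(B)\le0$.

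I expect the last two steps to be the real obstacle: carefully ruling out the leaky and the length-$5$ petals, handling the coincidences (such as $t_1=t_2$) through small sparse sets, and---in the borderline case of a single size-$4$ short petal alongside a forced long petal---extracting a $C_7$ from the extra vertices that long petal is forced to carry. Keeping the two antipodal pairs of $B_{5,c}$ (the one spanned by the chord and the chordless one) apart is what makes that case heavier than the $B_{5,b}$ case.
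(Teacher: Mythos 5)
Your overall strategy is the same as the paper's (bound $g(B)$ via the petal contributions, reduce to short petals, classify them, and recognize the exceptional flowers when two antipodal $4$-cycle petals appear), and most of your non-leaky analysis, including the exclusion of length-$5$ petals by sparse sets and of the chord pair for $B_{5,c}$, is sound. However, there is a genuine gap at the step ``no short petal is leaky.'' This claim is false, and the configuration you dismiss is exactly the one the paper has to work hardest on. Concretely, take $B=B_{5,b}$ with outer cycle $abcda$, a length-$2$ $B$-path $P=awb$ with $w\notin V(B)$, and a chord $P'=bd$ drawn in the exterior so that the triangle $abda$ separates $w$ from the rest of $G$; then $F=awbda$ is a petal of length $4<8$ with $F\cap B$ equal to the edge $da$ together with the isolated vertex $b$, i.e.\ a leaky short petal. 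Your proposed splicing argument cannot produce a $C_7$ here: Lemmas \ref{lem:paths} and \ref{lem:Hpath} supply a single path of each length, not two vertex-disjoint paths with prescribed lengths, and in any case $B\cup F$ has only six vertices, so it contains no $C_7$ at all. The paper does not rule this configuration out; instead it shows that the petals containing $ab$ and $ad$ are then forced to have length at least $8$ (and that $a,b,d$ are all junction vertices), which yields $g(B)\le -1$. Your accounting, which assumes every short petal is a $4$-cycle meeting $B$ in a $2$-edge antipodal arc, does not cover this case, so the proof as written is incomplete.

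A secondary, smaller issue is that your closing numerical threshold does not close the single-short-petal case on its own: with one short petal and two long ones you get $S=3/4$, and with $n(B)=4$ this gives $g(B)=2>0$, so you must show either a third junction vertex or that both junction vertices lie in at least three triangular-blocks. You do flag this, but only as a sketch; the paper's actual argument (if a junction vertex $b$ lies in only two triangular-blocks, the block containing $bv$ and an outside neighbour $bz$ supplies a short $b$--$v$ path that completes a $C_7$ with $B\cup P$) is the substantive content there and would need to be carried out.
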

\begin{proof}
By Lemma \ref{lem:refinements}, each petal of $B$ with length at most six is equal to its refinement.
Lemma \ref{lem:interior path} shows that the only hole of $B$ is its outer face.
Let $C=abcda$ be the outer cycle of $B$, with $d_B(d)=2$ if $B=B_{5,c}$. Note that no petal of $B$ has a bad cherry, as otherwise it is easy to see that $B=B_{5,c}$ and one or two vertices of $B$ would form an (18/7)-sparse set in $G$.
Since $G$ is 2-connected, $B$ has at least 2 junction vertices. Thus, if each edge of $C$ is in a petal of $B$ of length at least 8, then 
$$g(B) \le 24(4 + 4/8) - 17 \cdot 8 + 6(3 + 2/2) = -4.$$
So we may assume that there is a petal $F$ of $B$ with $|F| < 8$.

Thus, $F$ contains a $B$-path $P$ of length between 2 and 5. To avoid forming $C_7$ with a path in $B$, $P$ has length 2 if the ends of $P$ do not induce a chord of $C$, and  $P$ has length 2 or 3 otherwise.   

Suppose that we may choose $P$ so that the ends of $P$ are adjacent on $C$. 
Then $P$ has length 2 to avoid forming a $C_7$ with $B$, and, since $B$ is a triangular-block, $F$ is a leaky petal.  Therefore, $F$ contains a second $B$-path $P'$ which does not have the same two ends as $P$, and hence $B$ has at least three junction vertices. Note that $B\cup P$ is a near triangulation on 6 vertices, and also note by the above paragraph that $P'$ has length at most 3. 
It is straightforward to check that $P'$ has length 1, or else $B \cup P'$ or $B \cup P \cup P'$ would contain $C_7$. Thus, if $B=B_{5,c}$ then $P'\ne ac$. Hence, without loss of generality we may assume that $P'=bd$, and that the triangle $abda$ separates $B$ from some vertex of $G$.
Then the petal of $B$ containing $ab$ either contains an $a$-$b$ path of length at least 7 or a $b$-$d$ path of length at least 6 to avoid forming a $C_7$ with $B$, and in either case has length at least 8.
Similarly, the petal of $B$ containing $ad$ has length at least 8.
Therefore
$$g(B) \le 24(4 + 2/8 + 2/4) - 17\cdot 8 + 6(2 + 3/2) = -1,$$
as desired.

So we may assume that $F$ contains no $B$-path with length between 2 and 5 whose ends are adjacent on $C$.
Then $F$ is not a leaky petal.
If $B = B_{5,c}$ and $P$ is an $a$-$c$ path, then $G$ has a sparse set consisting of 1 or 2 vertices from $V(B)\setminus \{a,c\}$, a contradiction.
So we may assume without loss of generality that $P$ is a $b$-$d$ path (so $P$ has length 2 to avoid forming a $C_7$ with $B$) and that $F$ is the cycle $bcdvb$, where $v$ is the internal vertex of $P$.

If $B$ has three junction vertices, then the petal $F_1$ of $B$ containing $ab$ also contains a $B$-path of length at least 2 with ends $a$ and $b$ or $a$ and $d$.
Since such a path has length at least 7 (to avoid forming a $C_7$ with $B$), $|F_1| \ge 8$.
Similarly, the petal of $B$ containing $ad$ has length at least 8.
Then
$$g(B) \le 24(4 + 2/8 + 2/4) - 17\cdot 8 + 6(2 + 3/2) = -1.$$

So we may assume that $B$ has exactly two junction vertices, namely $b$ and $d$.
Then the petal $F_1$ of $B$ containing the path $dab$ is the union of $dab$ and a $B$-path $Q$ in $G$ between $d$ and $b$. As with $P$ and $F$,
$Q$ has length 2 or at least 6 to avoid creating $C_7$ with $B$.  
If $Q$ has length 2, then $v \notin V(Q)$ and the flower of $B$ is shown in Figure \ref{exceptions}.

So assume $Q$ has length at least 6. Then $|F_1| \ge 8$. 
If $b$ and $d$ are each in at least three triangular-blocks of $G$ then 
$$g(B) \le 24(4 + 2/8 + 2/4) - 17\cdot 8 + 6(3 + 2/3) = 0.$$
So assume by symmetry that $b$ is in exactly two triangular-blocks of $G$.
If $N_G(b)\subseteq V(B\cup P)$ then $V(B-d)$ is an (18/7)-sparse set in $G$, a contradiction.
So $b$ has a neighbor $z$ outside $B\cup P$. 
Then $bv$ and $bz$ are in the same triangular-block $B'$ of $G$. Now $B'$ contains a $b$-$v$ path of length between 2 and 6, which is also a $(B \cup P)$-path. However, this path and a $b$-$v$ path in $B\cup P$ form a $C_7$ in $G$, a contradiction.
\end{proof}

Finally, for the triangular-blocks with flower shown in Figure \ref{exceptions}, we note that each edge of its outer cycle is contained in a facial cycle of length at least 5, to avoid a $C_7$ in $G$. Thus, we have the following. 

\begin{lemma} \label{lem:exceptional blocks}
Let $G \in \cP_n$ with $n \ge 7$.
Let $B$ be a triangular-block of $G$ with flower shown in Figure \ref{exceptions}, and let $e \in E(G) \setminus  E(B)$ be in a petal of $B$.
Then $e$ is incident with exactly one face of $G$ of length less than five.
In particular, $e$ is a trivial triangular-block.
\end{lemma}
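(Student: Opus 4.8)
The plan is to work from the explicit structure of the two flowers in Figure \ref{exceptions} and to use the fact that $B$ (which is $B_{5,b}$ or $B_{5,c}$ together with the extra apex vertices $u,v$) contains short paths of every relevant length between any two of its outer-cycle vertices. Concretely, I first recall from the proofs of the two preceding lemmas that the flower of $B$ consists of $B$ itself together with exactly two petals of length $4$ — the petal $F$ through $bc,cd$ and the petal $F_1$ through $da,ab$ — each a quadrilateral obtained by adding one apex vertex ($v$, respectively $u$) adjacent to two consecutive outer vertices of $B$. Thus the edges of $E(G)\setminus E(B)$ lying in petals of $B$ are precisely $vb, vc, dv$ and $ua, ub$ (in the $B_{5,c}$ case there is also the chord $ac$, but that lies in $E(B)$, not in a petal). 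Note that the outer cycle of $B$ is the $4$-cycle $C=abcda$, and each of $b,d$ is a junction vertex while $a,c$ need not be.

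The key step is the following claim: for each such edge $e$, say $e=vb$ (the others are symmetric by the obvious automorphisms of the pictures), if $e$ were incident with a second face $F^\ast$ of $G$ of length at most $4$, then $F^\ast$ would contain a $B$-path $P$ with one end $v$ (really a path internally disjoint from $B\cup F$) of length at most $3$ joining $v$ to some outer vertex of $B$ other than $b$ — or a chord among $\{b,c,d\}$ or among the apexes — and then concatenating $P$ with an appropriate short path through $B$ (using that, for instance, $B$ has a $b$-$c$ path, a $b$-$d$ path, and a $b$-$a$ path of every length between $1$ and $4$, as one reads off the figures, and similarly $B\cup F$ has short paths from $v$) produces a cycle of length exactly $7$ in $G$, contradicting $C_7$-freeness. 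The bookkeeping here is a finite case check over: which other vertex of $\{a,b,c,d,u,v\}$ the path $P$ reaches, the length of $P$ (either $1$, $2$, or $3$, since $|F^\ast|\le 5$), and on which side of $C$ the relevant region lies; in every case the complementary length $7-|P|$ is realized by a path inside $B\cup F$ or $B\cup F_1$.

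Once the claim is established, $e$ is incident with at most one face of $G$ of length less than five, and since $e\in E(G)$ and $e$ is not in a facial triangle (it is an edge of a $4$-face $F$ or $F_1$ of $B$ that is not a face of $G$… more precisely, $e$ lies on no facial triangle of $G$, as the only triangles at $e$ would have to live in a petal, but the petals here are $4$-cycles with no chords), the triangular-block $B(e)$ has just two vertices, i.e. $e$ is a trivial triangular-block. I expect the main obstacle to be organizing the case analysis cleanly: one must be careful that a short face $F^\ast$ through $e$ could in principle wrap around so that the ``$B$-path'' it contributes has both ends in $B$ rather than one end at an apex, and that a chord such as $bd$ or $ac$ or $uv$ could be hidden inside $F^\ast$ — but each of these possibilities again yields a $7$-cycle (or an $(18/7)$-sparse set of order at most two, which is excluded since $G\in\cP_n$), so the argument closes in all cases.
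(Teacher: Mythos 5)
Your overall approach is exactly the paper's: the paper's entire justification is the remark preceding the lemma, namely that each edge of the flower's outer cycle must lie in a facial cycle of length at least five to avoid a $C_7$, and your plan of completing a hypothetical second short face at $e$ to a $7$-cycle via a path of complementary length inside the flower is that same argument made explicit. However, you have misread Figure \ref{exceptions}, and this corrupts the concrete case analysis you propose to run. In both exceptional flowers the apex vertices $u$ and $v$ are each adjacent to $b$ and $d$, the two junction vertices, which are \emph{opposite} (not consecutive) on the outer cycle $abcda$ of $B$; the two petals are the $4$-cycles $b\,a\,d\,u\,b$ and $b\,c\,d\,v\,b$, and the edges of $E(G)\setminus E(B)$ lying in petals are exactly $ub,\ ud,\ vb,\ vd$. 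Your list ``$vb, vc, dv, ua, ub$'' contains the non-edges $vc$ and $ua$ and has the wrong cardinality (the theorem in Section \ref{sec:2-connected case} depends on there being exactly four such trivial blocks per exceptional flower). With the correct edges the verification goes through as you describe: if the non-petal face at, say, $vb$ had length $3$ or $4$, its boundary minus $vb$ would be a $v$--$b$ path of length $2$ or $3$ with at most two internal vertices, each either outside the flower or in $\{u,d\}$, and in every subcase the flower supplies a disjoint completing path of the complementary length (for instance $v\,d\,a\,w\,b$ and $v\,d\,a\,w\,c\,b$ in the first flower, where $w$ is the hub, with the analogous paths through the chord $ac$ in the second), yielding a $C_7$; the cases where the short face reuses $u$ or the chord $bd$ are exactly the wrap-around cases you flag, and each also closes to a $C_7$ (e.g.\ $uv\in E(G)$ forces the $7$-cycle $u\,v\,d\,a\,w\,c\,b\,u$). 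Since both faces at $e$ then have length at least four, $e$ lies in no facial triangle and $B(e)$ is trivial. So: right method, but fix the identification of the petal edges before writing out the cases.
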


%%%%%%%%%% Small Triangular-Blocks

\section{Small Triangular-Blocks} \label{sec:small blocks}

In this section we analyze the triangular-blocks with at most four vertices in a graph $G \in \cP_n$.
We first consider trivial triangular-blocks.

\begin{lemma}
Let $G \in \cP_n$ with $n \ge 7$ and let $B$ be trivial triangular-block of $G$. Then $g(B) \le 0$.
\end{lemma}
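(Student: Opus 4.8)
The plan is to compute $g(B) = 24f(B) - 17e(B) + 6n(B)$ directly for a trivial triangular-block $B$, say with $V(B) = \{u,v\}$ and edge $e = uv$. Clearly $e(B) = 1$. Since $G$ is $2$-connected and $e$ lies in no facial triangle, $e$ is incident with exactly two faces $F_1,F_2$ of $G$, each of length at least $4$, and these are precisely the petals of $B$; moreover the single face of the planar $K_2$ is a hole, so $B$ has no non-hole face. As $F_i\cap B$ is just the edge $e$ (in particular not a bad cherry), $f(B) = 1/|F_1'| + 1/|F_2'|$, where $F_i'$ is the refinement of $F_i$. For $n(B)$: every edge of $G$ at $u$ other than $e$ lies in a triangular-block different from $B$ (else $B$ would have a third vertex), and $G$ has minimum degree at least $3$ (a vertex of degree at most $2$ would be a $(18/7)$-sparse singleton), so $u$, and likewise $v$, lies in at least two triangular-blocks; hence $n(u),n(v)\le 1/2$ and $n(B)\le 1$. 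Thus $g(B)\le 24(1/|F_1'| + 1/|F_2'|) - 11$, so it suffices to show $1/|F_1'| + 1/|F_2'|\le 11/24$, together with a small improvement on $n(B)$ in one borderline case.

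Next I would restrict the values $|F_i'|$ can take. Since $G$ is $C_7$-free no facial cycle has length $7$, and $|F_i|\ge 8$ implies $|F_i'|\ge 8$; by Lemma \ref{lem:refinements}, if $4\le |F_i|\le 6$ and $F_i\ne F_i'$ then $|F_i| = 4$. Hence $|F_i'|\le 4$ forces $|F_i| = 4$, and if moreover $|F_i'| = 3$ then $F_i$ carries a bad cherry, so applying Lemma \ref{lem:refinements}(iii) to the nontrivial block realizing it (noting that $e$ must be one of the two edges $wx_1,wx_3$ occurring there, since the other two edges of the $4$-cycle $F_i$ lie in that nontrivial block) shows $e$ is incident with exactly one face of length less than eight, whence $|F_{3-i}|\ge 8$. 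With these facts the estimate $1/|F_1'| + 1/|F_2'|\le 11/24$ is immediate in every case but one: if both $|F_i'|\ge 8$ the sum is at most $1/4$; if $\{|F_1'|,|F_2'|\} = \{3,m\}$ with $m\ge 8$ the sum is $1/3+1/m\le 11/24$; and if one $|F_i'| = 4$ (so the other is not $3$ or $4$, hence at least $5$) or both $|F_i'|$ lie in $\{5,6\}$, the sum is at most $\max\{1/4+1/5,\,2/5\} < 11/24$.

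The remaining case, which is the main obstacle, is $|F_1'| = |F_2'| = 4$: then $e$ lies in two $4$-faces $F_1 = uvx_1y_1u$ and $F_2 = uvx_2y_2u$, and the bound above only gives $g(B)\le 1$. If $x_1,y_1,x_2,y_2$ are distinct, then $y_1uy_2x_2vx_1y_1$ is a $6$-cycle, and if the edge $x_1y_1$ lies in a facial triangle $x_1y_1t$ of $G$ with $t\notin\{u,v,x_2,y_2\}$, then $tx_1vx_2y_2uy_1t$ is a $C_7$, a contradiction; so, degenerate identifications among the $x_i,y_i$ aside, each of $x_1y_1$ and $x_2y_2$ is either trivial or lies in a facial triangle using one of $u,v,x_{3-i},y_{3-i}$. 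In the surviving configurations I would finish by showing $n(B)\le 5/6$ — equivalently that $u$ or $v$ lies in at least three triangular-blocks — since then $g(B)\le 24\cdot\tfrac12 - 17 + 6\cdot\tfrac56 = 0$. For this, assuming $u$ lay only in $B$ and one nontrivial block $B_u$, the edges $uy_1$ and $uy_2$ would both lie in $B_u$; being separated at $u$ by $e$ and its two incident non-triangular $4$-faces, the chain of facial triangles joining them in $B_u$ must wrap around $u$, forcing $\deg_G(u)\ge 4$ and a tightly constrained fan around $u$, from which I expect to extract a $(18/7)$-sparse set of order at most $4$ (or yet another $C_7$), contradicting $G\in\cP_n$. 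Carrying this out uniformly across the surviving configurations is the delicate part; I expect it to be short once organized around the classification of blocks in Lemma \ref{lem:all blocks}.
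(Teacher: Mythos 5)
Your setup and all of the ``easy'' cases are correct and essentially match the paper: $e(B)=1$, $f(B)=1/e(F_1')+1/e(F_2')$, $n(B)\le 1$, the use of Lemma \ref{lem:refinements} to control petals whose refinement is proper (your observation that when $|F_i'|=3$ the edge $uv$ must be one of the two trivial edges $wx_1,wx_3$ in Lemma \ref{lem:refinements}$(iii)$, forcing $|F_{3-i}|\ge 8$ and the exact value $1/3+1/8=11/24$, is in fact more careful than the paper's own computation at that step), and the reduction to the configuration where both petals are $4$-faces and, via the $n(B)\le 5/6$ improvement, where both $u$ and $v$ lie in exactly two triangular-blocks.

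The genuine gap is that this last configuration --- which is the entire substance of the lemma --- is not actually resolved. You explicitly defer it (``I expect to extract a $(18/7)$-sparse set \dots or yet another $C_7$''; ``carrying this out uniformly across the surviving configurations is the delicate part''), and the configurations you set aside as ``degenerate identifications among the $x_i,y_i$'' are not degenerate at all: the case $|V(F_1\cap F_2)|=3$ (the two quadrilateral petals sharing a third vertex) is one of the two main cases in the paper and requires its own $C_7$ construction. The paper finishes by splitting on $|V(F_1\cap F_2)|\in\{2,3\}$: when the petals share only $u,v$, it uses that $\{u,v\}$ is not $(18/7)$-sparse to get a vertex of degree at least $4$, looks at the unique second triangular-block at that vertex and its two facial triangles containing the edges into the two petals, and exhibits an explicit $C_7$; when they share a third vertex, it uses Lemmas \ref{lem:Hpath} and \ref{lem:paths} inside the block bounded by the resulting triangle to produce a length-$3$ path that closes into a $C_7$. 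Your partial observation (the $C_7$ $tx_1vx_2y_2uy_1t$ when $x_1y_1$ sits in a facial triangle on a new vertex) is correct but covers only one sub-configuration, and your proposed ``fan around $u$'' argument is a plan, not a proof. As written, the lemma is not established.
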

\begin{proof}
Let $x_1$ and $x_2$ be the two vertices of $B$, and let $F_1$ and $F_2$ be the two petals of $B$.
Note that $|F_i| \ge 4$ for each $i \in [2]$ since $B$ is a triangular-block.
Each of $x_1$ and $x_2$ is in at least two triangular-blocks, so $n(B) \le \frac{1}{2} + \frac{1}{2} = 1$.

We may assume that for each $i\in [2]$, $|F_i|=4$, $F_i$ equals its refinement, and $x_i$ is contained in exactly two triangular-blocks of $G$ (including $B$). First, if the refinement of $F_1$ is not $F_1$ then Lemma \ref{lem:refinements} implies that $|F_1|\ge 8$ and $|F_2|\ge 4$, or $|F_1| = 4$ and $|F_2| \ge 8$; so 
$$g(B) \le 24(1/8 + 1/4) - 17 + 6(1/2+1/2) = -2.$$
So we may assume that $F_1$ and $F_2$ are both equal to their refinements.
Next, if $|F_1| \ge 5$ or $|F_2|\ge 5$ then 
$$g(B) \le 24(1/4 + 1/5) - 17 + 6(1/2 + 1/2) = -1/5,$$ so we may assume that $|F_1| = |F_2| = 4$. 
Now assume $x_1$ or $x_2$ is in at least three triangular-blocks of $G$. Then $n(B) \le \frac{1}{2} + \frac{1}{3}$ and again 
$$g(B) \le  24(1/4 + 1/4) - 17 + 6(1/2 + 1/3) = 0.$$

So both $x_1$ and $x_2$ are each contained in exactly 2 triangular-blocks.
Let $x_3$ and $x_4$ be the vertices of $F_1$ other than $x_1$ and $x_2$, such that $F_1=x_1x_2x_3x_4x_1$. 
\medskip

{\it Case} 1. $V(F_1\cap F_2)=\{x_1,x_2\}$. 

Let $x_5$ and $x_6$ be the other two vertices of $F_2$, such that $F_2=x_1x_2x_5x_6x_1$. Now $d_G(x_i)\ge 4$ for some $i\in [2]$, as $\{x_1,x_2\}$ is not an (18/7)-sparse set. By symmetry, assume $d_G(x_2)\ge 4$. Consider the triangular-block $B'$ of $G$ such that $B'$ contains $x_3x_2x_5$, and let $F_3$ and $F_5$ be the facial triangles of $B'$ containing $x_2x_3$ and $x_2x_5$, respectively.
If $F_3$ contains a third vertex in $\{x_1,x_2,x_3,x_4,x_5,x_6\}$, it must be $x_6$ to avoid an (18/7)-sparse set of order 1. 
But then $F_5$ contains a vertex $v$ with $v\notin \{x_1,x_2,x_3,x_4,x_5,x_6\}$, and
$x_2vx_5x_6x_1x_4x_3x_2$ is a $C_7$ in $G$, a contradiction.

\medskip

{\it Case} 2. $V(F_1\cap F_2)\ne \{x_1,x_2\}$ 

Then $|V(F_1\cap F_2)|=3$.
Without loss of generality we may assume that $x_4\in V(F_2)$.
Let $x_5$ be the fourth vertex of $F_2$; then $F_2 = x_1x_2x_4x_5x_1$.
Note that $x_4x_1x_5x_4$ bounds a triangular-block $B_1$ of $G$ since $x_1$ is in exactly 2 triangular-blocks of $G$. 
Since $F_2$ is a facial cycle and $\{x_5\}$ cannot be a sparse set in $G$, $|B_1|\ge 4$.
Since $x_1x_4$ in an edge on the outer cycle of $B_1$, Lemmas \ref{lem:Hpath} and \ref{lem:paths} imply that $B_1$ contains an $x_1$-$x_4$ path $P$ of length 3. 
Note that the path $x_3x_2x_4$ is contained in a triangular-block $B_2$ of $G$, and the facial 
triangle of $B$ containing $x_2x_3$ 
contains a vertex $v\notin \{x_1,x_2,x_3,x_4,x_5\}$ as $\{x_3\}$ cannot be an (18/7)-sparse set of $G$. But then $P\cup x_1x_2vx_3x_4$ is a $C_7$ in $G$, a contradiction.
\end{proof}

\begin{lemma}
Let $G \in \cP_n$ with $n \ge 7$ and let $B$ be a triangular-block of $G$.
If $B = B_3$ as in Figure \ref{table:small blocks}, then $g(B) \le 0$.
\end{lemma}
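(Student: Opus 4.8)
The plan is to bound $g(B) = 24f(B) - 17e(B) + 6n(B)$ directly, using $e(B) = 3$. The first step is to pin down $n(B)$. Every vertex $v$ of $B$ must be a junction vertex: otherwise $v$ lies in no triangular-block besides $B$, so every edge at $v$ lies in $B$ and $\deg_G(v) = 2$, making $\{v\}$ a $(18/7)$-sparse set and contradicting $G\in\cP_n$. As $B$ is nontrivial, each $v\in V(B)$ therefore satisfies $\deg_G(v)\ge 3$ and $n(v)\le 1/2$, so $n(B)\le 3/2$. Hence $g(B)\le 24f(B) - 51 + 9 = 24\,(f(B) - 7/4)$, and it suffices to prove $f(B)\le 7/4$.

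The second step is to control the petals of $B$. Since $B$ is a single facial triangle of $G$, its only non-hole face is that triangle, so $f(B) = 1 + \sum_{F\in\cP(B)} f_F(B)$. Each of the three edges of $B$ lies in exactly one petal of $B$, namely the face of $G$ on the non-triangle side of that edge, and I claim these three petals are distinct. If the petals containing two edges $e,e'$ of $B$ coincided in a facial cycle $F$, then, as any two edges of the triangle $B$ share a vertex, $e$ and $e'$ would be the two edges of $B$ at some $v\in V(B)$; but then $F$ is the face meeting $v$ on the side opposite the facial triangle, forcing $\deg_G(v) = 2$, a contradiction. Thus $|\cP(B)| = 3$, and each $F\in\cP(B)$ has $e(F\cap B) = 1$, so $F\cap B$ is not a bad cherry and $f_F(B) = 1/e(F')$, where $F'$ is the refinement of $F$.

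It remains to show $e(F')\ge 4$ for each petal $F$ of $B$; this gives $f(B)\le 1 + 3\cdot\tfrac14 = \tfrac74$ and finishes the proof. If $|F|\ge 8$ then $|F'|\ge 8$; since $G$ is $C_7$-free, $|F|\ne 7$; and if $4\le|F|\le 6$ then Lemma~\ref{lem:refinements} applies to the petal $F$ of $B$, and since $B = B_3\notin\{B_{5,d}, B_{6,g}, B_{6,h}, B_{7,b}\}$ its conclusion $(i)$ fails, so $F = F'$ and $e(F') = |F|\ge 4$. I expect the main obstacle to be the petal bookkeeping of the second step together with the verification that no petal of the three-vertex block $B_3$ can carry a bad cherry — equivalently, that the exceptional configurations in Lemma~\ref{lem:refinements} cannot attach to $B_3$. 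Intuitively, a bad cherry on a petal of $B_3$ that used an edge of $B_3$ would force one of the three vertices of $B_3$ to have degree two in $G$, while the remaining possibilities are ruled out by the form of Lemma~\ref{lem:refinements} and the absence of small sparse sets. Once this is settled, the inequality $g(B)\le 0$ is immediate.
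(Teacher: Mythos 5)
Your proof is correct and follows essentially the same route as the paper's: each vertex of $B_3$ must be a junction vertex (else a degree-$2$ vertex gives an $(18/7)$-sparse singleton), so $n(B)\le 3/2$, and each petal has refinement of length at least $4$ by Lemma~\ref{lem:refinements} together with $C_7$-freeness, giving $g(B)\le 24(1+3/4)-51+9=0$. The paper's version is just a terser statement of the same argument; your extra bookkeeping about the three petals being distinct is harmless but not needed, since $\sum_F e(F\cap B)=3$ and $e(F')\ge 4$ already give $\sum_F f_F(B)\le 3/4$ however the edges distribute among petals.
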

\begin{proof}
By Lemma \ref{lem:refinements} and to avoid a $C_7$, each petal of $B$ with length at most six is equal to its refinement.
Note that each vertex of $B$ is a junction vertex, since $G$ has no degree 2 vertex.
Hence, 
$$g(B) \le 24(1 + 3/4) - 17\cdot 3 + 6(3/2) = 0,$$
as desired.
\end{proof}

\begin{lemma}
Let $G \in \cP_n$ with $n \ge 7$ and let $B$ be a triangular-block of $G$. Suppose $G$ is drawn in the plane such that $B=B_{4,b}$ is drawn as in Figure \ref{table:small blocks} with its outer face as its only hole. Then $g(B) \le 0$.
\end{lemma}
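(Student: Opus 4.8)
The plan is to compute $g(B) = 24 f(B) - 17 e(B) + 6 n(B)$ directly, writing $B = B_{4,b} \cong K_4$ with outer triangle $C = x_1x_2x_3x_1$ and interior vertex $c$ adjacent to $x_1, x_2, x_3$. Since the outer face of $B$ is its only hole, the three triangular faces of $B$ at $c$ are faces of $G$; hence $B$ has exactly three non-hole faces, $e(B) = 6$, and $c$ has degree $3$ in $G$ and lies in no triangular-block other than $B$, so $n(c) = 1$ and $f(B) = 3 + \sum_{F\in\cP(B)} f_F(B)$. The task then reduces to bounding $\sum_F f_F(B)$ and $n(B)$.

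First I would show that each $x_i$ is a junction vertex: if, say, $x_3$ lay only in $B$, then $x_3$ would have degree $3$ with all incident edges in $B$, and $\{x_3, c\}$ would be incident with only the five edges $x_3x_1, x_3x_2, x_3c, cx_1, cx_2$, making it a $(18/7)$-sparse set, contrary to $G \in \cP_n$. Hence $n(x_i) \le 1/2$, $n(B) \le 5/2$, and $g(B) \le 24\sum_F f_F(B) - 15$. Next I would pin down the petals. Each petal meets $B$ only in edges of $C$, and each edge of $C$ lies on a triangular face of $B$ and on a unique petal. Since $K_4$ has paths of every length in $\{1,2,3\}$ and a Hamilton path between any two of its vertices, Lemmas \ref{lem:paths}, \ref{lem:Hpath}, and \ref{lem:refinements} yield: a petal meeting $B$ in a single edge of $C$ has length $4$ or $\ge 8$; a petal meeting $B$ in a $2$-edge subpath of $C$ has that subpath as a bad cherry, so (as $B_{4,b}$ is not among $B_{5,d}, B_{6,g}, B_{6,h}, B_{7,b}$) it is not equal to its refinement, forcing length $\ge 7$ and hence, by $C_7$-freeness, length $\ge 9$ with $f_F(B)\le 1/8$; and no petal meets $B$ in all of $C$. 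Consequently distinct edges of $C$ lie in distinct \emph{short} petals (those of length $< 8$), each short petal meets $B$ in exactly one edge of $C$, a non-leaky short petal is a $4$-cycle with $f_F(B) = 1/4$, and a leaky short petal has $f_F(B) \le 1/5$.

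I would then split on the number of edges of $C$ lying in short petals. If at most one, then $\sum_F f_F(B) \le \tfrac14 + \tfrac28 = \tfrac12$, so $g(B) < 0$. If exactly two, these lie in two distinct short petals while the third edge of $C$ lies in a petal of length $\ge 8$, so $\sum_F f_F(B) \le \tfrac14 + \tfrac14 + \tfrac18 = \tfrac58$ and $g(B) \le 0$. The remaining case is that all three edges of $C$ lie in short petals. If every $x_i$ lies in at least three triangular-blocks, then $n(B) \le 2$ and $g(B) \le 24\cdot\tfrac34 + 6\cdot 2 - 30 = 0$. Otherwise some $x_i$, say $x_1$, lies in exactly two triangular-blocks $B$ and $B_1$; then the two petals through $x_1$ are short, and the edge of each of them at $x_1$ other than an edge of $C$ lies in $B_1$.

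The final step is to show that this last situation is impossible. For instance, when the petals through $x_1$ and at $x_2$ are $4$-cycles $x_1 a_1 a_2 x_2 x_1$ and $x_2 b_1 b_2 x_3 x_2$, the path $x_1 a_1 a_2 x_2 b_1 b_2 x_3$ together with the edge $x_1x_3$ is a $C_7$ unless $\{a_1,a_2\}$ meets $\{b_1,b_2\}$; a short case analysis of the possible coincidences (which force two petals to share an edge at a vertex of $C$, thereby opening a route through $c$, along $C$, or through $B_1$) then produces either a different $7$-cycle or a $(18/7)$-sparse set of order at most three, a contradiction. I expect this last step to be the main obstacle: the counting in every other case is routine once the petal-length dichotomy (length $4$ or $\ge 8$) and the fact that the corners of $C$ are junction vertices are in hand, but ruling out three short petals around $C$ when a corner of $C$ lies in only two triangular-blocks requires checking all the ways the three petals can overlap and, in each, extracting a $7$-cycle or a small sparse set from the plane-graph structure.
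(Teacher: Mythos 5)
Your reduction and all of the counting are sound, and up to the last case your argument tracks the paper's proof closely: the centre vertex contributes $1$ to $n(B)$ and three triangular faces, the three corners are junction vertices (via the sparse pair with the centre), no petal can contain two edges of the outer triangle $C$ (Lemma \ref{lem:refinements} plus $C_7$-freeness), and a short petal is either a $4$-cycle on one edge of $C$ or a leaky petal contributing at most $1/5$. The cases with at most two short petals then close exactly as in the paper's first displayed inequality.

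The genuine gap is the final case, which is where essentially all of the difficulty of this lemma lives and which you leave as a sketch. You must rule out the configuration in which all three edges of $C$ lie in short petals and some corner lies in only two triangular-blocks, but you verify only one sub-configuration: two $4$-cycle petals on adjacent edges with disjoint new vertices, where $x_1a_1a_2x_2b_1b_2x_3$ plus the edge $x_3x_1$ is indeed a $C_7$. The remaining coincidence patterns --- adjacent $4$-cycle petals sharing an edge at the common corner, petals sharing a vertex but no edge, and the leaky length-$5$ petals that your counting tolerates --- are not checked, and they do not all fall to the same $7$-cycle: when the two petals share the edge $x_2a_2$ the relevant cycle is $x_1a_1a_2b_2x_3cx_2x_1$, which must route through the centre, and a leaky petal with two length-$2$ $B$-paths creates no $C_7$ on its own, so one needs the petal on a further edge of $C$ to produce the contradiction (this is exactly how the paper eliminates leaky petals). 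For comparison, the paper proves the stronger statement that three short petals are impossible outright, so no junction-count sub-split is needed: it first kills leaky short petals by showing the petal on another edge of $C$ would contain a $B$-path of length $3$ to $5$, and then, with three $4$-cycle petals, shows the petal on a second edge contains a unique vertex outside $B$ and the first petal, and that each of the three possible attachments of that vertex yields a $C_7$ or a non-junction corner. Until a case analysis of this kind is actually carried out, the proof is incomplete.
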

\begin{proof}
By Lemma \ref{lem:refinements}, each petal of $B$ with length at most six is equal to its refinement.
Let $a,b,c$ be the vertices of the outer cycle of $B$, which are all junction vertices (to avoid an (18/7)-sparse set consisting of two adjacent vertices).
If there is a petal of $B$ with length at least 8, then 
$$g(B) \le 24(3 + 1/8 + 2/4) - 17\cdot 6 + 6(1 + 3/2) = 0.$$
So we may assume that each petal of $B$ has length less than 8. 

Suppose $B$ has a leaky petal, say $F_1$, and assume by symmetry that  $bc\in E(F_1)$.
Then $F_1$ is the union of $bc$, a $B$-path $P_1$ between $a$ and $b$, and a $B$-path $P_2$ between $a$ and $c$. 
Note that for each $i\in [2]$, $P_i$ has length 2 (or else $B \cup P_1 \cup P_2$ would contain $C_7$), and let $v_i$ be the internal vertex of $P_i$.
Now, the petal $F_2$ of $B$ containing $ac$ 
contains a $B$-path $Q$ between $a$ and $c$. 
Note that the length of $Q$ is between $3$ and $5$, and hence  $Q\cup P_1\cup B$ contains a $C_7$, a contradiction.  

So we may assume that $B$ has no leaky petal. 
Since $a,b,c$ are all junction vertices of $G$ and there is no $B$-path of length 4 or 5 in $G$ (to avoid forming a $C_7$ with $B$), each petal of $B$ has length 4.
Let $F_1,F_2,F_3$ be the petals containing $ac,ab,bc$, respectively. 

Let $x$ and $y$ be the vertices of $F_1$ that are not in $B$ so that $F_1=axyca$.
Then $F_2$ contains a vertex $z$ that is not in $B \cup F_1$ (to avoid an (18/7)-sparse set of order at most 2 in $G$) and such $z$ is unique to avoid a $C_7$ in $B\cup F_1\cup F_2$. So $z$ has two neighbors on $B \cup F_1$ that are in $F_2$: $a$ and $c$, or $a$ and $y$,  or $b$ and $x$. If these neighbors of $z$  are $b$ and $x$ then $B\cup F_1\cup F_2$ contains $C_7$, a contradiction. 
If the neighbors are $a$ and $c$ then $b$ is not a junction vertex, a contradiction.
So the neighbors of $z$ are $a$ and $y$, and then the fourth edge of $F_2$ is $by$ or $ay$.

By applying the same reasoning to the petals $F_1$ and $F_3$, we deduce that $bx\in E(F_3)$ or $cx \in E(F_3)$, and this is a contradiction because $b$ and $x$ are separated by the cycle $azyca$ or $ayca$.
\end{proof}

\begin{lemma}
Let $G \in \cP_n$ with $n \ge 7$ and let $B$ be a triangular-block of $G$. Suppose $G$ is drawn in the plane such that $B=B_{4,a}$ is drawn as in Figure \ref{table:small blocks} with its outer face as its only hole. Then $g(B) \le 0$.
\end{lemma}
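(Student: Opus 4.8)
The plan is to follow the template of the preceding small-block lemmas: first reduce to the situation where few petals are long, then pin down the short petals precisely enough to force either a $C_7$ or an (18/7)-sparse set of order at most $4$.

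I would begin by fixing notation. Write $C = abcda$ for the outer $4$-cycle of $B$ and $bd$ for its chord, so that $abd$ and $bcd$ are facial triangles of $G$ (by Lemma \ref{lem:interior path}, as the outer face of $B$ is a hole) while the $4$-face $abcd$ is the only hole. Thus $e(B) = 5$, $f(B) = 2 + \sum_{F} f_F(B)$ with the sum over the petals of $B$, $\sum_F e(F\cap B) = e(C) = 4$, and every petal has length at least $4$; moreover every petal $F$ satisfies $e(F')\ge 4$ (and $e(F')\ge 8$ once $|F|\ge 8$) since $B_{4,a}$ is not in the exceptional list of Lemma \ref{lem:refinements} and $G$ is $C_7$-free. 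As $G$ has minimum degree at least $3$, the degree-$2$ vertices $a$ and $c$ of $B$ are junction vertices, so $n(a),n(c)\le 1/2$. If $b$ and $d$ are also junction vertices then $n(B)\le 2$ and $f(B)\le 3$, so $g(B)\le 24\cdot 3-85+12<0$; hence I may assume $d$ is not a junction vertex, i.e.\ $\deg_G(d) = 3$, $N_G(d) = \{a,b,c\}$, $n(d) = 1$ and $n(B)\le 5/2$. It then suffices to check $24\sum_F f_F(B) + 6n(B)\le 37$.

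Since $\deg_G(d) = 3$, the three faces of $G$ at $d$ are $abd$, $bcd$ and a single petal $F_d$ containing the path $cda$; because the third edges at $a$ and $c$ leave $B$, planarity gives $F_d\cap C = cda$, so $F_d = cda\cup R$ for an $a$-$c$ $B$-path $R$. By Lemmas \ref{lem:Hpath} and \ref{lem:paths}, $B$ has $a$-$c$ paths of lengths $2$ and $3$, whence $|R|\notin\{4,5\}$ to avoid a $C_7$; and $|R| = 1$ would make $acd$ a facial triangle equivalent to $abd$, hence part of $B$, a contradiction. So $|R|\in\{2,3\}$ or $|R|\ge 6$. A short computation shows that if $|F_d| = 2 + |R|\ge 8$, or if $|F_d|\in\{4,5\}$ but every petal meeting $ab$ or $bc$ has length at least $8$, then $\sum_F f_F(B)\le 3/4$ and we are done; the same arithmetic, using that no facial cycle of $G$ has length $7$, reduces the problem to a few tight configurations in which $F_d$ is a $4$-cycle $cdar$ with $r\notin V(B)$, and either (a) there is one further petal $F^*$ with $F^*\cap C = abc$ and $|F^*| = 4$, or (b) there are two further petals $F_1,F_2$, both $4$-cycles, with $F_1\cap C = ab$ and $F_2\cap C = bc$.

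In the tight configurations I would argue directly. In case (b), if $F_1 = abqpa$ and $F_2 = bcstb$ with $p,q,s,t\notin V(B)$, then the chord $bd$ gives the $7$-cycle $apqbdcra$ (or its analogue with the internal vertex of $R$ adjacent to $a$ in place of $r$), a contradiction. Case (a) need not contain a $7$-cycle on the six vertices $a,b,c,d,r$ and the internal vertex $q$ of $Q$ (where $F^* = abcqa$); here I would instead observe that $ar,aq$ lie in triangular-blocks other than $B$, as do $cr,cq$, so that either $a$ and $c$ each lie in at least three triangular-blocks — giving $n(a),n(c)\le 1/3$, $n(B)\le 13/6$, and $24\cdot 1 + 6\cdot\frac{13}{6} = 37$, so $g(B)\le 0$ — or else a triangle among $arq$, $crq$ is a face of $G$. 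In the latter subcase I would examine the $4$-cycle $arcqa$ bounded by $R$ and $Q$: if it encloses a vertex $w$ then a $7$-cycle through $b$, $d$ and $w$ appears, while if it is filled by the two triangles $arq$, $crq$ then $\{a,b,c,d,r,q\}$ is a component of $G$, contradicting $2$-connectivity since $n > 6$. The main obstacle is exactly this last step: completing the (small) enumeration of the tight short-petal configurations around $B_{4,a}$ and, for each, producing the right $7$-cycle or the right sparse set; everything else is the accounting already performed in the earlier lemmas.
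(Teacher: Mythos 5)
Your overall strategy is the same as the paper's: force the two degree-$2$ vertices of $B$ to be junction vertices, reduce to the case where one chord-endpoint ($d$ in your labelling) is not a junction vertex, show the petal at $d$ is a $4$-cycle $cdar$, and then analyse the petals on $ab$ and $bc$. The accounting that isolates the tight configurations is essentially right. The endgame, however, contains an error and a genuine gap.

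First, case (a) is mishandled. If a single petal $F^*$ contains both $ab$ and $bc$, then since both edges meet $b$ it must contain the path $abc$, so $N_G(b)=\{a,c,d\}$ and $b$ is \emph{not} a junction vertex. Your bound $n(B)\le 13/6$ therefore fails: one only gets $n(B)\le \tfrac13+1+\tfrac13+1=\tfrac83$, and $24\cdot 1+6\cdot\tfrac83=40>37$. The case is in fact vacuous for a much simpler reason you do not invoke: $\{b,d\}$ is then incident with only the five edges $ab,bc,bd,ad,cd$, and $5\le \tfrac{36}{7}$, so it is an $(18/7)$-sparse set of order $2$, contradicting $G\in\cP_n$. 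Second, and more seriously, case (b) tacitly assumes the new vertices $p,q$ of $F_1$ and $s,t$ of $F_2$ are distinct from $r$; your $7$-cycle $apqbdcra$ needs this. Coincidences such as $p=r$ (the petal on $ab$ reusing the extra vertex of $F_d$) are consistent with planarity, and in the fully degenerate configuration $p=s=r$ the local picture contains no $C_7$ at all --- there one must instead exhibit the sparse set $\{a,c,d\}$, which is incident with only seven edges against an allowance of $\tfrac{54}{7}$. This coincidence analysis is exactly where the paper's proof does its real work: it shows that a second $4$-cycle petal must share the extra vertex of the first one, rules out the alternative attachment by a $C_7$, and then forces the third petal to have length at least $8$, which restores the count. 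Since you yourself flag the enumeration of these tight configurations as the unresolved obstacle, the proposal should be regarded as an incomplete sketch of the paper's argument rather than a proof.
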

\begin{proof}
By Lemma \ref{lem:refinements}, each petal of $B$ with length at most six is equal to its refinement.
Lemma \ref{lem:interior path} shows that $B$ has no triangular hole.
Let $C=abcda$ be the outer cycle of  $B$, with $a,c$ of degree 3 in $B$. Note that $b,d$ must be junction vertices as $\{b\}$ and $\{d\}$ cannot be an (18/7)-sparse set. 
If both $a$ and $c$ are junction vertices, then 
$$g(B) \le 24(2 + 4/4) - 17\cdot 5 + 6(4/2) = -1.$$
So we may assume by symmetry that $c$ is not a junction vertex. Then $a$ is a junction vertex (as $\{a,c\}$ is not an (18/7)-sparse set), and the path $bcd$ is contained a petal of $B$, say $F_1$. 

We may assume  $|F_1|=4$, or else, $|F_1|\ge 5$ and 
$$g(B) \le 24(2 + 2/4 + 2/5) - 17\cdot 5 + 6(1 + 3/2) = -2/5.$$
Then $F_1$ is not leaky. 
Let $F_1=bcdvb$, where $v\notin V(B)$.

Let $F_2,F_3$ denote the petals of $B$ containing $ab,ad$, respectively.
Then $F_2\ne F_3$, or else $\{a,c\}$ would be an (18/7)-sparse set in $G$.  
We may assume $|F_2| = 4$; for otherwise, $|F_2|\ge 5$ and $|F_3|\ge 5$, and $$ g(B) \le 24(2 + 2/4 + 2/5) - 17\cdot 5 + 6(1 + 3/2) = -2/5.$$

Note that $F_2$ contains a unique vertex, say $w$, not contained in $B \cup F_1$, to avoid a $C_7$ in $B\cup F_1\cup F_2$. Let $e_1,e_2$ be the edges of $F_2-ab$ incident with $a,b$, respectively. Then $e_1, e_2 \notin E(B\cup F_1)$, or else  $\{a,c\}$ or $\{b,c\}$
would be an (18/7)-sparse set in $G$. 
Since $e_1$ and $e_2$ cannot both be incident to $w$, 
 either $e_1 = av$ and $e_2=bw$, or $e_1=aw$ and $e_2 = bd$.

If $e_1=aw$ and $e_2 = bd$ then $w$ is incident to $a$ and $d$.
But then  $F_3$  consists of $ad$ and a $(B\cup F_1)$-path between $a$ and $d$, which has length at least 3 (as $B$ is a triangular-block) and forms a $C_7$ with $B\cup F_1$, a contradiction. 

So  $e_1 = av$ and $e_2=bw$, which implies that  $F_2= abwva$.
Since $\{c,d\}$ is not an (18/7)-sparse set in $G$, $F_3$ contains a  $(B \cup F_1 \cup F_2)$-path between $a$ and $d$ or between $d$ and $v$. 
But then $|F_3| \ge 8$ to avoid a $C_7$ in $B \cup F_1 \cup F_3$ or $B \cup F_1 \cup F_2 \cup F_3$, and so 
$$g(B) \le 24(2 + 3/4 + 1/8) - 17\cdot 5 + 6(1 + 3/2) = -1,$$
as desired.
\end{proof}

%%%%%%%%%% The $2$-Connected Case

\section{The $2$-Connected Case} \label{sec:2-connected case}

The lemmas in the previous sections combine to show the following.

\begin{lemma} \label{lem:hole-free blocks}
Let $G \in \cP_n$ with $n \ge 7$, and let $B$ be a triangular-block of $G$ with only one hole, and no triangular hole unless $B$ is a triangulation. Then $g(B) \le 0$, with the exception of the two cases shown in Figure \ref{exceptions}.
\end{lemma}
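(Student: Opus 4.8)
The plan is to obtain Lemma~\ref{lem:hole-free blocks} by assembling the case analyses carried out in Sections~\ref{sec:large blocks}--\ref{sec:small blocks}. First I would apply Lemma~\ref{lem:all blocks} to conclude that $|B| \le 6$ or $B \in \{B_{7,a}, B_{7,b}\}$, and then Lemma~\ref{lem:near-triangulation} to conclude that if $|B| \le 6$ then $B$ is a near triangulation up to a redrawing of $G$ that preserves all facial cycles. Since such a redrawing changes neither the collection of holes of $B$ nor the value of $g(B)$, we may fix one. Hence $B$ is one of the blocks depicted in Figures~\ref{table:small blocks}, \ref{table:5-vertex blocks}, \ref{table:6-vertex blocks}, and \ref{table:7-vertex blocks}, or else a chordless near triangulation on six vertices; in the last case the outer cycle has length at most five, since if all six vertices lay on a chordless outer cycle then the interior of that hexagon could not be triangulated, contradicting the definition of a near triangulation.

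The second step is to verify that the hypothesis of Lemma~\ref{lem:hole-free blocks} --- that $B$ has at most one hole and no triangular hole unless $B$ is a triangulation --- supplies exactly what each of the earlier lemmas requires. Every block in the above list other than $B_{7,b}$ and the triangulations has a unique non-triangular face, so for such $B$ the hypothesis forces its unique hole to be that face, and a further redrawing lets us take that face to be the outer face of $B$. For the triangulations --- $B_3$, $B_{4,b}$, $B_{5,d}$, and the chordless six-vertex triangulations --- the hypothesis permits a single triangular hole, which we again draw as the outer face. Finally, $B_{7,b}$ has two non-triangular faces, namely its outer $4$-cycle and an interior $4$-cycle, and the hypothesis guarantees that at most one of these is a hole; drawing that hole as the outer face gives precisely the setup of Lemma~\ref{lem:outer 4-cycle}. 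In each case the resulting drawing is the one assumed by the relevant earlier lemma.

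It then remains only to invoke, in each case, the corresponding result: the lemma on trivial triangular-blocks when $|B| = 2$; the lemmas on $B_3$, $B_{4,a}$, and $B_{4,b}$ in Section~\ref{sec:small blocks}; the lemmas on $B_{5,a}$ and $B_{5,d}$, together with the lemma on $\{B_{5,b}, B_{5,c}\}$ (which contributes precisely the two exceptional flowers of Figure~\ref{exceptions}), in Section~\ref{sec:5-vertex blocks}; the first lemma of Section~\ref{sec:large blocks} for a chordless six-vertex near triangulation; Lemma~\ref{lem:outer 6-cycle} for $B \in \{B_{6,a}, B_{6,b}, B_{6,c}, B_{7,a}\}$; Lemma~\ref{lem:outer 5-cycle} for $B \in \{B_{6,d}, B_{6,e}, B_{6,f}\}$; and Lemma~\ref{lem:outer 4-cycle} for $B \in \{B_{6,g}, B_{6,h}, B_{6,i}, B_{7,b}\}$. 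Each of these gives $g(B) \le 0$ except in the two flower cases of Figure~\ref{exceptions}, which completes the proof. There is no substantial difficulty here beyond careful bookkeeping; the point that warrants attention is reconciling the hole hypothesis of Lemma~\ref{lem:hole-free blocks} with the slightly different hole hypotheses of the earlier lemmas, and in particular recognizing that $B_{7,b}$ is the only block for which excluding a second, interior $4$-hole does any real work, which is why Lemma~\ref{lem:outer 4-cycle} states its hole hypothesis explicitly.
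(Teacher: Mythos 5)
Your proposal is correct and matches the paper exactly: Lemma \ref{lem:hole-free blocks} is stated in the paper with no written proof beyond the remark that ``the lemmas in the previous sections combine to show'' it, and your assembly — classifying $B$ via Lemmas \ref{lem:near-triangulation} and \ref{lem:all blocks}, reconciling the hole hypotheses, and invoking the corresponding lemma from Sections \ref{sec:large blocks}--\ref{sec:small blocks} in each case — is precisely that intended combination. Your observation that $B_{7,b}$ is the only non-triangulation block with two non-triangular faces, so the ``at most one hole'' hypothesis is only doing real work there, is an accurate reading of why Lemma \ref{lem:outer 4-cycle} is the one that states its hole hypothesis explicitly.
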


We next show that this is true even for triangular-blocks with more than one hole.

\begin{lemma} \label{lem:counting blocks}
Let $G \in \cP_n$ with $n \ge 7$. If $B$ is a triangular-block of $G$ then $g(B) \le 0$, with the exception of the two cases shown in Figure \ref{exceptions}.
\end{lemma}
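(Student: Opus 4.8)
The plan is to derive this from Lemma~\ref{lem:hole-free blocks}, which already yields $g(B)\le 0$ (outside the two exceptional flowers) for every triangular-block $B$ with at most one hole and no triangular hole unless $B$ is a triangulation; so it suffices to dispose of the two remaining cases. \emph{First, suppose $B$ is not a triangulation but has a triangular hole.} I would show this cannot happen, by proving that every triangular face of a non-triangulation triangular-block is actually a facial triangle of $G$. Recall $B$ is the union of a single $\sim$-equivalence class $\mathcal{T}$ of facial triangles of $G$; draw $B$ with a non-triangular outer cycle $C$. For an edge $e$ of $C$, the facial triangle of $G$ through $e$ cannot lie on the outer side of $e$, since its third vertex would then be a vertex of $G$ lying in $B$ (being in a triangle $\sim$-equivalent to a member of $\mathcal{T}$) but drawn outside $B$; hence it is the unique triangular face of $B$ incident with $e$. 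Propagating across shared edges — whenever a triangular face $T$ of $B$ is known to belong to $\mathcal{T}$, a triangular face $T'$ of $B$ sharing an edge with $T$ must be the facial triangle of $G$ on that side of the shared edge, as any other option forces a vertex of $B$ strictly inside $T'$ — shows that every triangular face of $B$ lies in $\mathcal{T}$ (here we use that $B$ is not a triangulation, so that the outer cycle $C$ is available to start and every triangular face is reached). Consequently the holes of $B$ are among its faces of length at least four.

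\emph{Second, suppose $B$ has at least two holes.} By Lemma~\ref{lem:common vertex} any two faces of $B$ of length at least four share at most one vertex, and no edge of $B$ can border two holes, since every edge of $B$ lies in a facial triangle of $G$. Using the classification of $C_7$-free triangular-blocks (Lemma~\ref{lem:all blocks} and Figures~\ref{table:small blocks}--\ref{table:7-vertex blocks}) I would check that every non-triangulation block other than $B_{7,b}$ has at most one face of length at least four, and that for each triangulation block a short argument leaves at most one hole (an edge cannot border two holes, and the facial triangle of $G$ just inside a would-be second hole is, by $\sim$-equivalence, forced either to lie already in $B$ or to enlarge $B$, a contradiction). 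This leaves only $B=B_{7,b}$ with both of its faces of length at least four being holes; here $C_7$-freeness is decisive, since $B_{7,b}$ contains $x$-$y$ paths of several consecutive lengths between the relevant pairs of its vertices (via Lemmas~\ref{lem:paths} and~\ref{lem:Hpath} applied to the appropriate six-vertex subtriangulation), so a $B$-path of length $3$, $4$, $5$, or $6$ inside a hole would close up a $C_7$; one deduces that each hole carries a single petal of length at least eight and that $B_{7,b}$ has many junction vertices, whence a direct computation of $g(B_{7,b})=24f(B_{7,b})-17e(B_{7,b})+6n(B_{7,b})$ gives $g(B_{7,b})<0$. With Lemma~\ref{lem:hole-free blocks} this completes the proof.

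I expect the main difficulty to be the first step — ruling out triangular holes in non-triangulation blocks via the $\sim$-equivalence argument — together with making the census in the second step genuinely exhaustive over the block classification; the final $g(B_{7,b})$ estimate should be routine once the two holes have been constrained.
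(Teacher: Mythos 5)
Your plan deviates from the paper's proof, and as written it has genuine gaps. The most serious one is the propagation step in your first part. From the fact that a triangular face $T$ of $B$ is a facial triangle of $G$, you conclude that a triangular face $T'$ of $B$ sharing an edge $f$ with $T$ must itself be a facial triangle of $G$, ``as any other option forces a vertex of $B$ strictly inside $T'$.'' But the face of $G$ on the $T'$-side of $f$ need not be bounded by a triangle at all: the vertices of $G$ drawn inside a hole $T'$ need not belong to $B$, and if every face of $G$ inside $T'$ that meets its boundary has length at least four, no vertex of $B$ is forced into the interior of $T'$. In fact the statement you are trying to prove in this step is false for general $2$-connected plane graphs (attach a long cycle inside the central triangular face $246$ of $B_{6,e}$ by three edges to $2,4,6$: all faces inside have length at least four, $B$ is still exactly $B_{6,e}$, and $246$ is a triangular hole), so any correct proof must use $C_7$-freeness, the sparse-set condition, or Lemma~\ref{lem:interior path} (which is how the paper rules out triangular holes in the specific places where it needs to); your argument invokes none of these. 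The same defect occurs in your treatment of triangulation blocks with two holes, where you assume there is a ``facial triangle of $G$ just inside'' the second hole; the faces of $G$ inside a hole that meet its boundary may all be non-triangular, so no such triangle is available.

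Second, your census is not exhaustive: $B_{6,g}$ and $B_{6,h}$ each have two faces of length four (an outer $4$-cycle and an interior $4$-face, which is exactly why they appear in Lemma~\ref{lem:refinements} and in the proof of Lemma~\ref{lem:all blocks}), so the claim that only $B_{7,b}$ among non-triangulation blocks has two faces of length at least four is wrong, and those blocks with two holes are left unhandled, as are triangulation blocks with several holes if your ``short argument'' cannot be repaired. The paper avoids all of this case analysis: it never decides whether triangular or multiple holes occur. It notes that every hole of $B$ other than the outer face has length at most $4$, deletes the interiors of all such holes to get a graph $G'$ in which $B$ has only its outer hole, applies Lemma~\ref{lem:hole-free blocks} to get $g'(B)\le 0$, and then compares the two computations of $g$: $e(B)$ is unchanged, $n(B)\le n'(B)$ because each vertex of $B$ lies in at least as many triangular-blocks of $G$ as of $G'$, and $f(B)\le f'(B)$ because a hole of length at most $4$ contributes exactly $1$ to $f'(B)$ but at most $4/4$ to $f(B)$ (petals have length at least four); hence $g(B)\le g'(B)\le 0$. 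You should either adopt this monotonicity argument or rebuild your two steps using the hypotheses that define $\cP_n$.
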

\begin{proof}
We draw $G$ so that $B$ is drawn as shown in Figures \ref{table:small blocks} - \ref{table:7-vertex blocks} and the outer face of $B$ is a hole. If $B$ has no other holes, then the statement follows from Lemma \ref{lem:hole-free blocks}. So 
let $\cF$ be the set of facial cycles of $B$ bounding a hole other than the outer face of $B$. Note that each $F \in \cF$ satisfies $|F| \le 4$.

Let $G'$ be obtained from $G$ by deleting all vertices and edges in the interior of $F$ for all $F\in \cF$. 
Let $e'(B)$, $n'(B)$, $f'(B)$, and $g'(B)$ be the values of $e(B)$, $n(B)$, $f(B)$, and $g(B)$ calculated with respect to $G'$. By Lemma \ref{lem:hole-free blocks}, we have $g'(B)\le 0$. 
Note that $e(B) = e'(B)$, and $n(B) \le n'(B)$ because for each vertex $v$ of $B$, the number of triangular-blocks in $G$ containing $v$ is at least the number of triangular-blocks in $G'$ containing $v$.
Also, each $F \in \cF$ contributes $1$ towards $f'(B)$ and at most $4/4$ towards $f(B)$, because $|F|\le 4$,  
and each petal of $B$ has length at least four since $B$ is a triangular-block. So $f(B) \le f'(B)$.
Therefore, $g(B) \le g'(B) \le 0$.
\end{proof}

We can now combine Lemma \ref{lem:partition} with Lemma \ref{lem:counting blocks} to show that Theorem \ref{main} holds for graphs in $\cP_n$ with $n \ge 7$.

\begin{theorem}
Let $G \in \cP_n$ with $n \ge 7$.
Then $e(G) \le \frac{18n}{7} - \frac{48}{7}$.
\end{theorem}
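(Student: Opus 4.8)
The plan is to assemble the preceding block-analysis lemmas via the partition lemma (Lemma~\ref{lem:partition}). First I would recall that for any $2$-connected plane graph we have $\sum_{B \in \cB(G)} g(B) = 24f(G) - 17e(G) + 6n$, and that $e(G) \le \frac{18n}{7} - \frac{48}{7}$ is equivalent to this sum being at most $0$. By Lemma~\ref{lem:counting blocks}, every triangular-block $B$ of $G$ satisfies $g(B) \le 0$, \emph{except} when the flower of $B$ is one of the two configurations in Figure~\ref{exceptions}. So if $G$ had no such exceptional block, we could take the trivial partition of $\cB(G)$ into singletons and be done immediately. The whole content of the proof is therefore to handle the exceptional blocks: for each exceptional block $B$ I must group it with one or more nearby blocks into a part $\cB_i$ of the partition so that $\sum_{B' \in \cB_i} g(B') \le 0$, while leaving every other block in its own singleton part.

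The key step is thus the bookkeeping around an exceptional block $B$. By inspection of Figure~\ref{exceptions}, $B$ is a $B_{5,b}$ or $B_{5,c}$ whose flower has the specific shape drawn, in particular with a $4$-cycle petal $F = bcdvb$ on one side and the "outer" structure $ubv$, $udv$ on the other; here $g(B) > 0$ but only by a small amount (the excess over $0$ in the relevant inequality from the $B_{5,b}/B_{5,c}$ lemma is bounded by a small constant, on the order of a few units of the $24f - 17e + 6n$ scale). The idea is to charge this excess to the triangular-blocks sitting on the edges of the outer cycle of $B$ that are forced to lie in large faces. By Lemma~\ref{lem:exceptional blocks}, every edge $e \in E(G) \setminus E(B)$ in a petal of $B$ is a trivial triangular-block incident with exactly one face of length less than five — hence incident with a face of length at least eight (using $C_7$-freeness, since a face of length $5$, $6$, or $7$ is impossible or would create $C_7$ together with a short path through $B$). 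Such a trivial block $B'$ then has $g(B')$ strictly negative by a definite margin (roughly $24(1/8 + 1/\ell) - 17 + 6 < 0$ with $\ell \ge 8$), so absorbing $B$'s small positive contribution into one such neighboring trivial block keeps the part-sum $\le 0$. I would verify the arithmetic for each of the two exceptional flowers separately, checking also that distinct exceptional blocks get charged to distinct trivial blocks (which holds because each relevant trivial block borders $B$ and the flowers in Figure~\ref{exceptions} pin down its incident faces).

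With the partition constructed — singletons for all non-exceptional blocks, and a two- or three-block part pairing each exceptional block with a suitable incident trivial block of strictly negative $g$-value — Lemma~\ref{lem:partition} immediately gives $e(G) \le \frac{18n}{7} - \frac{48}{7}$, completing the proof. (Here we are using that $G \in \cP_n$ is $2$-connected by definition of $\cP_n$, so the face-counting identities and all the block lemmas apply.)

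I expect the main obstacle to be the exceptional-block accounting: one must be sure that the neighboring trivial block used to absorb the positive $g(B)$ is genuinely available (not already claimed by another exceptional block, and not coinciding with an edge of $B$ itself) and that its negative margin genuinely dominates the positive excess of $B$ in every sub-case of the two flowers. This is where the precise statements of Lemma~\ref{lem:exceptional blocks} (forcing incident faces of length $\ge 5$, hence $\ge 8$) and the exact constants in the $B_{5,b}/B_{5,c}$ lemma both get used, and it is the only place where a genuine partition — rather than the trivial one — is needed.
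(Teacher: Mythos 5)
Your overall strategy is exactly the paper's: apply Lemma \ref{lem:partition} with singleton parts for every block satisfying $g(B)\le 0$ (which is all of them except the two exceptional flowers, by Lemma \ref{lem:counting blocks}), and group each exceptional block with trivial blocks from its own flower, using Lemma \ref{lem:exceptional blocks} both to find those trivial blocks and to guarantee that no trivial block is claimed by two exceptional flowers. So the architecture is right.

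The gap is in the accounting for the exceptional parts, and it is not just unfinished arithmetic --- the specific plan you commit to does not close. For an exceptional block $B$ one computes $g(B)\le 24(4+4/4)-17\cdot 8+6(3+2/2)=8$, not ``a few units'' that you can leave unquantified. On the other side, Lemma \ref{lem:exceptional blocks} only gives that each of the four trivial blocks $bu,bv,du,dv$ in the flower is incident with exactly one face of length less than five; the paper therefore only charges each with a face of length at least $5$ (plus the $4$-petal), yielding $g(e_i)\le 24(1/4+1/5)-17+6(2/3)=-11/5$. With that bound you need all four trivial blocks: $8-4\cdot\tfrac{11}{5}=-\tfrac45\le 0$, whereas one, two, or even three do not suffice. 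Your proposed shortcut --- that the second face of each such trivial block has length at least $8$, so that one (or two) trivial blocks absorb the excess --- is an unproven strengthening of Lemma \ref{lem:exceptional blocks}, and it appears to be false as stated: the flower contains $b$--$v$ paths of lengths $1,3,4,5$ but none of length $2$, so a face of length $6$ on the far side of $bv$ need not create a $C_7$, and even granting length $\ge 8$ a single trivial block contributes only about $-4$ against the $+8$ excess. The fix is simply to put $B$ together with \emph{all four} trivial blocks of its flower into one part, which is what the paper does; as written, your partition would leave parts with positive $g$-sum.
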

\begin{proof}
Let $\cB$ be the collection of triangular-blocks of $G$, and let $\cB_1 \subseteq \cB$ be the collection of all exceptional triangular-blocks of $G$ with flower shown in Figure \ref{exceptions}.

We now define a suitable partition $\cP$ of $\cB$.
Let $B \in \cB$.
If $B \in \cB_1$, then (by Lemma \ref{lem:exceptional blocks}) let $e_1, e_2, e_3, e_4$ be the four trivial blocks in $G$ that are contained in the flower of $B$, and let $\{B, e_1, e_2, e_3, e_4\} \in \cP$.
If $B \notin \cB_1$ and is not a trivial block in $G$ contained in the flower of a triangular-block in $\cB_1$, let $\{B\} \in \cP$.
To show that $\cP$ is a partition of $\cB$, it suffices to show that there is no trivial triangular-block $e$ that is in the flowers of two blocks in $\cB_1$.
This follows from Lemma \ref{lem:exceptional blocks}, because $e$ would be incident with two faces of length four.

By Lemma \ref{lem:partition}, it suffices to show that each set $\cB' \in \cP$ satisfies $\sum_{B \in \cB'} g(B) \le 0$.
If $\cB'=\{B\}$ for some triangular-block $B$ in $G$, then $B$ is not exceptional and it follows from Lemma \ref{lem:counting blocks} that $\sum_{B \in \cB'} g(B) =g(B) \le 0$.
Otherwise $\cB'=\{B,e_1,e_2,e_3,e_4\}$ for some exceptional block $B$ and the four trivial blocks $e_1,e_2,e_3,e_4$ in the flower of $B$.
For each $i \in [4]$, the edge $e_i$ is incident with a face of length at least five by Lemma \ref{lem:exceptional blocks}, and both its ends are in at least three triangular-blocks, and so 
$$g(e_i) \le 24(1/4 + 1/5) - 17 + 6(2/3) = -11/5.$$
Since 
$$g(B) \le 24(4 + 4/4) - 17\cdot 8 + 6(3 + 2/2) = 8$$
using Lemma \ref{lem:refinements},
we calculate that $\sum_{B \in \cB'} g(B) \le 8 - 4(11/5) = -4/5 < 0$.
Therefore each $\cB' \in \cP$ satisfies $\sum_{B \in \cB'} g(B) \le 0$, and Lemma \ref{lem:partition} implies that $e(G) \le \frac{18n}{7} - \frac{48}{7}$.
\end{proof}

%%%%%%%%%% The Main Proof

\section{The Main Proof} \label{sec:main proof}

We now obtain Theorem \ref{main} from Theorem \ref{main good} by considering small separations and sparse sets.

\begin{proof}[Proof of Theorem \ref{main}]
Let $\cG_0$ be the class consisting of all planar graphs with at most six vertices or with a planar drawing in $\cP_n$ for some $n \ge 7$.
For each positive integer $i$, let $\cG_i$ be the union of $\cG_{i-1}$ and the class of $C_7$-free planar graphs obtained from any of the following operations:
\begin{enumerate}[(1)]

\item Add a set $S$ of vertices, and edges with at least one vertex in $S$ to a graph in $\cG_{i-1}$ such that $|S|\le 4$ and $S$ is an (18/7)-sparse set in the new graph. 

\item Take the disjoint union of two graphs in $\cG_{i-1}$.

\item Glue two graphs in $\cG_{i-1}$ together at a vertex.
\end{enumerate}
Let $\cG = \cup_{i\ge 0} \cG_i$, and note that $\cG$ is precisely the class of $C_7$-free planar graphs.
We will show that operations (1)-(3) can only decrease density.

\begin{claim} \label{claim:(1)-(4)}
Let $G_1$ be an $n_1$-vertex $C_7$-free planar graph with $e(G_1) \le \frac{18n_1}{7} - c_1$ for some constant $c_1$.
If $G$ is an $n$-vertex $C_7$-free planar graph obtained from $G_1$ by operation $(1)$ by adding an (18/7)-sparse set $S$, then $e(G) \le \frac{18n}{7} - c_1 -\left(\frac{18|S|}{7}-\lfloor \frac{18|S|}{7}\rfloor\right)$.
\end{claim}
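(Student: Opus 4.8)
There is no real obstacle in this claim: it is a direct vertex‑and‑edge count, and the only point requiring any care is the passage from the real bound $\frac{18}{7}|S|$ to an integer bound, which is precisely what produces the fractional correction term in the statement.

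First I would record how operation $(1)$ changes the parameters. Since $S$ is disjoint from $V(G_1)$, we have $V(G) = V(G_1)\cup S$, hence $n = n_1 + |S|$, that is, $n_1 = n - |S|$. Next I would count edges. By the definition of operation $(1)$ every edge of $G$ that is not an edge of $G_1$ has at least one end in $S$, while no edge of $G_1$ has an end in $S$ (all ends of edges of $G_1$ lie in $V(G_1)$). Therefore the set of edges of $G$ with at least one incident vertex in $S$ is exactly $E(G)\setminus E(G_1)$, and so $e(G) = e(G_1) + |E(G)\setminus E(G_1)|$. Because $S$ is $(18/7)$‑sparse in $G$ we have $|E(G)\setminus E(G_1)| \le \frac{18}{7}|S|$, and since the left‑hand side is a nonnegative integer it is in fact at most $\lfloor \frac{18}{7}|S|\rfloor$.

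Combining this with the hypothesis $e(G_1)\le \frac{18 n_1}{7} - c_1$ and substituting $n_1 = n - |S|$ then gives
\[
e(G) \;\le\; \frac{18 n_1}{7} - c_1 + \left\lfloor \frac{18|S|}{7}\right\rfloor \;=\; \frac{18 n}{7} - \frac{18|S|}{7} - c_1 + \left\lfloor \frac{18|S|}{7}\right\rfloor \;=\; \frac{18 n}{7} - c_1 - \left(\frac{18|S|}{7} - \left\lfloor \frac{18|S|}{7}\right\rfloor\right),
\]
which is exactly the claimed inequality.

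I would close with one remark that will matter downstream: since $|S|\le 4$ and $7 \nmid 18|S|$ for every $|S|\in\{1,2,3,4\}$, the correction term $\frac{18|S|}{7} - \lfloor\frac{18|S|}{7}\rfloor$ is strictly positive, so operation $(1)$ strictly improves the additive constant. The genuine work lies not in this claim but in the parallel bookkeeping for the disjoint‑union and vertex‑gluing operations $(2)$ and $(3)$, and in verifying the base class $\cG_0$ via Theorem~\ref{main good}; there one must check that the gains from operation $(1)$ (and the base constant $\frac{48}{7}$) are enough to reach the constant $\frac{48}{7}$ claimed in Theorem~\ref{main} after arbitrarily many applications of $(1)$–$(3)$.
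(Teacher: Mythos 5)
Your proof is correct and follows exactly the paper's argument: write $n=n_1+|S|$, bound the number of new edges by $\lfloor\frac{18|S|}{7}\rfloor$ using the sparsity of $S$ together with integrality, and rearrange. Your closing remark about the strictly positive savings is also made in the paper immediately after the claim, so there is nothing further to add.
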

\begin{proof} Note that $n=n_1+|S|$ and 
$$e(G) \le \frac{18n_1}{7} - c_1 +\Bigl\lfloor \frac{18|S|}{7}\Bigr\rfloor= \frac{18n}{7} - c_1 - \left(\frac{18|S|}{7}-\Bigl\lfloor \frac{18|S|}{7}\Bigr\rfloor\right).$$
\end{proof}

Note that when $|S|\in [2]$ this gives a saving of $1/7$, and when $|S|\in \{3,4\}$ this gives a saving of $2/7$. We perform a similar calculation for operations (2) and (3).

\begin{claim} \label{claim:(5) or (6)}
For each $i \in[2]$, let $G_i$ be an $n_i$-vertex $C_7$-free planar graph with $e(G_i) \le \frac{18n_i}{7} - c_i$ for a constant $c_i$.
If $G$ is an $n$-vertex $C_7$-free planar graph obtained by applying operation $(2)$ or $(3)$ to $G_1$ and $G_2$, then
$e(G) \le \frac{18n}{7} - c_1 - c_2 + \frac{18}{7}$.
Moreover, if $(3)$ applies, then $e(G) \le \frac{18n}{7} - c_1 - c_2$.
\end{claim}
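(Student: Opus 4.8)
The plan is to prove both assertions by a direct edge- and vertex-count, since neither operation creates or destroys any edge beyond those already present in $G_1$ and $G_2$. The key preliminary step I would establish is that in each case every edge of $G$ comes from exactly one of $G_1,G_2$ and no two edges are identified, so that $e(G)=e(G_1)+e(G_2)$. For the disjoint union this is immediate, and for the gluing it holds because only a single vertex $v$ is identified: the edges at $v$ inherited from $G_1$ end at neighbors lying in $G_1-v$, those inherited from $G_2$ end in $G_2-v$, and these two neighbor sets are disjoint. Applying the two density hypotheses then gives
\[
e(G)=e(G_1)+e(G_2)\le \Bigl(\tfrac{18n_1}{7}-c_1\Bigr)+\Bigl(\tfrac{18n_2}{7}-c_2\Bigr)=\tfrac{18(n_1+n_2)}{7}-c_1-c_2.
\]

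Next I would split on the vertex count, which is the only quantity distinguishing the two operations. For operation $(3)$, in which no vertex is identified, we have $n=n_1+n_2$; substituting $n_1+n_2=n$ into the displayed bound yields at once
\[
e(G)\le \tfrac{18n}{7}-c_1-c_2,
\]
which is exactly the sharper estimate of the moreover clause. For operation $(2)$, in which exactly one vertex is identified, we have $n=n_1+n_2-1$, that is $n_1+n_2=n+1$, and substituting gives
\[
e(G)\le \tfrac{18(n+1)}{7}-c_1-c_2=\tfrac{18n}{7}-c_1-c_2+\tfrac{18}{7}.
\]
Since $\tfrac{18}{7}>0$, the sharper bound implies the weaker one, so $e(G)\le \tfrac{18n}{7}-c_1-c_2+\tfrac{18}{7}$ holds for either operation; this establishes the first assertion, while the sharper bound already obtained for operation $(3)$ establishes the moreover clause.

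I do not expect a genuine obstacle here: the whole argument is elementary bookkeeping with the two hypotheses together with the relation between $n$ and $n_1+n_2$. The one point that truly needs care, and the only place where the two operations could in principle differ in their edge count, is the identity $e(G)=e(G_1)+e(G_2)$; this is why I would isolate and justify it at the very start, using the fact that the gluing shares only a single vertex. I would finally remark that the planarity and $C_7$-freeness of $G$ are part of the hypothesis --- the operations are defined precisely to output $C_7$-free planar graphs --- so no further verification of those properties is needed for this counting lemma.
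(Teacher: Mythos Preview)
Your argument is exactly the paper's: use $e(G)=e(G_1)+e(G_2)$, substitute the two hypotheses, and then distinguish the cases $n=n_1+n_2$ versus $n=n_1+n_2-1$. The only issue is a labeling slip. In the paper's definitions, operation~(2) is the \emph{disjoint union} (so $n=n_1+n_2$) and operation~(3) is \emph{gluing at a vertex} (so $n=n_1+n_2-1$); you have these reversed. The paper's own proof of this claim makes the same swap (it writes ``If (3) applies, then $n_1+n_2=n$''), so the statement of the ``moreover'' clause appears to be a typo inherited from the paper rather than a mathematical error on your part. With the labels corrected, your argument goes through unchanged.
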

\begin{proof}
We have 
\begin{align*}
e(G) &= e(G_1) + e(G_2) \\
&\le \frac{18n_1}{7} - c_1 + \frac{18n_2}{7} - c_2 \\
&\le  \frac{18(n + 1)}{7} - c_1 - c_2 \\
& = \frac{18n}{7} - c_1 - c_2 + \frac{18}{7}.
\end{align*}
If (3) applies, then $n_1 + n_2 = n$ and so $e(G) \le \frac{18n}{7} - c_1 - c_2$.
\end{proof}

Let $\cH_0$ be the class of planar graphs with at least 3 and at most 6 vertices, and for each positive integer $i$, let $\cH_i$ be the union of $\cH_{i-1}$ and the class of $C_7$-free planar graphs obtained from applying any of operations (1) - (3) to graphs in $\cH_{i-1}$.
Let $\cH=\bigcup_{i\ge 0}\cH_i$. 
Since Claims \ref{claim:(1)-(4)} - \ref{claim:(5) or (6)} show that operations (1) - (3) only decrease density, it follows from Theorem \ref{main good} that every $n$-vertex planar graph $G$ with $n \ge 7$ in $\cG \setminus \cH$ satisfies $e(G) \le \frac{18n}{7} - \frac{48}{7}$.
We now inductively bound the number of edges of graphs in $\cH$.

\begin{claim} \label{using small graphs}
Let $k$ be non-negative integer, and let $G$ be a $C_7$-free $n$-vertex planar graph in $\cH$ with $n\ge 3+2k$. Then either 
\begin{enumerate}[$(a)$]
\item $e(G) \le \frac{18n}{7} - \frac{30}{7} - \frac{k}{7}$, or

\item $G$ is connected and has at most two blocks (maximal connected subgraph without a cut vertex), each of which is a triangulation on five or six vertices.
\end{enumerate}
\end{claim}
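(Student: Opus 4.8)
The plan is to induct on the smallest index $i$ for which $G \in \cH_i$. In the base case $i=0$, the graph $G$ has between $3$ and $6$ vertices, so $k=0$ or $k$ is irrelevant; here a direct check using $e(G)\le 3n-6$ (planarity) establishes $(a)$ except for the maximal triangulations on $5$ and $6$ vertices, where $(b)$ holds. For the inductive step, suppose $G\in\cH_i\setminus\cH_{i-1}$ is obtained by one of operations (1)--(3) from graphs in $\cH_{i-1}$, and assume the claim holds for all graphs in $\cH_{i-1}$ (with the appropriate parameter $k$). I would treat the three operations separately, in each case choosing the right value of $k$ for the smaller graph(s) so that the arithmetic closes.

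First I would handle operation (3) (gluing at a vertex): if $G$ is obtained by gluing $G_1,G_2\in\cH_{i-1}$ at a vertex, then $n=n_1+n_2-1$, and since $n_1,n_2\ge 3$ we can write $n_j\ge 3+2k_j$ with $k_1+k_2\ge k-\tfrac12$ roughly (I would make this precise by a careful splitting of the surplus $n-3-2k$ between the two pieces). If either $G_j$ satisfies $(a)$, then Claim~\ref{claim:(5) or (6)} combined with the constants gives $e(G)\le \tfrac{18n}{7}-\tfrac{30}{7}-\tfrac{k}{7}$ directly, because gluing loses nothing in the $\tfrac{18}{7}$-coefficient. If both $G_1$ and $G_2$ fall into case $(b)$ — i.e.\ each is (a triangulation on $5$--$6$ vertices, or a graph with two such blocks) — then $G$ again has at most two blocks only when one of $G_1,G_2$ is a single such triangulation and we are gluing onto a vertex of the other; otherwise $G$ has at least three blocks and I must instead verify $(a)$ by hand from $e(G_j)\le 3n_j-6$, $n_j\le 6$. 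The bookkeeping of which gluings preserve $(b)$ and which force $(a)$ is the fussiest part here, but it is finite.

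Next, operation (2) (disjoint union): $n=n_1+n_2$, $e(G)=e(G_1)+e(G_2)$, and since $G$ is disconnected it cannot satisfy $(b)$, so I must prove $(a)$. Using the inductive bounds for $G_1,G_2$ with parameters $k_1,k_2$ chosen so that $3+2k_1 \le n_1$, $3+2k_2\le n_2$ and $k_1+k_2$ is as large as possible, and noting that a disjoint union of two graphs each of order $\ge 3$ has order $\ge 6$ so the constants have slack, I would show $e(G)\le \tfrac{18n}{7}-\tfrac{30}{7}-\tfrac{k}{7}$; here one uses that if some $G_j$ is in case $(b)$ it has only $5$ or $6$ vertices and at most $9$ or $12$ edges, which is well below the required bound. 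Finally, operation (1) (adding a small sparse set $S$): by Claim~\ref{claim:(1)-(4)}, if $G_1$ satisfies $e(G_1)\le\tfrac{18n_1}{7}-\tfrac{30}{7}-\tfrac{k_1}{7}$ then $e(G)\le \tfrac{18n}{7}-\tfrac{30}{7}-\tfrac{k_1}{7}-\big(\tfrac{18|S|}{7}-\lfloor\tfrac{18|S|}{7}\rfloor\big)$, and the savings $\tfrac17$ (for $|S|\in[2]$) or $\tfrac27$ (for $|S|\in\{3,4\}$) exactly compensate the increase $2|S|$ that $k$ is allowed to absorb (since $n$ grows by $|S|$, we may take $k=k_1+2|S|$ roughly). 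If instead $G_1$ is in case $(b)$, then adding a nonempty sparse set destroys the two-block structure, so I must again check $(a)$ directly — but since $G_1$ then has at most $6$ vertices, $G$ has at most $10$ vertices and a bounded edge count, and the inequality is immediate.

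I expect the main obstacle to be the operation-(3) analysis: correctly distributing the "surplus" $n-3-2k$ among the two glued pieces so that the per-piece inductive hypotheses are applicable, and carefully enumerating exactly which gluings of case-$(b)$ graphs (triangulations on $5$ or $6$ vertices) can again be of case-$(b)$ type versus which are forced into case $(a)$; this is where the constant $\tfrac{30}{7}$ and the $\tfrac{k}{7}$ term were calibrated, and it requires checking a handful of small configurations rather than a single clean computation.
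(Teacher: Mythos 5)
Your overall skeleton matches the paper's: induct (the paper inducts on $n$, you on the construction index, which is equivalent here), settle $\cH_0$ by hand, and then run through operations (1)--(3) using Claims \ref{claim:(1)-(4)} and \ref{claim:(5) or (6)}, with the delicate cases being those where an input graph falls into outcome $(b)$. The operation (2)/(3) bookkeeping you describe is essentially the paper's.

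There is, however, a genuine gap in your treatment of operation (1) applied to a graph $G_1$ satisfying $(b)$. First, your premise that such a $G_1$ ``has at most $6$ vertices'' is wrong: outcome $(b)$ permits two blocks, so $G_1$ can have up to $11$ vertices. More seriously, the inequality is \emph{not} ``immediate'' from a crude count. Take $G_1$ a $6$-vertex triangulation ($12$ edges) and $S=\{v\}$ with $\lfloor 18/7\rfloor=2$ incident edges: then $n=7$, $e(G)=14$, while $(a)$ with $k=2$ demands $e(G)\le \frac{18\cdot 7-30-2}{7}=\frac{94}{7}<14$, and $(b)$ fails. So a purely arithmetic argument cannot close this case; you must use $C_7$-freeness to show such attachments are impossible or severely restricted. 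This is exactly what the paper does: it first normalizes $S$ to induce a connected graph, shows that if $|S|>1$ then $S$ has at most one neighbour in $G_1$ (else a $C_7$ arises through a triangulation block), deduces via the ``not obtainable by (2) or (3)'' minimality that $|S|=2$ with no neighbours, and in the $|S|=1$ case shows that two neighbours force both to lie in a $5$-vertex triangulation block, which in turn forces $G_1$ to have two blocks and $n\in\{10,11\}$ with explicit edge counts. Your plan needs this structural $C_7$-freeness argument; without it the induction does not close. (A smaller slip: the compensation in operation (1) is between the savings $\tfrac17$ or $\tfrac27$ and an increase of $k$ by roughly $|S|/2$, not $2|S|$.)
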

\begin{proof}
Note that $n \ge 3$.
We apply induction on $n$. First, suppose $n\le 6$. 
Then $k=0$ or $k = 1$, and $G\in \cH_0$. 
If $e(G)\le 3n-7$ then $n\le 6$ implies $e(G)\le 3n-7\le \frac{18n}{7}-\frac{31}{7}$ and $(a)$ holds. So $e(G)=3n-6$ and, hence, $G$ is a triangulation.
If $n \le 4$ then $k = 0$ and $e(G) \le \frac{18n}{7} - \frac{30}{7}$.
If $n \in \{5,6\}$ then $(b)$ holds (with one block).  

So we may assume that $n \ge 7$. Then $G\notin \cH_0$, and thus, either $G$ is obtained from a graph $G_1 \in \cH$ by applying a single operation (1), or $G$ is obtained by applying a single operation (2) or (3) to graphs $G_1$ and $G_2$ in $\cH$.

First suppose that $G$ is obtained from an $n_1$-vertex graph $G_1 \in \cH$ by applying a single operation (1), and that $G$ cannot be obtained from applying operation (2) or (3) to graphs in $\cH$. 
Since $n \ge 7$ it follows that $n_1 \ge 3$, so by induction hypothesis, either $(a)$ or $(b)$ holds for $G_1$.
Suppose $(a)$ holds for $G_1$.  Thus, since $n_1\ge 3+2(k-|S|/2)$, $e(G_1) \le \frac{18n_1}{7} - \frac{30}{7} - \frac{(k-|S|/2)}{7}$. Hence, by Claim \ref{claim:(1)-(4)}, $(a)$ holds for $G$, as operation (1) gives a saving of at least $1/7$ if $|S|\in [2]$ or $2/7$ if $|S|\in \{3,4\}$. 
So assume $(b)$ holds for $G_1$.
It is a straightforward calculation to check that $e(G_1) \le \frac{18n_1}{7} - \frac{24}{7}$ (with equality when $G_1$ is a 6-vertex triangulation).
We may assume that the (18/7)-sparse set $S$ induces a connected graph, or else we may replace $S$ with one of its subsets.
We consider two cases.
If $|S| > 1$, then $S$ has at most one neighbor in $G_1$, since $G$ is $C_7$-free and $|S| \le 4$.
Since $|S| > 1$ and $G$ cannot be obtained from applying operation (2) or (3) to graphs in $\cH$, it follows that $|S| = 2$ and $S$ has no neighbor in $G_1$.
Let $G_2$ be the graph induced by $S$, which is just a single edge.
Then $e(G_2) = \frac{18n(G_2)}{7} - \frac{29}{7}$, and $G$ is obtained by applying operation (3) to $G_1$ and $G_2$.
So $n \le 13$ and $k \le 5$, and since Claim \ref{claim:(5) or (6)} gives a savings of $29/7$ it follows that $(a)$ holds for $G$.

So we may assume that $|S| = 1$, which implies that $S$ has at most two neighbors in $G_1$.
Suppose $S$ has one neighbor in $G_1$.
Then $n \le 12$ so $k \le 4$, and Claim \ref{claim:(5) or (6)} with $G_2$ as a single edge gives a savings of $11/7$, so $e(G) \le \frac{18 n}{7} - \frac{35}{7}$ and $(a)$ holds for $G$.
So we may assume that $S$ has two neighbors in $G_1$
Then since $G$ is $C_7$-free, both neighbors of $S$ are in a block of $G_1$ that is a $5$-vertex triangulation.
Since $n \ge 7$, this implies that $G_1$ has two blocks, one of which is a $5$-vertex triangulation.
Then either $n = 10$ and $e(G) = 20$ (if both blocks of $G_1$ have 5 vertices) or $n = 11$ and $e(G) = 23$ (if one block of $G_1$ has 6 vertices).
In the former case, $k \le 3$ and $e(G) \le \frac{18 n}{7} - \frac{40}{7}$, and in the latter case $k \le 4$ and $e(G) \le \frac{18 n}{7} - \frac{37}{7}$, so in either case $(a)$ holds for $G$.

Next suppose that $G$ is obtained by applying a single operation (2) or (3) to graphs $G_1$ and $G_2$ in $\cH$. For each $i\in [2]$, let $n_i=|G_i|$ and let $k_i$ be a non-negative integer such that $n_i\ge 3+2k_i$. Note that we may choose $k_1,k_2$ such that  $k_1+k_2\in 
\{k-1,k\}$. Also note that $n\ge n_1 + n_2 - 1 \ge 3+2(k_1+k_2+1)$.

First suppose that outcome $(a)$ holds for both $G_1$ and $G_2$; so for $i=1,2$, 
$e(G_i) \le \frac{18n_i}{7} - \frac{30}{7} - \frac{k_i}{7}$. 
Then Claim \ref{claim:(5) or (6)} with $c_i = \frac{30 + k_i}{7}$ for $i = 1,2$ shows that $(a)$ holds for $G$ as $k \in \{k_1 + k_2, k_1 + k_2+1\}$. 

Now suppose $(a)$ holds for $G_1$ and $(b)$ holds for $G_2$. Then Claim \ref{claim:(5) or (6)} shows that $(a)$ holds for $G$ with $k = k_1 + 2$ if $G_2$ has one block and $k = k_1 + 4$ if $G_2$ has two blocks.

So we may assume that $(b)$ holds for both $G_1$ and $G_2$. 
%Then $G$ is either the disjoint union of two triangulations with five or six vertices, or $G$ is connected with three blocks, 
Suppose $(b)$ does not hold for $G$.
Then $G$ consists of 3 or 4 blocks, each of which is a triangulation with five or six vertices.
If $G$ has 4 blocks, then $n \le 24$ and so $k \le 10$.
A triangulation $B$ with 5 or 6 vertices satisfies $e(B) \le \frac{18n(B)}{7} - \frac{24}{7}$, and then applying Claim \ref{claim:(5) or (6)} three times gives a savings of at least $18/7$.
So $e(G) \le \frac{18n}{7} - \frac{42}{7}$, and since $k \le 10$ it follows that $(a)$ holds for $G$.
If $G$ is connected and has 3 blocks, then $n \le 16$ and so $k \le 6$.
Applying Claim \ref{claim:(5) or (6)} twice gives a savings of at least $12/7$, so $e(G) \le \frac{18n}{7} - \frac{36}{7}$, and $(a)$ holds for $G$ since $k \le 6$.
So $G$ is disconnected and has 3 blocks, so $n \le 18$ and $k \le 7$.
Since Claim \ref{claim:(5) or (6)} gives a savings of at least $24/7$ when operation (3) is applied, $e(G) \le \frac{18n}{7} - \frac{48}{7}$, and $(a)$ holds for $G$ since $k \le 7$.
\end{proof}

Let $G$ be an $n$-vertex $C_7$-free planar graph with $n > 38$.
If $G \in \cG \setminus \cH$ then $e(G) \le \frac{18n}{7} - \frac{48}{7}$.
If $G \in \cH$, then Claim \ref{using small graphs} applies with $k = 18$ and outcome $(a)$ holds, so $e(G) \le \frac{18n}{7} - \frac{48}{7}$, as desired.
\end{proof}

\section{Future Work} \label{sec:future work}

As discussed in Section \ref{sec:introduction}, the conjecture of Ghosh et al. that 
$\ex_{\cP}(n, C_{\ell}) \le \frac{3(\ell - 1)}{\ell}n - \frac{6(\ell + 1)}{\ell}$ for all $\ell \ge 7$ and all sufficiently large $n$ is false for all $\ell \ge 11$, but true for $\ell = 7$ by Theorem \ref{main}.
Is this conjecture true for $\ell \in \{8,9,10\}$?
We believe that the answer is yes for $\ell = 8$, and that this can be proven using existing techniques and extensive case work.
However, we believe that for $\ell = 9$, $\ex_{\cP}(n, C_{\ell}) > \frac{3(\ell - 1)}{\ell}n - \frac{6(\ell + 1)}{\ell}$ for infinitely many $n$, and we plan to prove this via a construction in a forthcoming paper.
For $\ell = 10$ the correct answer is unclear.

We mention one other direction that could be approached using techniques from this paper.
For each $\ell \ge 4$, let $\Theta_{\ell}$ denote the family of graphs (call \emph{theta graphs}) obtained by adding an edge between two non-adjacent vertices of $C_{\ell}$, and write $\ex_{\cP}(n, \Theta_{\ell})$ for the maximum number of edges in an $n$-vertex planar graph that contains no graph in $\Theta_{\ell}$ as a subgraph.
Sharp upper bounds for $\ex_{\cP}(n, \Theta_{\ell})$ are known for $\ell \in \{4,5,6\}$ by results of Lan, Shi and Song \cite{Theta-free} and Ghosh, Gy\H ori, Paulos, Xiao, and Zamora \cite{GGMPX6theta}, but no sharp upper bound is known for $\ell \ge 7$.
Due to the importance of triangular-blocks with a chord in our main proof, we believe that our techniques could make progress towards a sharp upper bound on $\ex_{\cP}(n, \Theta_{7})$.

\bibliographystyle{plain}
\bibliography{references.bib}

\end{document}